\newtheorem*{maintheorem}{Main Theorem}
\newtheorem*{seifertcorollary}{Corollary~\ref{coro:Seifert}}
\newtheorem*{Smaletheorem}{Theorem~\ref{thm:Smale}}
\newtheorem{introtheorem}{Theorem}
\newtheorem{maincorollary}{Corollary}
\newtheorem{theorem}{Theorem}[section]
\newtheorem{cor}[theorem]{Corollary}
\newtheorem{lemma}[theorem]{Lemma}
\newtheorem{prop}[theorem]{Proposition}
\theoremstyle{definition}
\numberwithin{figure}{section}
\numberwithin{equation}{section}
\newcommand{\Diff}{\operatorname{Diff}}
\newcommand{\diff}{\operatorname{diff}}
\newcommand{\Emb}{\operatorname{Emb}}
\newcommand{\emb}{\operatorname{emb}}
\renewcommand{\H}{\mathbb{H}}
\newcommand{\isom}{\operatorname{isom}}
\newcommand{\Isom}{\operatorname{Isom}}
\newcommand{\Mod}{\operatorname{Mod}}
\newcommand{\N}{\mathbb{N}}
\newcommand{\Out}{\operatorname{Out}}
\newcommand{\R}{\mathbb{R}}
\newcommand{\SF}{\operatorname{SF}}
\newcommand{\SO}{\operatorname{SO}}
\newcommand{\longpage}{\enlargethispage{\baselineskip}}
\newcommand{\shortpage}{\enlargethispage{-\baselineskip}}
\title[The Smale Conjecture for Seifert fibered spaces]{The Smale
conjecture for Seifert fibered\\ spaces with hyperbolic base orbifold}
\subjclass[2000]{Primary 57M99; Secondary 57M50}
\keywords{3-manifold, diffeomorphism, homotopy, isotopy,
fiber-preserving, isometry, insulator, small,
non-Haken, Seifert, fibering, infranilmanifold}
\date{\today}
\author{Darryl McCullough}
\address{Department of Mathematics, University of Oklahoma, Norman, Oklahoma
73019, USA}
\email{dmccullough@math.ou.edu}
\thanks{The first author was supported in part by NSF grant DMS-08082424}
\author{Teruhiko Soma}
\address{Department of Mathematics and Information Sciences,
Tokyo Metropolitan University,
Minami-Ohsawa 1-1, Hachioji, Tokyo 192-0397, Japan}
\email{tsoma@tmu.ac.jp}
\begin{document}

\begin{abstract}
Let $M$ be a closed orientable $3$-manifold admitting an $\H^2\times\R$ or
$\widetilde{\mathrm{SL}_2}(\R)$ geometry, or equivalently a Seifert fibered
space with a hyperbolic base $2$-orbifold. Our main result is that the
connected component of the identity map in the diffeomorphism group
$\Diff(M)$ is either contractible or homotopy equivalent to~$S^1$,
according as the center of $\pi_1(M)$ is trivial or infinite cyclic. Apart
from the remaining case of non-Haken infranilmanifolds, this completes the
homeomorphism classifications of $\Diff(M)$ and of the space of Seifert
fiberings $\SF(M)$ for compact orientable aspherical $3$-manifolds. We also
prove that when $M$ has an $\H^2\times\R$ or
$\widetilde{\mathrm{SL}_2}(\R)$ geometry and the base orbifold has
underlying manifold the $2$-sphere with three cone points, the inclusion
$\Isom(M)\to \Diff(M)$ is a homotopy equivalence.
\end{abstract}

\maketitle

Let $M$ be a smooth closed manifold and $\Diff(M)$ the space of
diffeomorphisms of $M$ with the $C^\infty$-topology. The path component of
$\Diff(M)$ containing the identity $\mathrm{Id}_M$ is denoted by
$\diff(M)$. In this paper, we focus on the case when $M$ is a closed
orientable $3$-manifold admitting an $\H^2\times\R$ or
$\widetilde{\mathrm{SL}_2}(\R)$ geometry, or equivalently $M$ is a Seifert
fibered space with a hyperbolic base $2$-orbifold. Waldhausen~\cite{wa2}
and, for the non-Haken cases, Scott \cite{sc3} together with Boileau-Otal
\cite{bo} proved that for such $M$, an element $f$ of $\mathrm{Diff}(M)$
belongs to $\diff(M)$ if and only if $f$ is homotopic to $\mathrm{Id}_M$,
and consequently homotopic diffeomorphisms are isotopic. A new proof, based
on the insulator methods of Gabai~\cite{ga1}, was given by the second
author in~\cite{so}. Our main result is the following:
\begin{maintheorem}\label{t_main}
Let $M$ be a closed orientable Seifert fibered space with a hyperbolic base
$2$-orbifold. Then $\diff(M)$ is contractible or is homotopy equivalent
to~$S^1$, according as the center of $\pi_1(M)$ is trivial or infinite
cyclic.\par
\end{maintheorem}

As we will see, combined with known results the Main Theorem reduces two
longstanding conjectural pictures in the topology of compact orientable
aspherical $3$-manifolds to a single remaining case, namely that of
non-Haken infranilmanifolds. The first conjectural picture is the
homeomorphism classification of $\Diff(M)$. It is known that $\Diff(M)$ is
an infinite-dimensional separable Fr\'echet manifold, so its homeomorphism
type is determined by its homotopy type. Moreover, since $\Diff(M)$ is a
topological group, any two components are homeomorphic. Therefore the
homeomorphism type of $\Diff(M)$ is determined by the cardinality of the mapping
class group $\Mod(M)$ and the homotopy type of $\diff(M)$.

Here and throughout, we denote by $k=k(M)$ the rank of the center of
$\pi_1(M)$, which is $0$ if $M$ does not admit a Seifert fibering. When $M$
is Seifert-fibered, $k$ is $3$ if $M$ is the $3$-torus, is $1$ when $M$ is
the orientable circle bundle over the Klein bottle that admits a
cross-section, and in all other cases is $1$ or $0$ according as the base
$2$-orbifold of $M$ is orientable or not. By $(S^1)^k$, we mean the product
of $k$ copies of $S^1$, where $(S^1)^0$ means a single point.

From work of Hatcher \cite{ha1} and Ivanov \cite{I1,I2}, we know that for
Haken $3$-manifolds, possibly with nonempty boundary, $\diff(M)\simeq
(S^1)^k$ except in two cases: the solid torus, for which $\diff(M)\simeq
S^1\times S^1$, and $D^3$, for which
$\diff(M)\simeq\SO(3)$~\cite{ha2}. Apart from these exceptional cases, the
path component $\isom(M)$ of $\mathrm{Id}_M$ in the isometry group
$\Isom(M)$ is $(S^1)^k$, when one uses a metric on $M$ of maximal symmetry
(that is, one for which the Lie group $\Isom(M)$ has maximal dimension and
maximal number of components), and the homotopy equivalence $(S^1)^k\to
\diff(M)$ is simply the inclusion $\isom(M)\to \diff(M)$. For the
exceptional Haken cases, $\isom(M)\to \diff(M)$ is still a homotopy
equivalence. For hyperbolic $M$, Haken or not, Gabai~\cite{ga2} proved that
$\diff(M)$ is contractible; in this case $k=0$ and $\isom(M)$ is a point so
$\isom(M)\to \diff(M)$ is again a homotopy equivalence.

Among the closed orientable aspherical $3$-manifolds, there remain only
the non-Haken Seifert fibered cases. It is well-known that such a manifold
must have base orbifold a $2$-sphere with three cone points, and such a
Seifert fibered manifold is non-Haken if and only if its first homology
group is finite~\cite{wa1}. They have $k=1$ and (as we will check) $\isom(M)=
S^1$. There are two classes:
\begin{enumerate}
\item[1.] The non-Haken manifolds among those of the Main Theorem.
\item[2.] The non-Haken infranilmanifolds. A nilmanifold is a $3$-manifold
that is a quotient of Heisenberg space by a torsion-free lattice;
topologically these are the $S^1$-bundles over the torus with nonzero
Euler class. An infranilmanifold is a finite quotient of a
nilmanifold. Their base orbifolds have cone points of types $(2,4,4)$, 
$(2,3,6)$, or $(3,3,3)$.
\end{enumerate}
\shortpage

The homotopy equivalence $S^1\to \diff(M)$ in the Main Theorem is realized
as the inclusion $\isom(M)\to \diff(M)$, when $M$ has its standard
geometry. Therefore, combining the previous results, we have
\begin{introtheorem}
Let $M$ be a compact orientable aspherical $3$-manifold with a metric of
maximal symmetry, other than a non-Haken infranilmanifold. Then the inclusion
$\isom(M)\to \diff(M)$ is a homotopy equivalence.
\label{thm:weakSmale}
\end{introtheorem}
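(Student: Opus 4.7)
The plan is to prove Theorem~\ref{thm:weakSmale} by case analysis, combining the prior results cited in the introduction with the Main Theorem. A compact orientable aspherical $3$-manifold with a metric of maximal symmetry falls into one of three (overlapping) families: (i) Haken manifolds, including the exceptional $D^3$ and solid torus cases; (ii) closed hyperbolic manifolds; and (iii) closed non-Haken Seifert fibered manifolds with base orbifold a $2$-sphere with three cone points. For (i), the work of Hatcher and Ivanov gives $\diff(M)\simeq(S^1)^k$ (or $\SO(3)$ for $D^3$), realized by the inclusion from $\isom(M)$ for a metric of maximal symmetry. For (ii), Gabai's theorem gives $\diff(M)$ contractible, matching $\isom(M)$ being a point since $k=0$.

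Case (iii) subdivides into the non-Haken infranilmanifolds, excluded by hypothesis, and the non-Haken manifolds of the Main Theorem, whose base orbifold is hyperbolic. For the latter the base is orientable, so $k=1$, and the Main Theorem gives $\diff(M)\simeq S^1$. Two identifications remain. First, that $\isom(M)=S^1$ for the standard $\H^2\times\R$ or $\widetilde{\mathrm{SL}_2}(\R)$ metric: the Seifert fibration is metrically preserved, so the circle of fiber translations lies in $\isom(M)$, and any further continuous symmetry would project to a one-parameter family of self-isometries of the base orbifold, contradicting the fact that a hyperbolic $2$-sphere with three cone points has only a finite isometry group (its orbifold fundamental group is a cocompact triangle group in $\mathrm{PSL}_2(\R)$, whose normalizer is finite). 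Second, that the inclusion $\isom(M)\to\diff(M)$ realizes the homotopy equivalence from the Main Theorem.

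The main obstacle is this second identification. I expect it to be produced directly by the proof of the Main Theorem, which should build the equivalence $\diff(M)\simeq S^1$ from the fiberwise $S^1$-action, so that the identification with the inclusion is built in. As an independent check, one can evaluate at a basepoint $x_0$ on a regular fiber: the composition $\isom(M)\to\diff(M)\to M$, $f\mapsto f(x_0)$, sends the generator of $\pi_1(\isom(M))$ to the regular fiber, a generator of the center of $\pi_1(M)$. Combined with the standard fact that the evaluation $\pi_1(\diff(M))\to\pi_1(M)$ lands in the center for aspherical $M$, this forces the inclusion-induced map $\mathbb{Z}=\pi_1(\isom(M))\to\pi_1(\diff(M))=\mathbb{Z}$ to be $\pm 1$, hence a homotopy equivalence.
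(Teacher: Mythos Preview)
Your proposal is correct and follows essentially the same route as the paper: the theorem is deduced in the introduction by exactly this case split (Haken via Hatcher--Ivanov, hyperbolic via Gabai, non-Haken hyperbolic-base via the Main Theorem), with the identifications $\isom(M)=S^1$ and ``inclusion realizes the equivalence'' handled just as you anticipate---the former is verified later via Lemma~\ref{lem:vertical_isometries} and Proposition~\ref{prop:IsomOut}, and the latter is built into the proof of the Main Theorem through the vertical $S^1$-action $\varphi$ and the trace homomorphism $\alpha$, which is precisely your ``independent check.''
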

\noindent Since any two infinite-dimensional separable Fr\'echet spaces are
homeomorphic, we have as a corollary to Theorem~\ref{thm:weakSmale} the
homeomorphism classification of $\Diff(M)$ in the compact orientable
aspherical case:
\begin{maincorollary}
Let $M$ be a compact orientable aspherical $3$-manifold, other than a
non-Haken infranilmanifold. Give $M$ a metric of maximal symmetry. Then
$\Diff(M)$ is homeomorphic to $\Mod(M)\times \isom(M)\times F$, where $F$
is an infinite-dimensional separable Fr\'echet space.
\end{maincorollary}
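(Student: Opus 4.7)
The plan is to reduce the corollary to Theorem~\ref{thm:weakSmale} together with the classification theorem for infinite-dimensional separable Fr\'echet manifolds. First, I would recall from the introductory discussion that $\Diff(M)$ is an infinite-dimensional separable Fr\'echet manifold. Since $\Diff(M)$ is also a topological group, left translation produces a homeomorphism between any two path components, and each component is open in $\Diff(M)$; hence as a topological space
\[
\Diff(M)\;\cong\;\Mod(M)\times \diff(M),
\]
where $\Mod(M)=\pi_0(\Diff(M))$ carries the discrete topology. The remaining task is therefore to identify $\diff(M)$ with $\isom(M)\times F$.

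Next I would invoke Theorem~\ref{thm:weakSmale} to obtain a homotopy equivalence $\isom(M)\to \diff(M)$. The product $\isom(M)\times F$ is an infinite-dimensional separable Fr\'echet manifold that, since $F$ is contractible, is homotopy equivalent to $\isom(M)$, and hence to $\diff(M)$.

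The final step is to appeal to the classification theorem of Henderson: two infinite-dimensional separable Fr\'echet manifolds are homeomorphic if and only if they have the same homotopy type. (The ingredient that all such Fr\'echet spaces are themselves mutually homeomorphic is the Anderson--Kadec theorem, already noted just above the corollary.) This produces a homeomorphism $\diff(M)\cong \isom(M)\times F$, which combined with the earlier product decomposition yields the corollary.

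No serious obstacle arises at this stage: the deep content is already encapsulated in Theorem~\ref{thm:weakSmale}, and the remainder is formal. The only items to verify are the standard group-theoretic splitting of $\Diff(M)$ along its path components and the applicability of the Fr\'echet manifold classification, both of which are routine once $\Diff(M)$ is known to be a separable Fr\'echet manifold.
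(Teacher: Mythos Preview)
Your proposal is correct and follows essentially the same approach as the paper: the paper deduces the corollary in a single sentence from Theorem~\ref{thm:weakSmale} together with the fact that infinite-dimensional separable Fr\'echet manifolds are classified up to homeomorphism by their homotopy type, and your argument simply spells out those steps (splitting off $\Mod(M)$ via the topological group structure, then applying Henderson's theorem to identify $\diff(M)$ with $\isom(M)\times F$). There is no substantive difference between the two.
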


The homotopy equivalence in Theorem~\ref{thm:weakSmale} may be considered
to be a weak form of the original Smale Conjecture, which asserts that
$\Isom(S^3)\to \Diff(S^3)$ is a homotopy equivalence for the round
$3$-sphere. The original Smale Conjecture was proven in two stages by
J. Cerf~\cite{cerf} and A. Hatcher~\cite{ha2}. For Haken $3$-manifolds,
$\Isom(M)\to\Diff(M)$ often fails to be surjective on path components, but
for the ``small'' manifolds among those in the Main Theorem, we will obtain
the strong form of the Smale Conjecture.
\begin{Smaletheorem}
Let $M$ be a closed orientable Seifert-fibered $3$-manifold having an
$\H^2\times\R$ or $\widetilde{\mathrm{SL}_2}(\R)$ geometry, and
base orbifold a $2$-sphere with three cone
points. Then the inclusion $\Isom(M)\to \Diff(M)$ is a homotopy
equivalence.
\end{Smaletheorem}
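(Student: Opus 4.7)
The plan is to deduce the strong Smale Conjecture from the Main Theorem. By the Main Theorem together with Theorem~\ref{thm:weakSmale}, the inclusion $\isom(M)\hookrightarrow\diff(M)$ of identity components is already a homotopy equivalence. Because $\Isom(M)$ and $\Diff(M)$ are topological groups in which every path component is homeomorphic to the identity component via left translation, the inclusion $\Isom(M)\hookrightarrow\Diff(M)$ is a homotopy equivalence if and only if the induced map
\[
\pi_0(\Isom(M))\longrightarrow \pi_0(\Diff(M))=\Mod(M)
\]
is bijective, so I focus on proving this bijection.

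For surjectivity, I appeal to the uniqueness up to isotopy of the Seifert fibering on $M$, which holds because the base orbifold $\mathcal{O}=S^2(p,q,r)$ is hyperbolic with only three cone points. Every $f\in\Diff(M)$ is then isotopic to a fiber-preserving diffeomorphism and descends to $\bar f\in\Diff(\mathcal O)$. The orbifold mapping class group $\Mod(\mathcal{O})$ is the finite group of permutations of cone points of equal multiplicity, and every class is realized by an isometry $\bar g$ of $\mathcal{O}$ in its hyperbolic metric. Using the compatibility of the $\H^2\times\R$ or $\widetilde{\mathrm{SL}_2}(\R)$ structure on $M$ with the Seifert fibering, each such $\bar g$ lifts to an isometry $g\in\Isom(M)$. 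Replacing $f$ by $g^{-1}f$, I may assume $\bar f$ is isotopic to $\mathrm{Id}_{\mathcal{O}}$, so $f$ is isotopic to a diffeomorphism preserving each fiber setwise. A deformation argument modeled on the standard one for $S^1$-bundle diffeomorphism groups then isotopes $f$ into the $S^1$-action $\isom(M)$.

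For injectivity, let $g\in\Isom(M)$ be isotopic to the identity in $\Diff(M)$. Then $g$ preserves orientation, is fiber-preserving, and covers an isometry $\bar g\in\Isom(\mathcal{O})$. By uniqueness of the Seifert fibering, the isotopy from $g$ to $\mathrm{Id}_M$ may be chosen fiber-preserving, so $\bar g$ is isotopic to $\mathrm{Id}_{\mathcal{O}}$ in $\Diff(\mathcal{O})$; by the rigidity of isometries of hyperbolic orbifolds, $\bar g=\mathrm{Id}_{\mathcal{O}}$. Hence $g$ lies in the kernel of $\Isom(M)\to\Isom(\mathcal{O})$, which on its orientation-preserving part is exactly $\isom(M)=S^1$.

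The principal obstacle is the surjectivity step, specifically the lifting of each isometry of $\mathcal{O}$ to an isometry of $M$. This requires verifying that the Seifert invariants of the three exceptional fibers are symmetric under the permutations of cone points realized by $\Mod(\mathcal{O})$, which must be carried out by a case analysis on the triple $(p,q,r)$. A related difficulty is the deformation of the space of fiberwise diffeomorphisms covering $\mathrm{Id}_{\mathcal{O}}$ onto $\isom(M)$: this relies on the analogue for orbifold $S^1$-bundles of the fact that sections of the vertical bundle form a space whose identity component is homotopy equivalent to $S^1$, with other components detected by a pairing with the Euler class that is incompatible with orientation-preserving isotopy to the identity.
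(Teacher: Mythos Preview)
Your overall strategy is correct and is essentially the paper's: once the Main Theorem gives $\isom(M)\simeq\diff(M)$, the full Smale statement reduces to the bijection $\pi_0(\Isom(M))\to\pi_0(\Diff(M))$, which the paper packages as Proposition~\ref{prop:threearrows}. The paper organizes the proof via the factorization $\Isom(M)\to\Diff_f(M)\to\Diff(M)$ and a commuting diagram of fibrations over $\Diff^{\,\mathrm{orb}}(O)$, but the content is the same $\pi_0$ computation you outline.

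The real difference is in your surjectivity step, where you try to lift isometries of $O$ to isometries of $M$ and correctly flag this as the principal obstacle. The paper avoids your proposed case analysis entirely: Proposition~\ref{prop:realize} observes that the Teichm\"uller space of $\H^2\times\R$ or $\widetilde{\mathrm{SL}_2}(\R)$ structures on $M$ is either a point or is parameterized by $\log$ of the regular fiber length, and since any diffeomorphism preserves this length it fixes the point of $\mathcal T(M)$ and is therefore isotopic to an isometry. This one-stroke argument replaces your lift-from-$O$ program. Two smaller points: your description of $\Mod(\mathcal O)$ omits the orientation-reversing reflection, which does arise from a fiber-direction-reversing isometry of $M$ (Lemma~\ref{lem:isometries}(i)); and your second obstacle about vertical diffeomorphisms is handled without any Euler-class pairing by the elementary Lemmas~\ref{lem:diffv} and~\ref{lem:diffv3cone}, using a straight-line homotopy on the cover $F\times\R$ together with the fact (Corollary~\ref{cor:threecone_op}) that no diffeomorphism of $M$ reverses orientation.
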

\noindent The same statement was proven for closed hyperbolic $3$-manifolds
by Gabai \cite{ga2}. It is known for some elliptic $3$-manifolds but not
others, see~\cite{HKMR}.

The second conjectural picture affected by the Main Theorem concerns the
space of Seifert fiberings $\SF(M)$, defined in Section~\ref{sec:Seifert}.
It is also a separable infinite-dimensional Fr\'echet manifold.
For Haken $3$-manifolds, possibly with boundary, Theorem~3.8.2
of~\cite{HKMR} is
\begin{introtheorem}\label{thm:haken_sf}
Let $\Sigma$ be a Seifert-fibered Haken $3$-manifold. Then each component
of $\SF(\Sigma)$ is contractible.\par
\label{thm:HKMR}
\end{introtheorem}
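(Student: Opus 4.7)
The plan is to exhibit each path component of $\SF(\Sigma)$ as the orbit of a single fibering under the action of $\diff(\Sigma)$ by pushforward, and then show that this orbit is contractible because the isotropy subgroup has the same homotopy type as $\diff(\Sigma)$ itself. Concretely, fix a reference fibering $p_0\colon \Sigma\to B_0$ in a chosen component $\mathcal{C}\subset\SF(\Sigma)$ and consider the orbit map $\mu\colon\diff(\Sigma)\to\mathcal{C}$, $\phi\mapsto \phi_*p_0$. A Palais--Cerf style slice argument (using local triviality of the diffeomorphism-group action on smooth maps into a fixed target category, in this case Seifert fiberings) shows that $\mu$ is a locally trivial principal bundle onto its image, whose fiber is the subgroup $\diff_f(\Sigma,p_0)\subset\diff(\Sigma)$ of fiber-preserving diffeomorphisms isotopic to $\mathrm{Id}_\Sigma$.

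Next I would show that $\mu$ is surjective onto $\mathcal{C}$: given a path of Seifert fiberings from $p_0$ to some $p_1$, there is an ambient isotopy of $\Sigma$ carrying $p_0$ to $p_1$. This is a parametrized strengthening of Waldhausen's uniqueness theorem for Seifert fiberings of Haken $3$-manifolds. The strategy is to choose a family of vertical incompressible surfaces depending continuously on the fibering in the path (the preimage under each fibering of a suitable collection of curves in its base $2$-orbifold) and then apply Hatcher's parametrized isotopy theorem for incompressible surfaces in Haken manifolds, thereby straightening the family to one of the form $\phi_t{}_*p_0$.

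To compute the fiber, I would use the base-projection fibration $\diff_f(\Sigma,p_0)\to\diff_{\mathrm{orb}}(B_0)_0$, whose homotopy type is well-understood since $B_0$ is a compact $2$-orbifold, with fiber the space of vertical automorphisms, controlled by the $S^1$-action along the fibers of $p_0$. Comparing with the Hatcher--Ivanov computation $\diff(\Sigma)\simeq(S^1)^k$ recalled in the introduction, I would verify case by case (according as $B_0$ is hyperbolic, Euclidean, or small) that the natural inclusion $\diff_f(\Sigma,p_0)\hookrightarrow\diff(\Sigma)$ is a weak homotopy equivalence. The long exact sequence of $\mu$ then forces $\mathcal{C}$ to be weakly contractible, hence contractible, since $\SF(\Sigma)$ is a separable Fr\'echet manifold.

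The main obstacle is the parametrized Waldhausen theorem in the second step. The difficulty is that a continuous family of Seifert fiberings need not share a common vertical incompressible surface, so one must produce such surfaces in a manner that depends continuously (up to isotopy) on the fibering and then chain local parametrized isotopies together coherently. In cases where the base orbifold has too few essential curves to supply a vertical incompressible surface directly -- for instance when $\Sigma$ is a small Seifert manifold with boundary -- separate arguments are needed, along the lines of the $I$-bundle and solid-torus treatments that appear in the Haken portion of the Main Theorem.
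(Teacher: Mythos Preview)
The paper does not give its own proof of this statement; it is quoted as Theorem~3.8.2 of \cite{HKMR}. That said, the paper \emph{does} prove the analogous non-Haken statement (Corollary~\ref{coro:Seifert}), and the mechanism there is exactly the structure you set up: the quotient map $\Diff(M)\to\Diff(M)/\Diff_f(M)$ is a fibration (Proposition~3.6.11 of \cite{HKMR}), and one shows that the fiber inclusion $\Diff_f(M)\to\Diff(M)$ is a homotopy equivalence (Theorem~\ref{thm:fiber-preserving}, whose Haken case is Theorem~3.8.1 of \cite{HKMR}). So your outline is correct and matches the intended argument.

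You have, however, miscalibrated where the difficulty lies. Your step~2 is unnecessary: once step~1 gives the bundle (equivalently, fibration) structure, surjectivity of $\mu$ onto $\mathcal{C}$ is automatic, since $\diff(\Sigma)$ is open and closed in $\Diff(\Sigma)$ and hence its image under the open quotient map is an open-and-closed connected subset of $\SF(\Sigma)$, i.e.\ a single path component. No parametrized Waldhausen theorem is needed, and the ``main obstacle'' you name at the end is a red herring.

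The genuine content is entirely in your step~3: proving that $\diff_f(\Sigma,p_0)\hookrightarrow\diff(\Sigma)$ is a weak equivalence. This is the Haken case of Theorem~\ref{thm:fiber-preserving}, and it is where the Hatcher--Ivanov machinery enters, together with the analysis of $\diff_f$ via the fibration $\Diff_v(\Sigma)\to\Diff_f(\Sigma)\to\Diff^{\,\mathrm{orb}}(B_0)$ (Theorem~3.6.3 of \cite{HKMR}). Note also that your fiber is by definition $\diff(\Sigma)\cap\Diff_f(\Sigma)$, which a priori could be disconnected; identifying it with the identity component $\diff_f(\Sigma)$ is itself part of what must be proved (injectivity of $\Diff_f\to\Diff$ on $\pi_0$). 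Your sketch would be stronger if you relocated the ``main obstacle'' discussion to this step.
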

\noindent Problem 3.47(A3) of the Kirby Problem List~\cite{kirby} is the
conjecture that if $M$ has either the $\H^2\times\R$ or
$\widetilde{\mathrm{SL}_2}(\R)$ geometry, then $\SF(M)$ is contractible.
We will prove that in Section~\ref{sec:Seifert}:
\begin{seifertcorollary}
Let $M$ be a closed orientable Seifert-fibered $3$-manifold 
with a hyperbolic base orbifold. Then $\SF(M)$ is contractible.
\end{seifertcorollary}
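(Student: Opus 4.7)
The strategy is to deduce contractibility of $\SF(M)$ from the Main Theorem by comparing $\diff(M)$ with its fiber-preserving analogue. Fix a basepoint fibering $p_0\in\SF(M)$. A Palais-type slice argument, parallel to the one underlying Theorem~\ref{thm:HKMR}, makes the orbit map $\phi\mapsto\phi^{*}p_0$ into a locally trivial principal bundle
\[
\Diff_f(M)\longrightarrow \Diff(M)\longrightarrow \Diff(M)\cdot p_0 \;\subseteq\; \SF(M),
\]
where $\Diff_f(M)$ denotes the subgroup of fiber-preserving diffeomorphisms. On identity components this restricts to a Serre fibration
\[
\diff_f(M)\longrightarrow \diff(M)\longrightarrow \SF(M)_0,
\]
with $\SF(M)_0$ the component of $p_0$. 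Because $\SF(M)$ is a separable Fr\'echet manifold, contractibility of each of its components follows once $\SF(M)_0$ is weakly contractible, and by the long exact sequence this reduces to showing that $\diff_f(M)\hookrightarrow \diff(M)$ is a weak equivalence.

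I would establish the weak equivalence by computing $\diff_f(M)$ via the base orbifold $\mathcal{O}$. Restriction to $\Diff(\mathcal{O})$ fits into a fibration
\[
\diff_{\mathrm{vert}}(M)\longrightarrow \diff_f(M)\longrightarrow \diff(\mathcal{O})_0,
\]
whose base is contractible since $\mathcal{O}$ is a hyperbolic $2$-orbifold. The vertical piece $\diff_{\mathrm{vert}}(M)$ consists of diffeomorphisms isotopic to $\mathrm{Id}_M$ that preserve each fiber setwise; such a diffeomorphism is recorded by a twist along each fiber, and after verifying the compatibility conditions at exceptional fibers and (in the non-orientable-base case) across orientation-reversing loops of $\mathcal{O}$, the resulting space deformation retracts onto the subgroup of global fiber rotations. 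That subgroup is the standard $S^1$-action when the base is orientable and is trivial otherwise, giving $\diff_f(M)\simeq (S^1)^k$. Since this $S^1$ lies inside $\isom(M)$ and Theorem~\ref{thm:weakSmale} identifies $\isom(M)\to \diff(M)$ as a homotopy equivalence, naturality forces $\diff_f(M)\hookrightarrow \diff(M)$ to be a weak equivalence.

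It remains to show $\SF(M)$ is path connected. By the classical isotopy uniqueness of Seifert fiberings on $M$ (Waldhausen in the Haken case, Scott together with Boileau-Otal in the non-Haken case, as recalled at the beginning of the paper), every diffeomorphism of $M$ is isotopic to one preserving $p_0$, so the orbit $\Diff(M)\cdot p_0$ already lies in $\SF(M)_0$ while also exhausting $\SF(M)$; thus $\SF(M)=\SF(M)_0$, and the corollary follows. The main technical obstacle is the analysis of $\diff_{\mathrm{vert}}(M)$ in the second paragraph, particularly the local model near each exceptional fiber and the global compatibility across orientation-reversing loops of $\mathcal{O}$; this computation parallels, and will likely reuse pieces of, the corresponding Haken analysis embedded in the proof of Theorem~\ref{thm:HKMR}.
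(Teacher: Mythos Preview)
Your strategy---reduce to showing $\Diff_f(M)\hookrightarrow\Diff(M)$ is a weak equivalence via the principal fibration over $\SF(M)$, then compute $\diff_f(M)$ by fibering over $\diff^{\,\mathrm{orb}}(\mathcal{O})$ with vertical fiber and compare with the Main Theorem---is exactly the paper's route, packaged there as Theorem~\ref{thm:fiber-preserving} (with the Haken case dispatched by citing~\cite{HKMR} rather than redone uniformly as you propose).

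There is one genuine omission. When you restrict the bundle to identity components, the fiber of $\diff(M)\to\SF(M)_0$ over $p_0$ is $\diff(M)\cap\Diff_f(M)$, not $\diff_f(M)$ a priori; these coincide only if $\Diff_f(M)\to\Diff(M)$ is \emph{injective} on $\pi_0$, i.e.\ a fiber-preserving diffeomorphism isotopic to $\mathrm{Id}_M$ is already isotopic to $\mathrm{Id}_M$ through fiber-preserving maps. Your final paragraph supplies $\pi_0$-surjectivity (every diffeomorphism is isotopic to a fiber-preserving one) but never injectivity. The paper handles this in the non-Haken case via Proposition~\ref{prop:threearrows}. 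Without it, your long exact sequence compares $\diff(M)$ to the wrong fiber, and the fact that $\diff_f(M)\simeq(S^1)^k\simeq\diff(M)$ does not by itself force the intermediate group $\diff(M)\cap\Diff_f(M)$ to share that homotopy type. The repair is straightforward---either work with the full groups throughout and prove $\pi_0$-bijectivity of $\Diff_f(M)\to\Diff(M)$, as the paper does, or insert the injectivity argument before restricting---but it is a necessary step.
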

\noindent Combining this with Theorem~\ref{thm:haken_sf} yields
\begin{maincorollary}\label{coro:SF}
Let $M$ be a compact orientable aspherical Seifert fibered space,
other than a non-Haken infranilmanifold. Then each component of $\SF(M)$ is
contractible.\par
\end{maincorollary}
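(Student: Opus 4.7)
The plan is to reduce Main Corollary \ref{coro:SF} to a short case analysis that combines Theorem \ref{thm:haken_sf} with Corollary \ref{coro:Seifert}. Let $M$ be a compact orientable aspherical Seifert fibered space, other than a non-Haken infranilmanifold. If $M$ is Haken, then Theorem \ref{thm:haken_sf} immediately delivers the desired contractibility of each component of $\SF(M)$.

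If $M$ is non-Haken, I would first observe that $M$ must be closed. A compact aspherical Seifert fibered $3$-manifold with nonempty boundary has its boundary a union of fibered tori, which are incompressible outside the solid-torus case (handled directly in HKMR's framework); so $\partial M\neq\emptyset$ forces $M$ Haken. With $M$ now closed and non-Haken, the result of Waldhausen \cite{wa1} recalled in the introduction says that the base $2$-orbifold of $M$ is a sphere with three cone points of orders $(p,q,r)$ and $H_1(M)$ is finite. Asphericity of $M$ forces the orbifold Euler characteristic $1/p+1/q+1/r-1$ to be non-positive; the equality case puts $(p,q,r)$ in $\{(2,4,4),(2,3,6),(3,3,3)\}$, which are precisely the three cone-point types of the non-Haken infranilmanifolds ruled out by hypothesis. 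Strict inequality therefore holds, the base orbifold is hyperbolic, and Corollary \ref{coro:Seifert} applies to give $\SF(M)$ contractible.

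In either case, each component of $\SF(M)$ is contractible, which is the claim. There is no substantive obstacle here: once the Main Theorem and Corollary \ref{coro:Seifert} are in place, this corollary is a bookkeeping statement combining the closed hyperbolic-base case with the Haken case from HKMR, the only subtlety being the orbifold-Euler-characteristic dichotomy that isolates the excluded infranilmanifolds.
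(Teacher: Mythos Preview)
Your argument is correct and is precisely the paper's approach: the corollary is stated immediately after Corollary~\ref{coro:Seifert} with the justification ``Combining this with Theorem~\ref{thm:haken_sf} yields,'' and your case split (Haken via Theorem~\ref{thm:haken_sf}, non-Haken forces closed with hyperbolic three-cone-point base via Corollary~\ref{coro:Seifert}) is exactly that combination spelled out. The only extra content you supply is the routine dichotomy on the orbifold Euler characteristic isolating the excluded infranilmanifold types, which the paper leaves implicit.
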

\noindent Since the Seifert fiberings on compact $3$-manifolds are
completely classified, Corollary~\ref{coro:SF} gives an effective
homeomorphism classification of $\SF(M)$ for almost all compact aspherical
$3$-manifolds:
\begin{maincorollary}
Let $M$ be a compact orientable aspherical Seifert fibered space,
other than a non-Haken infranilmanifold. Then $\SF(M)$ is homeomorphic to
$E\times F$, where $E$ is the discrete set of equivalence classes of
Seifert fiberings, and $F$ is an infinite-dimensional separable Fr\'echet
space.
\end{maincorollary}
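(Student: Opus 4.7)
The plan is to derive this statement as a formal consequence of the preceding Main Corollary together with the classification of infinite-dimensional Fr\'echet manifolds. By that corollary, each path component of $\SF(M)$ is contractible; combined with the assertion made earlier in the introduction that $\SF(M)$ is a separable infinite-dimensional Fr\'echet manifold, every component is a contractible separable infinite-dimensional Fr\'echet manifold.

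By Henderson's classification theorem, any two separable infinite-dimensional Fr\'echet manifolds of the same homotopy type are homeomorphic. In particular, any contractible such manifold is homeomorphic to a fixed infinite-dimensional separable Fr\'echet space $F$ (for instance $\R^\infty$ or $\ell^2$). Applying this componentwise yields that every component of $\SF(M)$ is homeomorphic to $F$.

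Next, I would identify the set $E$ of equivalence classes of Seifert fiberings with $\pi_0(\SF(M))$: by definition two fiberings are equivalent iff they lie in the same path component of $\SF(M)$, and the known classification of Seifert fiberings on compact orientable $3$-manifolds gives this set explicitly. The discreteness of $\pi_0(\SF(M))$ is automatic from the local path-connectedness of Fr\'echet manifolds. Assembling: $\SF(M)$ is the topological disjoint union of its components, each homeomorphic to $F$ and indexed by the discrete set $E$, so $\SF(M)\cong E\times F$.

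The only nontrivial ingredient is Henderson's theorem, a standard black box from infinite-dimensional topology that is already invoked in the analogous Main Corollary describing $\Diff(M)\cong \Mod(M)\times\isom(M)\times F$. I therefore expect no genuine obstacle beyond what has already been handled in the paper; once the previous corollary is in hand, this Main Corollary is purely formal.
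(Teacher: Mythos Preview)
Your proposal is correct and takes essentially the same approach as the paper, which gives no separate proof and treats this corollary as an immediate formal consequence of Corollary~\ref{coro:SF} together with the Fr\'echet-manifold structure of $\SF(M)$ and the Henderson classification of separable infinite-dimensional Fr\'echet manifolds. The only small refinement is that the identification of $E$ with $\pi_0(\SF(M))$ is not literally ``by definition'': equivalent fiberings lie in the same coset space $\Diff(M)/\Diff_f(M)$ by definition, and that space is connected because $\Diff_f(M)\to\Diff(M)$ is bijective on path components (classical in the Haken case, and Proposition~\ref{prop:threearrows} here).
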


The methods of our paper do not adapt to infranilmanifolds, since we rely
heavily on the hyperbolicity of the base orbifold. But we know of no reason
not to expect that all of the previous results that exclude these manifolds
are actually true for them as well. Consequently, as discussed at the
beginning of Section~\ref{sec:threecone}, we have structured the
applications sections in such a way that if the Main Theorem is proven in
the infranilmanifold case, then all the results listed above will be
established in that case as well.

Section~\ref{sec:sketch} will give a brief overview of the proof of the
Main Theorem, while Sections~\ref{Least} through~\ref{Proof} of this paper
will give the details. Section~\ref{sec:Seifert}, preceded by three
sections of background results, gives the proofs of
Corollary~\ref{coro:Seifert} and Theorem~\ref{thm:Smale}.

We are grateful to K. Ohshika for helpful conversations. We also wish to
acknowledge that J. H. Rubinstein has considered a similar approach to the
Main Theorem, and we thank him for many useful discussions of this and
other problems.

\section{Sketch of the proof of the Main Theorem}\label{sec:sketch}

Palais \cite{pa} showed that $\diff(M)$ has the homotopy type of a
CW-complex, so by the Whitehead Theorem, it suffices to show that
$\pi_n(\diff(M))$ is isomorphic to $\pi_n(S^1)$ for all $n\in
\mathbb{N}$. When $M$ is Haken, the Main Theorem follows from work of
Hatcher \cite{ha1,ha2} and Ivanov \cite{I1,I2}. So we may assume that $M$
is non-Haken, in which case the base orbifold is hyperbolic with underlying
space the $2$-sphere and singular locus consisting of three points. Note
that in these cases,~$k(M)=1$.

Our proof of the Main Theorem incorporates many of the ideas of Gabai's
proof of the Smale Conjecture for closed hyperbolic 3-manifolds \cite{ga2}.
His approach draws on his rigidity theorem for hyperbolic 3-manifolds in
\cite{ga1}. In place of the latter, we will use results from \cite{so}, in
which Scott's rigidity theorem for Seifert fibered spaces \cite{sc1} was
obtained as a 2-dimensional (and hence easier) version of Gabai's rigidity
theorem.

The first step, carried out in Sections~\ref{Least} and \ref{FindTori}, is
to consider an arbitrary Riemannian metric $\nu$ on $M$ and show, using
least-area techniques from~\cite{so}, that the preimage $c^\natural$ in $M$
of a fixed cone point in the base orbifold is the core circle of a
\textit{canonical (open) solid torus.} The canonical torus depends only on
$\nu$ and has certain key limiting properties as $\nu$ is varied. Roughly
speaking, the canonical solid tori for a convergent sequence of metrics
converge to an open solid torus that contains the canonical torus for the
limit metric. These properties are developed and used in the proof of
Lemma~\ref{Cell}.

Lemma~\ref{Cell} corresponds to the Coarse Torus Isotopy Theorem of Gabai
\cite[Theorem 4.6]{ga2}. Given a continuous map $f\colon S^n\to \diff(M)$,
its output is a family of solid tori associated to the cells of a cell
decomposition of an $(n+1)$-ball $B^{n+1}$ with boundary $S^n$. These solid
tori satisfy (1)~for $y\in S^n$, $f(y)(c^\natural)$ is a core of each solid
torus associated to a cell that contains $y$, and (2)~they are nested
according to the corresponding nesting of the cells of $B^{n+1}$. The key
idea of the proof is Gabai's: push forward the standard metric of $M$ using
the diffeomorphisms of $f$ to obtain a map from $S^n$ to the contractible
space of Riemannian metrics on $M$, extend this map to $B^{n+1}$, and use
the canonical solid tori associated to these metrics to get started on
constructing the solid tori of the conclusion.

The final part of the proof, in Section~\ref{Proof}, uses the nested solid
tori from Lemma~\ref{Cell} to construct an extension of a representative
$f\colon S^n\to \diff(M)$ of an element of $\pi_n(\diff(M))$ to a map
$F\colon B^{n+1}\to \diff(M)$. Unlike the hyperbolic case, however,
$\diff(M)$ is not simply connected, indeed $\pi_1(\diff(M))\cong
\pi_1(S^1)$ is generated by a circular isotopy that moves points vertically
around the fibers. To handle $\pi_1(\diff(M))$, we utilize a maximal-tree
argument to reduce to the case when each diffeomorphism associated by $f$
to a point of $S^n$ carries $c^\natural$ into a fixed solid torus
neighborhood of $c^\natural$. Under this assumption, $f$ can be seen to be
homotopic to a well-defined element of $\pi_1(\isom(M))$.

\section{Least area annuli with bounded deviation}\label{Least}

\longpage
Throughout the remainder of this paper, all 3-manifolds are assumed to be
orientable.

Let $M$ be a closed Seifert fibered space with the Seifert fibration
$\sigma\colon M\longrightarrow O$ over a hyperbolic triangle orbifold
$O=O(p,q,r)$, where $p,q,r$ are integers with $2\leq p\leq q \leq r$ and
$1/p+1/q+1/r<1$. The cyclic subgroup $\langle\gamma\rangle$ of
$\pi_1(M)$ generated by the element $\gamma$ represented by a regular fiber
of $M$ coincides with the center $Z(\pi_1(M))$ of $\pi_1(M)$.

Let $a\colon F\longrightarrow O$ be an orbifold covering such that $F$ is a
closed hyperbolic surface and $\widehat a\colon\H^2\longrightarrow
F$ the universal covering. Consider the natural quotient epimorphism
$\varphi\colon \pi_1(M)\longrightarrow \pi_1^{\,\mathrm{orb}}(O)
=\pi_1(M)/\langle\gamma\rangle$ and the covering $p\colon X\longrightarrow M$
associated to $\varphi^{-1}(a_*(\pi_1(F)))\subset \pi_1(M)$.  The Seifert
$S^1$-fibration $\sigma$ lifts to an $S^1$-fibration $\sigma_X\colon
X\longrightarrow F$. We have also an $S^1$-fibration
$\widehat{\sigma}\colon\widehat X\longrightarrow \H^2$ and a
covering $\widehat p\colon\widehat X\longrightarrow X$ in the following
commutative diagram.
\[
\begin{CD}
\widehat X @>{\widehat \sigma}>> \H^2\\
@V{\widehat p}VV        @VV{\widehat a}V\\
X @>{\sigma_X}>> F\\
@V{p}VV        @VV{a}V\\
M @>{\sigma}>> O
\end{CD}
\] 

\bigskip

We regard $G:=\pi_1^{\,\mathrm{orb}}(O)$ as an isometric properly
discontinuous transformation group on $\H^2$, and also as the covering
transformation group on $\widehat X$ with respect to $p\circ \widehat p$.
Then, $\widehat{\sigma}$ is $G$-equivariant.

Let $\mathcal{RM}(M)$ be the space of Riemannian metrics on $M$ with
$C^\infty$-topology.  The metrics on $\widehat X$ and $X$ induced from $\nu
\in \mathcal{RM}(M)$ are also denoted by $\nu$.  Since the $\nu$-lengths of
the $S^1$-fibers $\widehat \sigma(x)^{-1}$ $(x\in \H^2)$ are uniformly
bounded, $\widehat{\sigma}$ is a quasi-isometry. In particular, the
boundary $\partial_\infty \widehat X$ of $\widehat X$ as a Gromov
hyperbolic space is naturally identified with $S_\infty^1=\partial \H^2$.

For a closed subset $J$ of $\H^2$, let $\mathcal{N}_d(J,\H^2)$ denote the
closed $d$-neighborhood $\{y\in \H^2\;\vert\;\text{dist}(y,J)\leq d\}$ of
$J$ in $\H^2$.  For any geodesic line $\alpha\in \H^2$,
$A_\alpha^\natural=\widehat \sigma^{-1}(\alpha)$ is an open annulus
properly embedded in $\widehat X$.  For $C>0$, we set $L_C(\alpha)=\widehat
\sigma^{-1}(\mathcal{N}_C(\alpha,\H^2))$, which is a closed neighborhood of
$A_\alpha^\natural$ in $\widehat X$. Note that $L_C(\alpha)$ does not
depend on the Riemannian metric $\nu$ on~$\widehat X$.

A (compact) annulus $A_0$ embedded in $\widehat X$ is
$\nu$-\emph{least area} if $A_0$ has the least area among all immersed
annuli $A_0'$ in $\widehat X$ with $\partial A_0'=\partial A_0$ with
respect to the metric $\nu$ on $\widehat X$.  An open annulus $A$ properly
embedded in $\widehat X$ is said to be a $\nu$-\emph{least area annulus
  associated to} $\alpha$ if $A$ satisfies the following conditions.
\begin{itemize}
\item
There exists $C>0$ with $A\subset L_C(\alpha)$ such that $A$ is properly
homotopic to $A_\alpha^\natural$ in $L_C(\alpha)$.  Here we say that $C$ is
a \emph{deviation} of $A$.
\item
$A$ is $\nu$-\emph{least area}.  This means that any compact
non-contractible annulus in $A$ is a $\nu$-least area annulus in
$\widehat X$.
\end{itemize}

The following lemma is a stronger version of Lemma 2.1 in \cite{so}.

\begin{lemma}\label{l_deviation}
Let $K$ be a non-empty compact subset of $\mathcal{RM}(M)$.  Then there
exists a constant $C_K>0$ such that, for any geodesic line $\alpha$ in
$\H^2$ and any $\nu\in K$, there exists a $\nu$-least area annulus
in $\widehat X$ associated to $\alpha$ with deviation $C_K$.  Moreover,
$C_K$ is a deviation of any $\nu$-least area annulus in $\widehat X$
associated to $\alpha$.
\end{lemma}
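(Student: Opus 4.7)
The plan is to deduce the lemma from Lemma~2.1 of \cite{so} (which provides, for a single metric $\nu$, both existence of a $\nu$-least area annulus associated to any geodesic and an individual deviation bound $C_\nu$) by a compactness argument that exploits the $C^\infty$-compactness of $K$ together with the cocompact $G$-action on $\H^2$. Both conclusions of the lemma will follow once the ``moreover'' assertion is established, since Lemma~2.1 of \cite{so} applied at each $\nu\in K$ then produces a $\nu$-least area annulus associated to $\alpha$ that automatically has deviation at most~$C_K$.

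I would prove the uniform upper bound by contradiction. Suppose instead that there exist $\nu_n\in K$, geodesics $\alpha_n\subset\H^2$, $\nu_n$-least area annuli $A_n\subset\widehat X$ associated to $\alpha_n$, and points $z_n\in A_n$ with $\operatorname{dist}(\widehat\sigma(z_n),\alpha_n)\to\infty$. Since each $\nu_n$ is lifted from $M$, the group $G$ acts $\nu_n$-isometrically on $\widehat X$ and descends to the cocompact action on $\H^2$. Choose $g_n\in G$ so that $\widehat\sigma(g_n z_n)$ lies in a fixed compact fundamental domain $D\subset\H^2$, and relabel $(\alpha_n,A_n,z_n)$ by their $g_n$-translates; this is legitimate because $g_n\cdot A_n$ remains a $\nu_n$-least area annulus associated to $g_n\cdot\alpha_n$ with the same deviations. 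Extract subsequences so that $\nu_n\to\nu_\infty\in K$ in $C^\infty$ and $\widehat\sigma(z_n)\to y_\infty\in D$. Since $\operatorname{dist}(y_\infty,\alpha_n)\to\infty$, the two ideal endpoints of $\alpha_n$ in $S^1_\infty=\partial\H^2$ converge to a single point $\xi_\infty$.

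Now apply standard compactness for stable minimal surfaces under $C^\infty$ variation of the ambient metric: pointwise curvature estimates combined with local area bounds (supplied by comparing with the vertical competitors $A_{\alpha_n}^\natural$ on cylinders $\widehat\sigma^{-1}(\overline{B(y_\infty,R)})$) yield, after a further subsequence, smooth convergence on compact subsets of $\widehat X$ to a properly embedded $\nu_\infty$-least area surface $A_\infty$ passing through a point over $y_\infty$. Both ends of each $A_n$ are properly homotopic inside $L_{C_n}(\alpha_n)$ to the ends of $A_{\alpha_n}^\natural$, so they limit to the single ideal circle of $\widehat X$ lying over $\xi_\infty$; consequently the ideal ends of $A_\infty$ collapse to a single point of $S^1_\infty$, contradicting the rigidity of $\nu_\infty$-least area annuli developed in Lemma~2.1 of \cite{so}, which precisely excludes least area surfaces with such degenerate ideal behavior.

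The step I expect to be the main obstacle is the final one: converting the ideal degeneration $\alpha_n\to\xi_\infty$ into an area-theoretic contradiction for $A_\infty$. This requires using the $G$-equivariant quasi-isometry $\widehat\sigma$ together with the insulator/sweepout arguments of \cite{so}, in the form of a quantitative statement that a least area surface in $\widehat X$ whose ideal ends concentrate at a single point of $S^1_\infty$ admits a strictly area-reducing deformation back to some vertical annulus. Verifying that the arguments of \cite{so} indeed yield this strengthened conclusion—and that it passes to the $C^\infty$ limit—is the delicate technical work.
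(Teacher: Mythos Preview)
Your contradiction strategy has a genuine gap at exactly the place you flag. After recentering so that $\widehat\sigma(z_n)\to y_\infty\in D$ while $\alpha_n$ escapes to a single ideal point $\xi_\infty$, you extract a local limit surface $A_\infty$ through a point over $y_\infty$. The difficulty is that nothing in Lemma~2.1 of \cite{so} rules out such a surface. That lemma produces, for a \emph{given} geodesic $\alpha$, a least-area annulus with bounded deviation; it does not classify all properly embedded least-area surfaces in $\widehat X$, nor does it show that a minimal limit whose approximating annuli have ideal ends collapsing to a point must itself be degenerate in any exploitable sense. Concretely, the limit $A_\infty$ need not be an annulus associated to any geodesic: it could a priori be a least-area surface with quite different asymptotics, and you have no competitor with which to derive a contradiction. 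Your own area-bound step is also shaky: the vertical annulus $A_{\alpha_n}^\natural$ lies over $\alpha_n$, which is far from $y_\infty$, so it provides no local area comparison near $z_n$; you would need monotonicity or some other device, and then still face the topological and asymptotic uncertainty of~$A_\infty$.

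The paper avoids this entirely by a constructive barrier argument rather than a contradiction with degenerating geodesics. It fixes a finite $G$-invariant family $\mathcal{L}$ of geodesics coming from the triangulation of the base orbifold; for those special lines the associated least-area annuli arise as lifts of least-area tori in the \emph{compact} cover $X$, where Ascoli--Arzel\`a over the compact $K$ gives a uniform deviation $C_K'$. Any geodesic $\alpha$ is then trapped in $\H^2$ by the lines of $\mathcal{L}$ disjoint from it, and the corresponding least-area annuli act as barriers in $\widehat X$: they carve out a region $V_\infty(\alpha)\subset L_{C_K}(\alpha)$ in which one both constructs a least-area annulus for $\alpha$ and confines every other one, using only disjointness of least-area surfaces. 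This sidesteps any need to analyze limits of annuli with collapsing ideal data.
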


\begin{proof}
The base orbifold of $M$ is divided by three geodesic segments
$u_1,u_2,u_3$ into two hyperbolic triangles having interior angles $\pi/p$,
$\pi/q$, $\pi/r$. Since the Fuchsian group $\pi_1(F)$ is residually finite,
we may assume that $a^{-1}(u_1\cup u_2 \cup u_3)$ is a union of simple
closed geodesics $l_1,\dots,l_n$, if necessary replacing $F$ by a suitable
finite covering space.

We will first construct least area annuli associated to geodesic lines that
project to one of the $l_i$. The preimage
$T_i^\natural=\sigma_X^{-1}(l_i)$ is an embedded incompressible torus in
$X$. By Freedman-Hass-Scott \cite{fhs}, there exists an embedded torus
$T_{i,\nu}$ in $X$ which is $\nu$-least area among all tori homotopic to
$T_i^\natural$ in $X$.  Since $K$ is compact, $s_K=\sup_{\nu\in
  K}\{\mathrm{Area}_\nu(T_i^\natural)\}<\infty$. Each component $A_{i,\nu}$
of $\widehat p^{-1}(T_{i,\nu})$ is a $\nu$-least area open annulus
associated to a component of $\widehat a^{-1}(l_i)$.

Next we obtain a uniform derivation $C_K'$ for these least-area annuli.
Since $\mathrm{Area}_\nu(T_{i,\nu})\leq s_K$ for all $\nu\in K$ and $\inf_{\nu\in
  K}\{\inf_{x\in X} \{\mathrm{inj}_\nu(x)\}\}>0$, the
Ascoli-Arzel\`{a} Theorem implies that any sequence
$\{T_{i,\nu_m}\}_{m=1}^\infty$ with $\nu_m\in K$ has a subsequence
converging uniformly to a torus in $X$ homotopic to $T_i$. This shows that
the $A_{i,\nu}$ $(\nu\in K)$ have a common deviation $C_{K,i}'$. We set
$C_K'=\max_i\{C_{K,i}'\}$.

To define $C_K$, consider any geodesic line $\alpha$ in $\H^2$ and let
$\mathcal{L}$ be the set of geodesic lines $\lambda$ in $\H^2$ with
$\widehat a(\lambda)\subset l_1\cup\cdots\cup l_n$. Denote by
$\mathcal{L}^\vee(\alpha)$ the subset of $\mathcal{L}$ consisting of the
$\lambda$ disjoint from $\alpha$. For any $\lambda\in
\mathcal{L}^\vee(\alpha)$, let $e(\lambda)$ be the component of $\H^2
\setminus \mathcal{N}_{C_K'}(\lambda)$ disjoint from $\alpha$.  As was
shown in the proof of \cite[Lemma 2.1]{so}, there exists a constant
$C_K>0$, independent of $\alpha$, with
$\mathcal{N}_{C_K}(\alpha,\H^2)\supset \H^2\setminus
\bigl(\bigcup_{\lambda\in \mathcal{L}^\vee(\alpha)}e(\lambda)\bigr)$.
Figure~\ref{f_lambda} illustrates~$C_K$.
\begin{figure}[hbtp]
\centering
\includegraphics[width=0.45\textwidth]{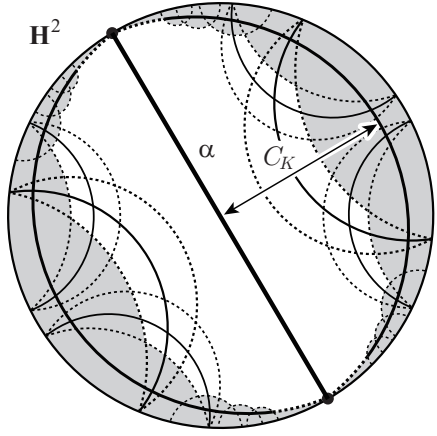}
\caption{The shaded region represents $\bigcup_{\lambda\in
\mathcal{L}^\vee(\alpha)}e(\lambda)$.}
\label{f_lambda}
\end{figure}

We are ready to construct a least-area annulus $A_\alpha$ of
deviation $C_K$ associated to $\alpha$. For any $\lambda\in
\mathcal{L}^\vee (\alpha)$, take a $\nu$-least area annulus $A_\lambda$ in
$\widehat X$ associated to $\lambda$ with deviation $C_K'$. Let
$E(\lambda)$ be the component of $\widehat X\setminus A_\lambda$
quasi-isometric to $e(\lambda)$ via $\widehat \sigma$. Let $\{J_n^+\}$,
$\{J_n^-\}$ be sequences of mutually disjoint $\nu$-least area annuli in
$\widehat X$ associated to elements of $\mathcal{L}\setminus
(\mathcal{L}^\vee(\alpha)\cup \{\alpha\})$ which converge to distinct end
points of $\alpha$ in $\partial_\infty \widehat X= S_\infty^1$ and such
that, for any $n$, the union $J_n^+\cup J_n^-$ excises from $\widehat
X\setminus \bigcup_{\lambda\in \mathcal{L}^\vee(\alpha)} E(\lambda)$ a
solid torus $V_n(\alpha)$ with $V_n(\alpha)\subset V_{n+1}(\alpha)$ and
$\widehat X\setminus \bigcup_{\lambda\in \mathcal{L}^\vee(\alpha)}
E(\lambda)=V_\infty(\alpha)$, where $V_\infty(\alpha)=\bigcup_n
V_n(\alpha)$. Since the boundary of $V_n(\alpha)$ has non-negative mean
curvature, by \cite{fhs} there exists a properly embedded $\nu$-least area
annulus $A_n$ in $V_n(\alpha)$ connecting simple non-contractible loops
$d_n^\pm$ in $J_n^\pm$, as seen in Figure~\ref{f_deviation}.
\begin{figure}[hbtp]
\centering
\includegraphics[width=0.45\textwidth]{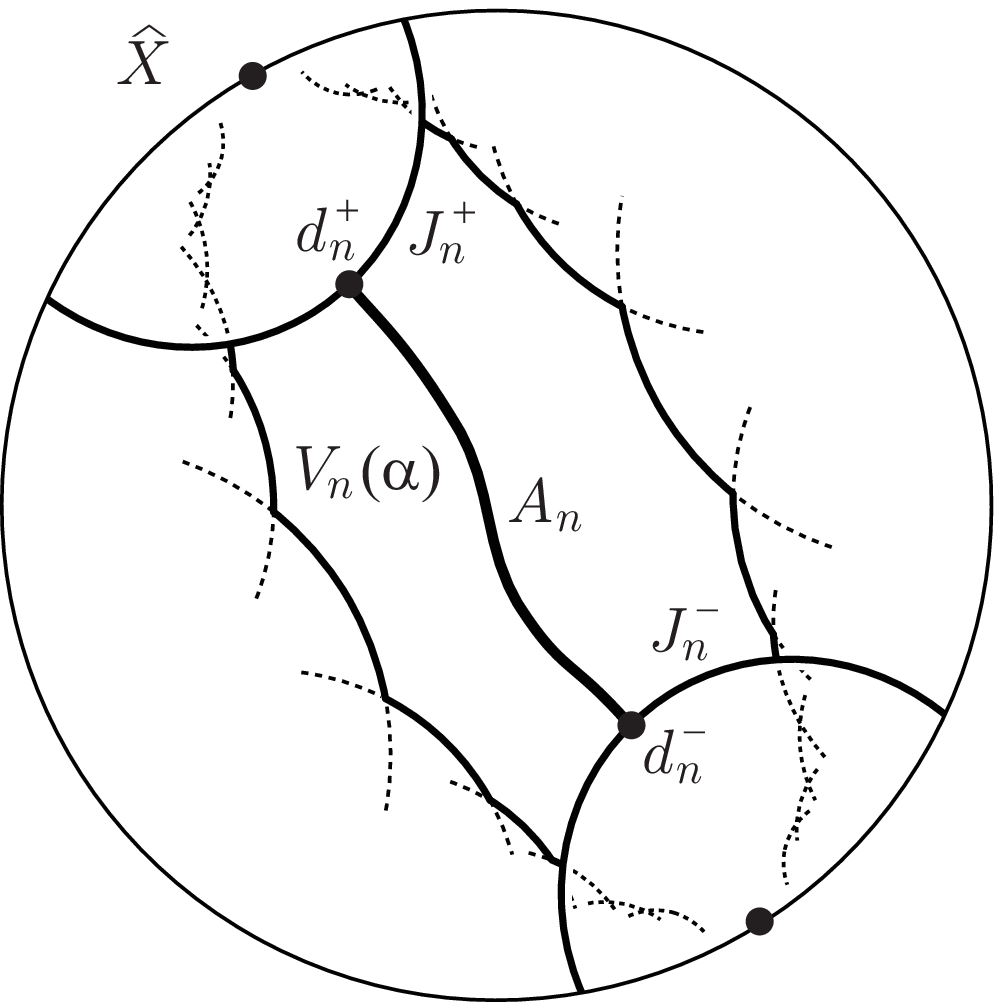}
\caption{}
\label{f_deviation}
\end{figure}
As in the proof of \cite[Lemma 2.1]{so}, one can show that $\{A_n\}$ has a
subsequence converging locally uniformly to a $\nu$-least area annulus
$A_\alpha$ associated to $\alpha$.  Since $A_n\subset V_\infty(\alpha)$, we
have $A_\alpha\subset V_\infty(\alpha)\subset L_{C_K}(\alpha)$. In
particular, $C_K$ is a deviation of~$A_\alpha$.

Now let $A'$ be any $\nu$-least area annulus associated to $\alpha$. For
any $n$, let $\lambda_1^{(n)},\dots,\lambda_{k}^{(n)}$ be the elements of
$\mathcal{L}^\vee(\alpha)$ such that $A_{\lambda_i^{(n)}}$ meets
$V_n(\alpha)$ non-trivially. Choose $m\in \N$ with $m>n$ so that $J_m^+\cup
J_m^-$ is disjoint from $A_{\lambda_1^{(n)}}\cup\cdots\cup
A_{\lambda_k^{(n)}}$. For $\tau\in \{+,-\}$, $A'$ contains a
non-contractible simple loop $l^\tau$ contained in the component of
$\widehat X\setminus J_m^\tau$ disjoint from
$A_{\lambda_1^{(n)}}\cup\cdots\cup A_{\lambda_k^{(n)}}$. Since the
sub-annulus $A_0'$ of $A'$ with $\partial A_0'=l^+\cup l^-$ is $\nu$-least
area, $A_0'\cap (A_{\lambda_1^{(n)}}\cup\cdots\cup
A_{\lambda_k^{(n)}})=\emptyset$.  This shows that $A_n'=A_0'\cap
V_n(\alpha)$ is an annulus properly embedded in $V_n(\alpha)$ and
connecting non-contractible simple loops in $J_n^+$ and $J_n^-$.  Since
$A'=\bigcup_n A_n'$, $A'$ is contained in $V_\infty(\alpha)\subset
L_{C_K}(\alpha)$. We conclude that $C_K$ is a common deviation for all
$\nu$-least area annuli associated to~$\alpha$.
\end{proof}

\begin{lemma}\label{l_outer}
For any $\nu\in K$ and any geodesic line $\alpha$ in $\H^2$,
let $\mathcal{A}_\nu(\alpha)$ be the set of all $\nu$-least area
annuli in $\widehat X$ associated to $\alpha$. Then one of the following
alternatives holds.
\begin{enumerate}[\rm (i)]
\item
$\mathcal{A}_\nu(\alpha)$ consists of a single element
$A_{\alpha[0]}^{\mathrm{out}}\,(\,= A_{\alpha[1]}^{\mathrm{out}})$.
\item
$\mathcal{A}_\nu(\alpha)$ contains two elements
$A_{\alpha[0]}^{\mathrm{out}}$, $A_{\alpha[1]}^{\mathrm{out}}$ with
$A_{\alpha[0]}^{\mathrm{out}}\cap A_{\alpha[1]}^{\mathrm{out}}=
\emptyset$ such that any other elements $A$ of $\mathcal{A}_\nu(\alpha)$
lie between $A_{\alpha[0]}^{\mathrm{out}}$ and
$A_{\alpha[1]}^{\mathrm{out}}$, that is, $A$ is contained in the
component $U$ of $\widehat X\setminus A_{\alpha[0]}^{\mathrm{out}}\cup
A_{\alpha[1]}^{\mathrm{out}}$ with $U\subset L_{C_K}(\alpha)$.
\end{enumerate}
\end{lemma}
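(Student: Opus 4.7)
The plan is to establish four facts: (a) any two distinct elements of $\mathcal{A}_\nu(\alpha)$ are disjoint; (b) $\mathcal{A}_\nu(\alpha)$ carries a natural total order coming from the separating behaviour of each annulus in the tube $L_{C_K}(\alpha)$; (c) this order has a maximum and a minimum element; and (d) the region squeezed between them lies in $L_{C_K}(\alpha)$.

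For (a), if $A, A' \in \mathcal{A}_\nu(\alpha)$ meet, the strong maximum principle for $\nu$-minimal surfaces rules out tangential contact on an open set unless $A = A'$, so we may assume any intersection is transverse. Choosing large compact sub-annuli of $A$ and $A'$ whose four boundary circles can be arranged to match up in pairs (using the product-like structure of $L_{C_K}(\alpha)$ transverse to $A_\alpha^\natural$), a standard Meeks--Yau / Freedman--Hass--Scott roundoff-exchange along a component of $A \cap A'$ produces two annuli with the same boundary but strictly smaller combined area, contradicting that both $A$ and $A'$ are $\nu$-least area. Hence either $A = A'$ or $A \cap A' = \emptyset$.

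For (b), fix an orientation of $\H^2$ and let $E_0, E_1$ be the two components of $\widehat X \setminus L_{C_K}(\alpha)$, labelled by the two sides of $\alpha$. Each $A \in \mathcal{A}_\nu(\alpha)$ is contained in $L_{C_K}(\alpha)$ and properly homotopic to $A_\alpha^\natural$, so $E_0$ and $E_1$ lie in distinct components $U_0(A), U_1(A)$ of $\widehat X \setminus A$. Declaring $A \leq A'$ iff $A' \subset U_1(A) \cup A$, disjointness from (a) makes this a total order on $\mathcal{A}_\nu(\alpha)$. For (c), from any chain extract a monotonically increasing cofinal subsequence $A_1 \leq A_2 \leq \cdots$, all contained in $L_{C_K}(\alpha)$ by Lemma~\ref{l_deviation}. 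On any compact $W \subset \widehat X$ the sub-annuli $A_n \cap W$ have uniformly bounded area (each is $\nu$-least area in a fixed homotopy class) and uniformly positive injectivity radius, so the Ascoli--Arzel\`a / least-area compactness argument from Lemma~\ref{l_deviation} produces a locally uniform limit $A_\infty$: a properly embedded $\nu$-least area annulus in $L_{C_K}(\alpha)$ still properly homotopic to $A_\alpha^\natural$, hence in $\mathcal{A}_\nu(\alpha)$. This $A_\infty$ is an upper bound for the chain, so by Zorn's lemma a maximum $A_{\alpha[1]}^{\mathrm{out}}$ exists; symmetrically a minimum $A_{\alpha[0]}^{\mathrm{out}}$ exists.

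If $A_{\alpha[0]}^{\mathrm{out}} = A_{\alpha[1]}^{\mathrm{out}}$ we are in case (i); otherwise they are distinct and disjoint, and totality of the order forces every other element of $\mathcal{A}_\nu(\alpha)$ to lie in $U := U_1(A_{\alpha[0]}^{\mathrm{out}}) \cap U_0(A_{\alpha[1]}^{\mathrm{out}})$, the unique component of $\widehat X \setminus (A_{\alpha[0]}^{\mathrm{out}} \cup A_{\alpha[1]}^{\mathrm{out}})$ between them. For (d), a point $p \in U \setminus L_{C_K}(\alpha)$ would have to lie in $E_0 \cup E_1$; but $E_0 \subset U_0(A_{\alpha[0]}^{\mathrm{out}})$ is disjoint from $U$, and $E_1 \subset U_1(A_{\alpha[1]}^{\mathrm{out}})$ is disjoint from $U$, a contradiction. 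The main obstacle is step (a): because the annuli are properly embedded (non-compact) and could a priori intersect in infinitely many curves, the roundoff-exchange must be set up on carefully chosen compact sub-annuli with controlled matching boundaries in the tube $L_{C_K}(\alpha)$, which is the technical heart of the argument; the rest is bookkeeping once disjointness is in hand.
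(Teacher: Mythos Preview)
Your overall strategy is coherent, but step (a) contains a genuine gap that you yourself flag without resolving. The roundoff-exchange argument of Freedman--Hass--Scott requires two compact least-area surfaces with \emph{identical} boundary; your assertion that compact sub-annuli of $A$ and $A'$ can be chosen ``whose four boundary circles can be arranged to match up in pairs'' is precisely the point that needs proof. Boundary circles of a sub-annulus of $A$ lie on $A$, those of a sub-annulus of $A'$ lie on $A'$, so matching them forces them into $A\cap A'$; if $A\cap A'$ contains no essential (core) circles---for instance, if it consists entirely of proper arcs running end to end, which is not ruled out since both annuli limit on the same ends of $L_{C_K}(\alpha)$---no such matching is available and the exchange cannot be set up. In short, the standard cut-and-paste contradiction applies when the intersection is compact (this is how the paper uses \cite[Lemma~1.3]{fhs} elsewhere), and you have not reduced to that situation. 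Your step~(c) also assumes every chain has countable cofinality, a lesser but real omission.

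The paper avoids the pairwise-disjointness question entirely; indeed its statement does not assert that arbitrary elements of $\mathcal{A}_\nu(\alpha)$ are disjoint, only that the two outermost ones exist and trap the rest. Rather than ordering $\mathcal{A}_\nu(\alpha)$ and seeking extrema, the paper uses the exhaustion $V_n(\alpha)$ from Lemma~\ref{l_deviation}: a convergence-plus-maximum-principle argument shows $\bigcup\mathcal{A}_\nu(\alpha)$ stays away from a neighbourhood of the lateral boundary $\partial_0 V_n(\alpha)$, so all elements cross $J_n^\pm$ inside fixed sub-annuli $Q_n^\pm$. One then takes \emph{compact} $\nu$-least-area annuli $A_{n,0},A_{n,1}$ in $V_n(\alpha)$ with prescribed disjoint boundaries $\partial Q_n^+\cup\partial Q_n^-$; for these the standard FHS disjointness applies, making them barriers that trap $(\bigcup\mathcal{A}_\nu(\alpha))\cap V_n(\alpha)$. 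Their limits as $n\to\infty$ are the outermost annuli, and the dichotomy (i)/(ii) comes from the maximum principle applied to the two limits. This construction never compares two arbitrary non-compact elements of $\mathcal{A}_\nu(\alpha)$ directly, which is exactly the step your approach leaves open.
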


The open annuli $A_{\alpha[k]}^{\mathrm{out}}$ given in Lemma \ref{l_outer}
are called the \emph{outermost elements} of $\mathcal{A}_\nu(\alpha)$.

\begin{proof}
We continue to use the notation of Lemma \ref{l_deviation}. In particular,
there is a region $V_\infty(\alpha)\subset L_{C_K}(\alpha)$ for which any
$A\in A_\nu(\alpha)$ is contained in $V_\infty(\alpha)$, and for any
$n\in \N$, $A\cap V_n(\alpha)$ is a $\nu$-least area annulus bounding
non-contractible simple loops in $J_n^+$ and $J_n^-$. 

The closure $\partial_0 V_n(\alpha)$ of $\partial V_n(\alpha)\setminus
(J_n^+\cup J_n^-)$ in $\widehat X$ consists of two annuli. We claim that
some neighborhood of these annuli is disjoint from $\bigcup
\mathcal{A}_\nu(\alpha)$. If not, then there would exist a sequence
$\{A_m\}$ in $\mathcal{A}_\nu(\alpha)$ converging to an element $A_\infty$
in $\mathcal{A}_\nu(\alpha)$ with $A_\infty\cap \partial_0 V_n(\alpha)\neq
\emptyset$. Then, some $A_{\lambda_i^{(n)}}$ given in the proof of Lemma
\ref{l_deviation} and $A_\infty$ would have a tangent point but no
transverse points. A fundamental fact in minimal surface theory implies
that $A_\infty=A_{\lambda_i^{(n)}}$. This contradicts the fact that
$A_\infty\subset V_\infty(\alpha)$, establishing the claim.

By the claim, there exist sub-annuli
$Q_n^\tau$ of $V_n(\alpha)\cap J_n^\tau$ for $\tau\in\{+,-\}$ such that
$\mathrm{Int} Q_n^\tau$ contains $\bigl(\bigcup
\mathcal{A}_\nu(\alpha)\bigr)\cap J^\tau$. We then have mutually disjoint
$\nu$-least area annuli $A_{n,0}$ and $A_{n,1}$ in $V_n(\alpha)$ with
$\partial A_{n,0}\cup \partial A_{n,1}=\partial Q_n^+\cup \partial Q_n^-$
such that the union $A_{n,0}\cup A_{n,1}\cup Q_n^+\cup Q_n^-$ bounds a
solid torus $W_n$ in $V_n(\alpha)$ with $(\bigcup
\mathcal{A}_\nu(\alpha)\bigr)\cap V_n(\alpha)\subset W_n\setminus
(A_{n,0}\cup A_{n,1})$. Passing if necessary to subsequences, we may
assume that both $\{A_{n,0}\}$ and $\{A_{n,1}\}$ converge locally uniformly
to elements $A_{\alpha[0]}^{\mathrm{out}}$,
$A_{\alpha[1]}^{\mathrm{out}}\in \mathcal{A}_\nu(\alpha)$ respectively.
Since $A_{n,0}\cap A_{n,1}=\emptyset$ for all $n\in \N$, if
$A_{\alpha[0]}^{\mathrm{out}} \cap A_{\alpha[1]}^{\mathrm{out}}\neq
\emptyset$, then any elements of the intersection are tangent points but
not transverse points.  This implies that
$A_{\alpha[0]}^{\mathrm{out}}=A_{\alpha[1]}^{\mathrm{out}}$ and hence
$\mathcal{A}_\nu(\alpha)$ is the single element set
$\{A_{\alpha[0]}^{\mathrm{out}}\}$.  In the case of
$A_{\alpha[0]}^{\mathrm{out}}\cap A_{\alpha[1]}^{\mathrm{out}}= \emptyset$,
since $\bigl(\bigcup \mathcal{A}_\nu(\alpha)\bigr)\cap V_n(\alpha)\subset
W_n\setminus (A_{n,0} \cup A_{n,1})$ for any $n\in \N$, any elements of
$\mathcal{A}_\nu(\alpha)\setminus \{A_{\alpha[0]}^{\mathrm{out}},
A_{\alpha[1]}^{\mathrm{out}}\}$ lie between $A_{\alpha[0]}^{\mathrm{out}}$
and $A_{\alpha[1]}^{\mathrm{out}}$ in $\widehat X$.
\end{proof}

\section{Canonical solid tori}\label{FindTori}

For any geodesic line $\alpha$ in $\H^2$ and $\nu\in \mathcal{RM}(M)$, let
$A_{\alpha[0]}^{\mathrm{out}}$, $A_{\alpha[1]}^{\mathrm{out}}$ be the
outermost annuli in $\mathcal{A}_\nu(\alpha)$. In this section we will use
these annuli to construct solid tori in $M$. These tori are canonical in
that they depend only on the choice of Riemannian metric~$\nu$.

In the base orbifold $O=O(p,q,r)$, where $2\leq p\leq q \leq r$, fix once
and for all a singular point $\overline x_0$ that corresponds to the fixed
point of an elliptic element of $G=\pi_1^{\,\mathrm{orb}}(O)$ of order
$r$. Fix $x_0\in (a\circ \widehat a)^{-1}(\overline x_0)$ and write the
orbit $Gx_0$ as $\{x_i\}_{i\in \Gamma}$, where $\Gamma$ is an index set
containing $0$. For any $i,j\in \Gamma$ with $i\neq j$, let
$\alpha_{i:j}=\alpha_{j:i}$ denote the perpendicular bisector line of the
geodesic segment in $\H^2$ connecting $x_i$ with $x_j$. For $\ell=0,1$, we
write $A_{i:j[\ell]}^{\mathrm{out}}$ for~$A_{\alpha_{i:j}[\ell]}^{\mathrm{out}}$.

Let $H_{i\prec j[k]}$ be the component of $\widehat X\setminus
A_{i:j[k]}^{\mathrm{out}}$ quasi-isometric to the component of
$\H^2\setminus \alpha_{i:j}$ containing $x_i$ via $\widehat
\sigma$.  If $H_{i\prec j[0]}\subset H_{i\prec j[1]}$, then we set
$H_{i\prec j}^{\mathrm{inn}}= H_{i\prec j[0]}$.  Otherwise set $H_{i\prec
j}^{\mathrm{inn}}=H_{i\prec j[1]}$.  In particular, our definition
implies that $H_{i\prec j}^{\mathrm{inn}}\cap H_{j\prec
i}^{\mathrm{inn}}=\emptyset$.

A simple loop $c$ in an open solid torus $U$ is a \emph{core} if
$U\setminus c$ is homeomorphic to $(D^\circ\setminus \{\mathbf{0}\})\times
S^1$, where $D^\circ$ is the open unit disk in $\R^2$ centered at
the origin $\mathbf{0}$.  A \emph{core} of a solid torus $V$ is a core of
$\mathrm{Int} V$.

As in the proof of \cite[Lemma 3.1]{so}, one can show that, for any $\nu\in
\mathcal{RM}(M)$ and any $i\in \Gamma$, just one component of the
intersection $\bigcap_{j\in \Gamma\setminus \{i\}} H_{i\prec
j}^{\mathrm{inn}}$ is an open solid torus $\widehat U_{i,\nu}$ such that
a core of $\widehat U_{i,\nu}$ is also a core of $\widehat X$, and all other
components are open 3-balls.

Since $G$ acts on both $\H^2$ and
$\widehat X_\nu$ isometrically, the uniqueness of the outermost annuli
implies that
\begin{equation*}
g(A_{\alpha[0]}^{\mathrm{out}}\cup
A_{\alpha[1]}^{\mathrm{out}})=A_{g(\alpha)[0]}^{\mathrm{out}} \cup
A_{g(\alpha)[1]}^{\mathrm{out}}
\end{equation*}
for any $g\in G$.  Consequently, if $x_i=g(x_0)$ for $g\in G$, $\widehat
U_{i,\nu}=g(\widehat U_{0,\nu})$.  >From our construction of $\widehat
U_{i,\nu}$, we know that the stabilizer $\mathrm{stab}_G (\widehat
U_{i,\nu})$ of $U_{i,\nu}$ in $G$ is isomorphic to the stabilizer
$\mathrm{stab}_G(x_i)$ for the action of $G$ on $\H^2$. Since
$\mathrm{stab}_G(x_i)\cong \mathbf{Z}_r$, $U_\nu=p\circ \widehat p(\widehat
U_{i,\nu})$ is an open solid torus in $M$ and the restriction $q_i\colon
\widehat U_{i,\nu}\longrightarrow U_\nu$ of $p\circ \widehat p_i$ on
$\widehat U_{i,\nu}$ is an $r$-fold cyclic covering.  This $U_\nu$ is
called the \textit{$\nu$-canonical solid torus.}

Since $M$ is a Seifert fibered space with hyperbolic base orbifold, there
exists a metric on $M$ modeled on either $\H^2\times \R$ or
$\widetilde{\mathrm{SL}_2}(\R)$, see \cite{th,sc2} for details.  Fix such a
metric, which we will call the \emph{base metric} on $M$ and denote by
$\nu^\natural$. 

We show that, for any geodesic $\alpha$ in $\mathbb{H}^2$,
$A_\alpha^\natural=\widehat{\sigma}^{-1}(\alpha)$ is the unique
$\nu^\natural$-least area annulus associated to $\alpha$.  For suppose that
$A$ is any $\nu^\natural$-least area annulus associated to $\alpha$.  If
$A\neq \widehat{\sigma}^{-1}(\alpha)$, then $\widehat{\sigma}(A)\setminus
\alpha$ would be non-empty.  Hence we have a $\gamma\in
\mathrm{Isom}(\mathbb{H}^2)$ such that $\alpha\cap \gamma(\alpha)
=\emptyset$ but $\widehat{\sigma}(A)\cap \gamma(\widehat{\sigma}(A))$ is a
non-empty compact set.  Then there exists a isometric transformation
$\widehat\gamma$ on $\widehat X_{\nu^\natural}$ covering $\gamma$ such that
$A \cap \widehat\gamma(A)$ is a non-empty compact set.  This contradicts
that both $A$ and $\widehat\gamma(A)$ are $\nu^\natural$-least area, see
for example \cite[Lemma 1.3]{fhs}.  This shows that $A=A_\alpha^\natural$.

Since $A_{i,j}^\natural=\widehat{\sigma}^{-1}(\alpha_{i,j})$ is the unique
$\nu^\natural$-least area annulus associated to $\alpha_{i,j}$, we have
$A_{i,j[0]}^{\mathrm{out}} = A_{i,j[1]}^{\mathrm{out}}$ in
$\widehat{X}_{\nu^\natural}$. Therefore $\widehat
c^\natural=\widehat\sigma^{-1}(x_i)$ is a geodesic core of $\widehat
U_{i,\nu^\natural}$ and $c^\natural=q_i(\widehat {c}\,^\natural)$ is a
geodesic core of $U_{\nu^\natural}$.

\section{Two key lemmas}\label{Lemmas}

\longpage
The two lemmas in this section correspond respectively to the Coarse Torus
Isotopy Theorem and the Local Contractibility Theorem of
Gabai \cite[Theorems 4.6 and 6.3]{ga2}.

To set notation, denote by $B^{n+1}$ the unit $(n+1)$-ball in $\R^{n+1}$
centered at the origin $\mathbf{0}$, and by $S^n=\partial B^{n+1}$ the unit
sphere with base point $y_0=(1,0,\dots,0)\in \R^{n+1}$. We always suppose
that $S^n$ and $B^{n+1}$ have the Riemannian metrics induced from the
standard Euclidean metric on~$\R^{n+1}$.

For any cell-decomposition $\Delta$ of $B^{n+1}$, the set of $i$-cells in
$\Delta$ will be denoted by $\Delta^{(i)}$ and the union $\Delta^{(0)}\cup
\Delta^{(1)}\cup\cdots\cup \Delta^{(i)}$ by $\Delta^{[i]}$.  For a subset
$\Delta_0$ of $\Delta$, $|\Delta_0|:=\bigcup_{\sigma\in \Delta_0}\sigma$ is
the \emph{underlying space} of $\Delta_0$.  For two solid tori $W,V$, the
relation $W\Subset V$ means that $W\subset \mathrm{Int} V$ and $W$ and $V$
have a common core.  Similarly, $c\Subset V$ means that $c$ is a core
of~$V$.

Suppose that $f\colon K\longrightarrow \diff(M)$ is a continuous map. For
$y\in K$, write $f_y$ for the diffeomorphism $f(y)$, and for any $L\subset
K$, write $f_L$ for~$f|_L$.

\begin{lemma}\label{Cell}
Let $f\colon S^n\longrightarrow \diff(M)$ be continuous.  Then there exist
a cell-decomposition $\Delta$ of $B^{n+1}$ and a map $V$ on $\Delta$
satisfying the following conditions.
\begin{enumerate}[\rm (i)]
\item
For any $\sigma\in \Delta$, $V_\sigma:=V(\sigma)$ is a solid torus in $M$
such that if $\kappa$ is a face of $\sigma$, then $V_\kappa \Subset
V_\sigma$.
\item
For any $y\in \sigma\cap S^n$, $f_y(c^\natural)\Subset V_\sigma$.
\end{enumerate}
\end{lemma}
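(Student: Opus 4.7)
Following the sketch in Section~\ref{sec:sketch}, the first step is to lift $f$ to the space of metrics. Define $\widetilde\nu(y) = (f_y)_*\nu^\natural$ for $y\in S^n$, and extend to a continuous $\nu\colon B^{n+1} \to \mathcal{RM}(M)$ using the fact that $\mathcal{RM}(M)$ is convex and hence contractible. For each $y\in B^{n+1}$, Section~\ref{FindTori} supplies the $\nu(y)$-canonical solid torus $U_{\nu(y)}\subset M$; naturality of the least-area/outermost construction under diffeomorphisms gives $U_{\widetilde\nu(y)}=f_y(U_{\nu^\natural})$ for $y\in S^n$, so $f_y(c^\natural)$ is a core of $U_{\nu(y)}$.

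The second ingredient I need is upper semicontinuity of $y\mapsto U_{\nu(y)}$: for every $y_*\in B^{n+1}$ and every open solid torus $W\subset M$ with $U_{\nu(y_*)}\Subset W$ (core in the class of $c^\natural$), there is a neighborhood $\mathcal{O}(y_*)\subset B^{n+1}$ of $y_*$ with $U_{\nu(y)}\Subset W$ for all $y\in\mathcal{O}(y_*)$. I would prove this by an Ascoli--Arzel\`a argument on outermost annuli: with $K=\nu(B^{n+1})$ compact, the uniform deviation bound $C_K$ of Lemma~\ref{l_deviation} confines all outermost annuli $A_{i:j[k]}^{\mathrm{out}}$ for the various $\nu(y)$ to the fixed neighborhood $L_{C_K}(\alpha_{i:j})$. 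Along any $y_m\to y_*$, subsequential limits of these annuli are $\nu(y_*)$-least area annuli associated to $\alpha_{i:j}$, and Lemma~\ref{l_outer} forces them to lie between the outermost $\nu(y_*)$-annuli; the regions $H_{i\prec j}^{\mathrm{inn}}$ therefore sweep inside any prescribed $W$ containing $U_{\nu(y_*)}$. Compactness of $B^{n+1}$ then yields a finite subcover $\mathcal{O}(y_1),\ldots,\mathcal{O}(y_N)$ with enclosing solid tori $W_1,\ldots,W_N$.

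Given this cover, the Lebesgue number lemma supplies a cell decomposition $\Delta$ of $B^{n+1}$, compatible with $\partial B^{n+1}=S^n$, so that every closed cell $\bar\sigma$ lies in some $\mathcal{O}(y_{j(\sigma)})$. I would construct $V$ by upward induction on $\dim\sigma$. For a $0$-cell $v$, choose $V_v$ with $U_{\nu(v)}\Subset V_v$ and $V_v\Subset W_{j(\tau)}$ for every cell $\tau$ whose closure contains $v$; this is possible because $v\in\mathcal{O}(y_{j(\tau)})$ forces $U_{\nu(v)}\Subset W_{j(\tau)}$ for each such $\tau$. For an $i$-cell $\sigma$, the face tori $V_\kappa$ (by inductive hypothesis) together with the canonical tori $\{U_{\nu(y)}\}_{y\in\bar\sigma}$ all lie inside $W_{j(\sigma)}$ with cores in the class of $c^\natural$, so they can be enclosed in a single solid torus $V_\sigma\Subset W_{j(\sigma)}$ sharing this core class, giving $V_\kappa\Subset V_\sigma$ for each face $\kappa$ and $f_y(c^\natural)\Subset V_\sigma$ for $y\in\sigma\cap S^n$ because $f_y(c^\natural)$ is a core of $U_{\nu(y)}\Subset V_\sigma$.

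The main obstacle is the upper-semicontinuity statement and its uniformity, that is, the interplay of the outermost-annulus machinery with varying metric. Once this is in hand, the inductive step is an essentially formal engulfing argument: inside an ambient solid torus with $c^\natural$ as its core class, any finite collection of nested smaller solid tori with the same core class can be enclosed by an intermediate solid torus. The combinatorial bookkeeping (choosing each $V_v$ small enough to respect the whole star of $v$, and propagating smallness up the skeleta) is manageable once the cell decomposition is refined via the Lebesgue number lemma; the genuinely technical content remains the limit argument for outermost least-area annuli under varying metric.
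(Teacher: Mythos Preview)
Your semicontinuity statement has the inclusion running the wrong way, and this is not a cosmetic slip: it is precisely the point that forces the paper's dualization trick. From the convergence of outermost $\nu(y_m)$-annuli to a $\nu(y_*)$-least-area annulus lying between the two outermost $\nu(y_*)$-annuli, what follows is that the limit of $H_{i\prec j}^{\mathrm{inn}}(\nu(y_m))$ \emph{contains} $H_{i\prec j}^{\mathrm{inn}}(\nu(y_*))$, and hence the limit of $U_{\nu(y_m)}$ \emph{contains} $U_{\nu(y_*)}$. When $\mathcal{A}_{\nu(y_*)}(\alpha_{i:j})$ has two distinct outermost members, nothing prevents the inner outermost annulus for $\nu(y_m)$ from converging to the \emph{outer} one for $\nu(y_*)$, so $U_{\nu(y_m)}$ may be strictly larger than $U_{\nu(y_*)}$ in the limit and escape your prescribed $W$. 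Thus your ``for every $W\supset U_{\nu(y_*)}$ there is a neighborhood with $U_{\nu(y)}\Subset W$'' is not justified, and your upward induction rests on it.

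The correct semicontinuity---any compact $V\Subset U_{\nu(y)}$ satisfies $V\subset U_{\nu(z)}$ for $z$ near $y$---is what the paper uses. It lets you associate to each point $y$ a \emph{small} solid torus $V_{y,n+1}\Subset U_{\nu(y)}$ that persists inside the canonical tori of nearby points, and then step down through the skeleta, at each stage enlarging: for $z$ in the $n$-skeleton one finds $V_{z,n}$ with $V_{y_i,n+1}\Subset V_{z,n}\subset U_{\nu(w)}$ for nearby $w$, and so on. This produces the nesting with \emph{low}-dimensional cells carrying the \emph{large} tori---the reverse of condition~(i). The paper fixes this by doubling along $S^n$ and passing to the dual cell decomposition, which swaps dimensions and hence reverses the nesting into the required form. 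Your direct upward induction avoids dualization only because the reversed (and unsupported) semicontinuity would hand you the correct nesting for free.
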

\begin{proof}
Let $\nu_S\colon S^n\longrightarrow \mathcal{RM}(M)$ be the continuous map
defined by the push forward metrics $\nu_S(y)=(f_y)_*(\nu^\natural)$ $(y\in
S^n)$.  Since $\mathcal{RM}(M)$ is contractible, $\nu_S$ extends to a
continuous map $\nu\colon B^{n+1} \longrightarrow \mathcal{RM}(M)$.

We first examine the limiting behavior of canonical solid tori. Suppose
that $\{y_m\}$ is a sequence in $B^{n+1}$. Passing if necessary to a
subsequence, we assume that $\{y_m\}$ converges to a point $y_\infty\in
B^{n+1}$. For any $j\in \Gamma\setminus \{i\}$, let
$A_{i:j,m}^{\mathrm{out}}$ be the outermost $\nu(y_m)$-least area annulus in
$\widehat X$ with $A_{i:j,m}^{\mathrm{out}}= \mathrm{Fr}(H_{i\prec
  j}^{\mathrm{inn}})$.  By Lemma \ref{l_deviation}, again passing if
necessary to a subsequence, we may assume that these annuli
$A_{i:j,m}^{\mathrm{out}}$ converge locally uniformly to
$\nu(y_\infty)$-least area annuli $A_{i:j,\infty}$ in $\widehat X$
associated to $\alpha_{i:j}$, see \cite[Lemma 3.3]{hs}, \cite[Lemma
  3.3]{ga1} and also the proof of \cite[Theorem 0.2]{so}. The
$A_{i:j,\infty}$ may not be outermost $\nu(y_\infty)$-least area
annuli. But as in the proof of \cite[Lemma 3.1]{so}, $\bigcap_{j\in
  \Gamma\setminus \{i\}} H_{i\prec j}$ contains a unique open solid torus
component $\widehat U$ to which the open solid tori $\widehat
U_{i,\nu(y_m)}$ converge locally uniformly as embeddings from the standard
open solid torus $D^\circ\times S^1$, where $H_{i\prec j}$ is the component
of $\widehat X\setminus A_{i:j,\infty}$ containing $H_{i\prec
j}^{\mathrm{inn}}$. Since each $\widehat U_{i,\nu(y_m)}$ is
$G$-equivariant, $\widehat U$ is also $G$-equivariant.  Thus $U=p\circ
\widehat p(\widehat U)$ is an embedded open solid torus in $M$ containing
$U_{\nu(y_\infty)}$.

Now, for any $y\in B^{n+1}$, fix a solid torus $V_{y,n+1}\Subset
U_{\nu(y)}$. For any $y\in S^n$, since $f_y\colon M_{\nu^\natural}\longrightarrow
M_{(f_y)_*(\nu^\natural)}$ is isometric, we may take $V_{y,n+1}$ so that
$f_y(c^\natural)\Subset V_{y,n+1}$.

We claim that there exists $\delta_{y,n+1}>0$ such that $V_{y,n+1}\subset
U_{\nu(z)}$ if $\mathrm{dist}(y,z)<\delta_{y,n+1}$. If not, then we would
have a sequence $\{z_m\}$ in $B^{n+1}$ with $\mathrm{dist}(y,z_m)<1/m$ and
$V_{y,n+1}\not \subset U_{\nu(z_m)}$. Passing if necessary to a
subsequence, we may as above assume that the $U_{\nu(z_m)}$ converge
locally uniformly to an open solid torus $U$ with $U\supset
U_{\nu(y)}$. Since $V_{y,n+1}$ is a compact subset of $U_{\nu(y)}\subset
U$, $V_{y,n+1}$ would be contained in $U_{\nu(z_m)}$ for all sufficiently
large $m$, a contradiction.

Let $B_{n+1}^\circ (y)$ denote the open $\delta_{y,n+1}$-neighborhood of
$y$ in $B^{n+1}$. We choose the $\delta_{y,n+1}$ so that $B_{n+1}^\circ
(y)\cap S^n=\emptyset$ if $y\in \mathrm{Int} B^{n+1}$. Moreover, since
$f_y(c^\natural)$ moves continuously on $y\in S^n$, we may choose the
$\delta_{y,n+1}>0$ so that $f_z(c^\natural)\Subset V_{y,n+1}$ for any $z\in
B_{n+1}^\circ(y)\cap S^n$.

Fix a finite collection $\{B_{n+1}^\circ(y_1),\dots, B_{n+1}^\circ(y_k)\}$
that covers $B^{n+1}$.  Let $\Delta_{n+1}^*$ be a piecewise smooth cell
decomposition on $B^{n+1}$ such that any $(n+1)$-cell $\sigma$ of
$\Delta_{n+1}^*$ is contained in at least one of the $B^\circ(y_i)$. Then,
put $V_\sigma^*=V_{y_i,n+1}$ for some $y_i$ with $B_{n+1}^\circ(y_i)\supset
\sigma$.

Next, we will define a subdivision $\Delta_n^*$ of $\Delta_{n+1}^{*[n]}$.
Let $z$ be any element of $B^{n+1}$.  As above, there exists
$\delta_{z,n}>0$ and a solid torus $V_{z,n}$ satisfying $V_{y_i,n+1}\Subset
V_{z,n}\subset U_{\nu(w)}$ for any $w\in B_n^\circ(z)$ and any $y_i$ $(i\in
\{1,\dots,k\})$ with $z\in B_{n+1}^\circ(y_i)$.  For any element $\tau$ of
$\Delta_{n+1}^{*(n)}$, there exists a finite subset $\{z_1,\dots,z_l\}$ of
$\tau$ such that $\{B_n^\circ(z_1),\dots,B_n^\circ(z_l)\}$ covers $\tau$.
Then we take a cell decomposition $\Delta^*(\tau)$ of $\tau$ such that each
$n$-cell of $\Delta^*(\tau)$ is contained in at least one of the
$B_n^\circ(z_i)$ $(i=1,\dots,l)$.  We set $\Delta_n^*=\bigcup_{\tau\in
  \Delta_{n+1}^{*(n)}}\Delta^*(\tau)$.

If $\sigma\in \Delta^*(\tau)^{(n)}\subset \Delta_n^{*(n)}$, then we set
$V_\sigma^*=V_{z_j,n}$ for some $z_j$ with $B_n^\circ(z_j)\supset \sigma$.
If $\sigma$ is contained in a face of $\sigma'\in \Delta_{n+1}^{*(n+1)}$,
then $\tau$ is the face.  It follows that $V_{\sigma'}^*=V_{y_i,n+1}\Subset
V_{z_j,n}=V_\sigma^*$. 

Repeating this process on descending skeleta, we define cell complexes
$\Delta_{n-1}^*,\dots,\Delta_0^*$ and extend the domain of the function
$V^*$ to $\Delta_{n-1}^{*(n-1)}\cup\cdots\cup \Delta_0^{*(0)}$ so that
$\Delta_i^*$ is a subdivision of $\Delta_{i+1}^{*[i]}$ and
$V^*_{\sigma'}\Subset V_\sigma^*$ whenever $\sigma\in \Delta_i^{*(i)}$ is
in a face of $\sigma'\in \Delta_{i+1}^{*(i+1)}$. The union
\[\Delta^*=\Delta_{n+1}^{*(n+1)}\cup \Delta_n^{*(n)}\cup \cdots\cup
\Delta_0^{*(0)}\] is a cell decomposition on $B^{n+1}$.  

Now form the double $d\Delta^*$ of $\Delta^*$ along $\Delta^*|_{S^n}$,
obtaining a cell decomposition on $dB^{n+1}=S^{n+1}$. Let $(d\Delta^*)^*$
be the dual cell decomposition of $d\Delta^*$. The set $\Delta$ of all
non-empty $\sigma\cap B^{n+1}$ and $\sigma\cap S^n$ for $\sigma \in
(d\Delta^*)^*$ defines a cell decomposition on $B^{n+1}$. We define the
map $V$ satisfying conditions (i) and (ii) of this lemma as follows:
\begin{itemize}
\item
If $\sigma\cap S^n=\emptyset$, then $V_\sigma=V^*_\tau$ for $\tau\in
\Delta^{*(n+1-i)}$ dual to $\sigma$.
\item
If $\sigma\cap S^n\neq \emptyset$ and $\sigma\not\subset S^n$,
$V_\sigma=V^*_\tau$ for $\tau\in \Delta^{*(n+1-i)}$ dual to the double
$d\sigma$ of $\sigma$.
\item
If $\sigma\subset S^n$, then $V_\sigma$ is a solid torus in $\mathrm{Int}
V_{\sigma'}$ obtained by slightly shrinking $V_{\sigma'}$, where $\sigma'$
is the cell of $\Delta$ with $\sigma'\not\subset S^n$ and $\sigma=
\sigma'\cap S^n$.
\end{itemize}
This completes the proof.
\end{proof}

Let $W$, $V$ be solid tori in $M$ with $c^\natural\Subset W\Subset V$.  One
can choose a Seifert fibration $\mathcal{F}$ on $M$ so that $W$ is a union
of fibers and $c^\natural$ is an exceptional fiber of order $r$.  The
restriction $\mathcal{F}_N$ of $\mathcal{F}$ on $N=M\setminus \mathrm{Int}W$ 
defines a Seifert fibration over a disk with two exceptional fibers.

Let $\Emb(W,\mathrm{Int} V)$ be the space of embeddings of $W$ into
$\mathrm{Int} V$ with the $C^\infty$-topology, and $\emb(W,\mathrm{Int} V)$
the arcwise connected component containing the inclusion $i\colon W\subset
\mathrm{Int} V$. According to Lemma~5.1 and Remark~5.2 of \cite{ga2}, 
\[\emb(W,\mathrm{Int} V)\simeq
\diff(W)\simeq \diff(\partial W)\simeq S^1\times S^1,\] where $S^1\times
S^1$ represents a free action on $\partial W$ preserving the fibration
$\mathcal{F}|_{\partial W}$. The $S^1$-action from the left factor
preserves each fiber of $\mathcal{F}|_{\partial W}$ as a set, and the one
from the right factor preserves some simple loop in $\partial W$ meeting
each fiber of $\mathcal{F}|_{\partial W}$ transversely in a single
point. The left factor action extends to a fiber-preserving $S^1$-action on
$M$, which defines a continuous map $\varphi\colon S^1\longrightarrow
\diff(M)$ with $\varphi_{y_0}=\mathrm{Id}_M$.

For any $m\in \mathbf{Z}$, we define
$\varphi^m\colon S^1\longrightarrow \diff(M)$ as follows.
\begin{itemize}
\item
$(\varphi^0)_y=\mathrm{Id}_M$ for any $y\in S^1$.
\item
For any $m>0$ (resp.\ $m<0$), $(\varphi^m)_y\colon M\longrightarrow M$ $(y\in
S^1)$ is the composition of $|m|$ copies of $\varphi_y$
(resp.\ $(\varphi_y)^{-1}$).
\end{itemize}   
Let $\mathbf{Z}_V$ be the subgroup of $\pi_1(\emb(W,\mathrm{Int}
V))$ generated by the left factor $S^1$-action.

\begin{lemma}\label{Embed}
Suppose that $f\colon S^n\longrightarrow \diff(M)$ is a continuous map
with $f_{y_0}= \mathrm{Id}_M$ and $f_y(c^\natural)\Subset V$ for any $y$ in
$S^n$.
\begin{enumerate}[\rm (i)]
\item
If $n=1$, then $f$ is homotopic rel.\ $y_0$ to $\varphi^m$ for some $m\in
\mathbf{Z}$.  Moreover, if $f$ is contractible in $\diff(M)$,
then $f$ extends to a continuous map $F\colon B^2\longrightarrow
\diff(M)$ with $F_z(c^\natural)\Subset V$ for any $z\in B^2$.
\item
If $n\neq 1$, then $f$ extends to a continuous map $F\colon
B^{n+1}\longrightarrow \diff(M)$ with $F_z(c^\natural)\Subset V$ for any
$z\in B^{n+1}$.
\end{enumerate}
\end{lemma}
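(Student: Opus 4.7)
The approach is to restrict $f$ to $W$ and exploit the structure $\emb(W,\mathrm{Int} V)\simeq S^1\times S^1$ cited from~\cite{ga2}. First, the hypothesis $f_y(c^\natural)\Subset V$, together with $c^\natural\Subset W\Subset V$ and $f_{y_0}(W)=W\Subset V$, allows us after a preliminary homotopy rel $y_0$ to assume $f_y(W)\Subset V$ for all $y\in S^n$: a standard isotopy-extension argument continuously shrinks $f_y(W)$ into $\mathrm{Int} V$ while fixing the basepoint. Set $\diff(M)_V:=\{h\in\diff(M):h(W)\Subset V\}$, an open subset of $\diff(M)$. Isotopy extension produces a locally trivial fibration $\rho\colon \diff(M)_V\to\emb(W,\mathrm{Int} V)$ with fiber $\diff(N,\text{rel}\,\partial N)$, where $N=M\setminus\mathrm{Int} W$. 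Since $N$ is Haken and Seifert-fibered over a disk with two exceptional fibers, Hatcher--Ivanov give $\diff(N,\text{rel}\,\partial N)\simeq \text{point}$, so $\rho$ is a weak homotopy equivalence and $\diff(M)_V\simeq S^1\times S^1$. Denote by $\psi$ a loop in $\diff(M)_V$ whose $\rho$-image generates the right factor of $\pi_1(\emb(W,\mathrm{Int} V))\cong\mathbf{Z}\oplus\mathbf{Z}$, while $\varphi$ covers the left-factor generator $\mathbf{Z}_V$.

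For part~(ii) with $n\neq 1$, since $\pi_n(S^1\times S^1)=0$, the map $f\colon S^n\to\diff(M)_V$ is null-homotopic in $\diff(M)_V$, providing an extension $F\colon B^{n+1}\to\diff(M)_V\subset\diff(M)$; the condition $F_z(c^\natural)\Subset V$ holds automatically because $F_z(W)\Subset V$ and $c^\natural\Subset W$.

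For part~(i) with $n=1$, write $[\tilde f]=(m,\ell)\in\mathbf{Z}_V\oplus\mathbf{Z}$ and use that $\rho$ is an equivalence to obtain $[f]=m[\varphi]+\ell[\psi]$ in $\pi_1(\diff(M)_V)$. The key claim is that $\psi$ is null-homotopic in the ambient $\diff(M)$: by choosing a representative of $\psi$ supported in $V$, we view $\psi$ as a loop in $\diff(V,\text{rel}\,\partial V)$, and the latter is contractible by Hatcher's solid-torus theorem. Composing the null-homotopy of $\psi^\ell$ in $\diff(M)$ with the decomposition of $f$ yields $f\simeq\varphi^m$ rel $y_0$ as maps $S^1\to\diff(M)$. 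For the contractibility addendum, if moreover $f$ is null-homotopic in $\diff(M)$, we combine this null-homotopy with the nested, core-preserving solid tori produced by Lemma~\ref{Cell} (applied to the extending disk of diffeomorphisms) to modify the extension so that $F_z(c^\natural)\Subset V$ holds throughout $B^2$, giving the required $F\colon B^2\to\diff(M)$.

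The principal obstacle is establishing that $\psi$ is null-homotopic in $\diff(M)$: one must arrange the loop to have support inside $V$ and invoke the homotopy type of $\diff(V,\text{rel}\,\partial V)$, since the right-factor loop does not \emph{a priori} admit a controlled extension over a disk in $\diff(M)_V$ itself. A related subtlety is the contractibility addendum, where one must produce an extension $F\colon B^2\to\diff(M)$ satisfying the core-preservation condition without the freedom to remain inside $\diff(M)_V$; this is precisely what the nested-tori output of Lemma~\ref{Cell} is designed to achieve.
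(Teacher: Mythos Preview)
Your overall framework---setting up the restriction fibration $\rho\colon\diff(M)_V\to\emb(W,\mathrm{Int}\,V)$ with fiber $\diff(N,\mathrm{rel}\,\partial N)$, using Hatcher--Ivanov to see the fiber is contractible, and concluding $\diff(M)_V\simeq S^1\times S^1$---is sound, and your argument that the right-factor loop $\psi$ can be represented with support in $V$ and is therefore null-homotopic in $\diff(M)$ via the contractibility of $\diff(V,\mathrm{rel}\,\partial V)$ is a clean alternative to the paper's route. The paper instead straightens $f$ by the Palais--Cerf covering isotopy theorem so that $K_{(y,1)}(W)=W$, and then argues directly that the right-factor coefficient $\ell$ must vanish: were it nonzero, the trace of the loop $y\mapsto K_{(y,1)}$ at a basepoint $n_0\in\partial N$ would represent a non-central element of $\pi_1(N,n_0)$, contradicting the fact that evaluation of any loop in $\diff(M)$ lands in the center. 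Your approach lets $\ell$ be arbitrary and kills $\psi^\ell$ afterwards; both yield $f\simeq\varphi^m$ rel~$y_0$, and your treatment of part~(ii) is fine.

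The contractibility addendum, however, has a genuine gap. After your argument you have $f\simeq\varphi^m$ in $\diff(M)$ via a homotopy that does carry $c^\natural$ into $V$ throughout (the $\diff(M)_V$ stage does so by definition, and the null-homotopy of $\psi^\ell$, being supported in $V$, sends the core $c^\natural$ of $V$ to a core of $V$). What remains is to show $m=0$. Invoking Lemma~\ref{Cell} here does not accomplish this: that lemma produces a cell decomposition of $B^2$ together with \emph{many different} solid tori $V_\sigma$, one per cell, nested by incidence---it does not give an extension landing in a single prescribed $V$, and ``modifying the extension'' by its output would amount to re-running the Section~\ref{Proof} machinery, which itself relies on the addendum you are trying to prove. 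The correct and essentially one-line fix is the trace homomorphism $\alpha\colon\pi_1(\diff(M))\to Z(\pi_1(M))\cong\mathbf{Z}$, $[g]\mapsto[y\mapsto g_y(x_0)]$: since $\alpha([\varphi^m])=m$ and $[f]=0$ by hypothesis, $m=0$. Then $\varphi^0$ is constant and your core-preserving homotopy $f\simeq\varphi^0$ is already the desired extension~$F$.
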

\begin{proof}
Let $W$ be a solid torus with $c^\natural\Subset W\Subset V$, sufficiently
slim so that $f_y(W)\Subset V$ for any $y\in S^n$.  When $n=0$, it is not
hard to construct a homotopy $F\colon[0,1]\longrightarrow \diff(M)$ such
that $F_0=f_{y_0}$, $F_1=f_{y_1}$ and $F_t(c^\natural)\Subset V$ for any
$t\in [0,1]$, where $S^0=\{y_0,y_1\}$.  In fact, there exists an extension
$F_{[0,1/2]\cup \{1\}}$ of $f_{y_0}$ and $f_{y_1}$ with
$F_t(c^\natural)\Subset V$ for any $t\in [0,1/2]$ and $F_{1/2}|_W=F_1|_W$.
Since the Seifert fibration on $N=M\setminus \mathrm{Int} W$ has
a base orbifold with a disk as its underlying space and with two
exceptional fibers, $N$ has a unique essential annulus up to
ambient isotopy. This implies that $F_{1/2}|_{N}$ is isotopic to
$F_1|_{N}$, and consequently there is an extension $F_{[0,1]}$ of
$F_{[0,1/2]\cup \{1\}}$ with $F_{[0,1]}(c^\natural)\subset \mathrm{Int} V$.

Suppose now that $n\geq 1$.  As in the proof of \cite[p.\ 146, Claim]{ga2}
(using the Palais-Cerf covering isotopy theorem), there exists a continuous
map $K\colon S^n\times [0,1] \longrightarrow \diff(M)$ satisfying the
following conditions.
\begin{itemize}
\item
$K_{(y,0)}=f_y$ for any $y\in S^n$ and $K_{(y_0,t)}=\mathrm{Id}_M$ for any
$t\in [0,1]$.
\item
$K_{(y,t)}(W)\Subset V$ for any $(y,t)\in S^n\times [0,1]$.
\item
$K_{(y,1)}$ $(y\in S^n)$ fixes $W$ as a set.  Moreover, when $n=1$,
$K_{(y,1)}$ $(y\in S^1)$ defines an $S^1$-action on $\partial W$
preserving $\mathcal{F}|_{\partial W}$.
\end{itemize}

Consider first the case of $n=1$. If the element of
$\pi_1(\emb(W,\mathrm{Int} V))$ represented by $K_{(y,1)}$ $(y\in S^1)$
were not contained in $\mathbf{Z}_V$, then the restriction of
$K_{(y,1)}|_N$ to a basepoint $n_0\in \partial N$ for $y\in S^1$ would not
lie in the subgroup of $\pi_1(N,n_0)$ generated by a nonsingular fiber,
contradicting the fact that the restriction of a circular homotopy to any
basepoint must represent a central element of the fundamental group.
So we may choose the homotopy $K$ to satisfy
$K_{(y,1)}|_W=\varphi_y^m|_W$ $(y\in S^1)$ for some $m\in
\mathbf{Z}$. 

From Hatcher \cite{ha1}, the subspace 
of $\diff(M)$ consisting of diffeomorphisms $g$ with
$g|_W=\mathrm{Id}_M|_W$ is contractible. Since $K_{(y_0,1)}\circ
\varphi_{y_0}^{-m}=\mathrm{Id}_M$, it follows that $K_{(y,1)}\circ
(\varphi^{-m})_y$ $(y\in S^1)$ is contractible in $\diff(M)$ and hence $f$
is homotopic to $\varphi^m$ rel.\ $y_0$ in $\diff(M)$. This proves the
first part of~(i).

Assume now that $f$ is contractible, and fix a basepoint $x_0$ in $M$. The
trace homomorphism
\[\alpha\colon \pi_1(\diff(M))\longrightarrow Z(\pi_1(M))\cong \mathbf{Z}\]
is defined by putting, for any $g\colon S^1\longrightarrow \diff(M)$ with
$g_{y_0}=\mathrm{Id}_M$, $\alpha([g])$ equal to the element represented by
the loop $g_y(x_0)$ $(y\in S^1)$ in $M$. In particular, $\alpha$ maps the
class represented by $\varphi^m$ to $m\in \mathbf{Z}$. Since $f$ is
contractible, $m=0$. Regard $B^2$ as obtained from $S^1\times [0,1]$ by
shrinking $S^1\times \{1\}$ to a point.  Since
$(\varphi^0)_y=\mathrm{Id}_M$ for any $y\in S^1$, $K$ induces a continuous
map $F\colon B^2 \longrightarrow \diff(M)$ with $F|_{S^1}=f$,
$F(\mathbf{0})=\mathrm{Id}_M$ and $F_z(W) \Subset V$ for any $z\in
B^2$. This proves the remainder of~(i).

Suppose now that $n>1$. Since $\pi_n(\emb(W,\mathrm{Int} V))=\{0\}$, we
may apply the argument in part (i) to $K_{(y,1)}$ itself instead of
$K_{(y,1)}\circ (\varphi^{-m})_y$, obtaining an extension $F\colon
B^{n+1}\longrightarrow \diff(M)$ of $f$ as in~(ii).
\end{proof}

\section{Proof of the Main Theorem}\label{Proof}

As noted in Section~\ref{sec:sketch}, we may assume that $M$ is non-Haken,
and it suffices to prove that
$\pi_n(\diff(M))\cong \pi_n(S^1)$ for all $n\geq 1$. We first
examine~$n=1$.
\begin{lemma}\label{n=1}
Any continuous map $f\colon S^1\longrightarrow \diff(M)$ with
$f_{y_0}=\mathrm{Id}_M$ is homotopic to $\varphi^m$ rel.\ $y_0$ for some
$m\in \mathbf{Z}$.
\end{lemma}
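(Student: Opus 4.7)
The plan is to apply Lemma~\ref{Cell} to produce a cell structure on $B^2$ adapted to $f$, then perform a maximal-tree homotopy to bring $f$ into the hypothesis of Lemma~\ref{Embed}(i), which then gives the conclusion. First I would apply Lemma~\ref{Cell} with $n=1$ to $f\colon S^1 \to \diff(M)$, obtaining a cell decomposition $\Delta$ of $B^2$ and, for each $\sigma\in \Delta$, a solid torus $V_\sigma$ satisfying the nesting condition $V_\kappa \Subset V_\sigma$ for faces $\kappa$ of $\sigma$ and $f_y(c^\natural) \Subset V_\sigma$ for $y \in \sigma \cap S^1$. After subdividing if necessary, I assume the $2$-cells of $\Delta$ meeting $S^1$ form a cyclic sequence $\sigma_1,\ldots,\sigma_k$ with $y_0$ in the interior of $\kappa_1 := \overline{\sigma_1\cap S^1}$, each $\kappa_i = \overline{\sigma_i\cap S^1}$ is an arc, and adjacent $\sigma_i,\sigma_{i+1}$ share a $1$-cell meeting $S^1$ at a vertex $v_i$, with $V_{v_i}\Subset V_{\sigma_i}\cap V_{\sigma_{i+1}}$.

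Next I would carry out the maximal-tree reduction, whose goal is to produce $\widetilde f$ homotopic to $f$ rel~$y_0$ with $\widetilde f_y(c^\natural)\Subset V$ for a single fixed small solid torus neighborhood $V$ of $c^\natural$. Viewing $S^1$ as a cycle on vertices $v_0,\ldots,v_{k-1}$ with edges $\kappa_1,\ldots,\kappa_k$, pick the maximal tree $T = S^1 \setminus \mathrm{Int}(\kappa_k)$, where $\kappa_k$ is chosen not to contain $y_0$. Starting at $y_0$ and walking along $T$, at each stage I modify $f|_{\kappa_{i+1}}$ rel the already-adjusted endpoint at $v_i$ by invoking the $n=0$ case of Lemma~\ref{Embed}: any two diffeomorphisms both sending $c^\natural$ into a common solid torus are joined by a path of diffeomorphisms whose images of $c^\natural$ remain in that torus. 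The nesting at $v_i$, together with an initial choice of $V$ contained in each $V_{v_i}$ (after conjugating by ambient isotopies of $M$ if necessary), lets me shrink each ambient torus $V_{\sigma_{i+1}}$ down to $V$. Once all of $T$ has been adjusted, the two endpoints of the remaining arc $\kappa_k$ have values in $V$, so a final application of the $n=0$ case closes $\kappa_k$ with its image also in $V$.

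Having arranged $\widetilde f_y(c^\natural)\Subset V$ for every $y\in S^1$ and $\widetilde f_{y_0} = \mathrm{Id}_M$, Lemma~\ref{Embed}(i) directly yields $\widetilde f \simeq \varphi^m$ rel~$y_0$ for some $m \in \mathbf{Z}$, and composing with the homotopy from the previous step gives $f \simeq \varphi^m$ rel~$y_0$. The hard part will be the coordination in the middle step: the arc-by-arc homotopies must keep endpoint data compatible with what has already been adjusted, $V$ must be chosen once and for all so that every successive adjustment still lands inside it, and the closing edge $\kappa_k$ must be absorbed without producing any further obstruction beyond the winding already counted by $m$. The essential geometric input that makes this possible is the nesting property $V_\kappa\Subset V_\sigma$ from Lemma~\ref{Cell}, which provides at every vertex a common small solid torus into which both adjacent pieces of $f$ can be pushed.
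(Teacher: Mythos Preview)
There is a genuine gap in the middle step. Your plan hinges on the sentence ``an initial choice of $V$ contained in each $V_{v_i}$ (after conjugating by ambient isotopies of $M$ if necessary),'' but no such common $V$ need exist. The solid torus $V_{v_i}$ produced by Lemma~\ref{Cell} has $f_{v_i}(c^\natural)$ as its core, not $c^\natural$; as $v_i$ ranges over $S^1$ these cores are different (isotopic but typically disjoint) knots in $M$, so $\bigcap_i V_{v_i}$ may well be empty. Conjugating $f$ by a single element of $\diff(M)$ moves all the $V_{v_i}$ by the same diffeomorphism and therefore cannot create a common sub-torus that was not there before; using different isotopies at different vertices destroys the homotopy class of $f$. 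The nesting you cite, $V_{v_i}\Subset V_{\sigma_i}\cap V_{\sigma_{i+1}}$, is only a \emph{local} compatibility between consecutive $2$-cells and does not chain together into a global one. Consequently your walk along the maximal tree in $S^1$ cannot, at step $i+1$, invoke the $n=0$ case of Lemma~\ref{Embed} in a single fixed $V$: the adjusted endpoint $\tilde f_{v_i}(c^\natural)$ lies in your putative $V$, while $f_{v_{i+1}}(c^\natural)$ lies in $V_{v_{i+1}}$, and there is no torus guaranteed to contain both. The ``closing'' of $\kappa_k$ has the same problem and is not an $n=0$ application at all; it is a rel-endpoints homotopy question, which is essentially the lemma you are proving.

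The paper's argument avoids this by refusing to stay on $S^1$. The maximal tree $T$ lives in the full $1$-skeleton of $B^2$, and one first \emph{extends} $f$ from $S^1\setminus\mathrm{Int}(\sigma_k)$ to a map $F$ on $|T|$ using isotopy extension along interior edges, so that $F_t(c^\natural)\Subset V_\sigma$ on each edge $\sigma$ of $T$. Then for each complementary edge $\sigma_i$ ($i<k$) one has a $2$-cell $\tau_i$ with $\partial\tau_i\subset |T|\cup\sigma_1\cup\cdots\cup\sigma_i$, and the entire loop $F_{\partial\tau_i}$ lands in the \emph{single} torus $V_{\tau_i}$; this is exactly where Lemma~\ref{Embed}(i) applies, and one uses its output to define $F$ on $\sigma_i$ so that $F_{\partial\tau_i}$ is null-homotopic. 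After processing $\sigma_1,\dots,\sigma_{k-1}$ this way, every loop in $|T_{k-1}|$ is contractible, so $f$ is homotopic rel~$y_0$ to $F_{\partial\tau_k}$, which lies entirely in $V_{\tau_k}$; one final application of Lemma~\ref{Embed}(i) finishes. The $2$-cells, not a phantom global $V$, are what make the reduction possible.
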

\begin{proof}
Fix a cell decomposition $\Delta$ of $B^2$ and a map $V$ of $\Delta$
satisfying the conditions (i) and (ii) of Lemma \ref{Cell}.  Select a
maximal tree $T$ in $\Delta^{(1)}$ such that the complement
$\Delta^{(1)}\setminus T$ consists of elements $\sigma_1,\dots,\sigma_k$
with $y_0\in \sigma_k\subset S^1$, $S^1\setminus \sigma_k\subset |T|$ and
such that, for any $i=1,\dots,k$, there exists $\tau_i\in \Delta^{(2)}$
with $\sigma_i\subset \partial \tau_i\subset |T_i|:=|T|\cup
\sigma_1\cup\cdots\cup \sigma_i$, see Fig.\ \ref{f_tree}\,(a).
\begin{figure}[hbtp]
\centering
\includegraphics[width=0.85\textwidth]{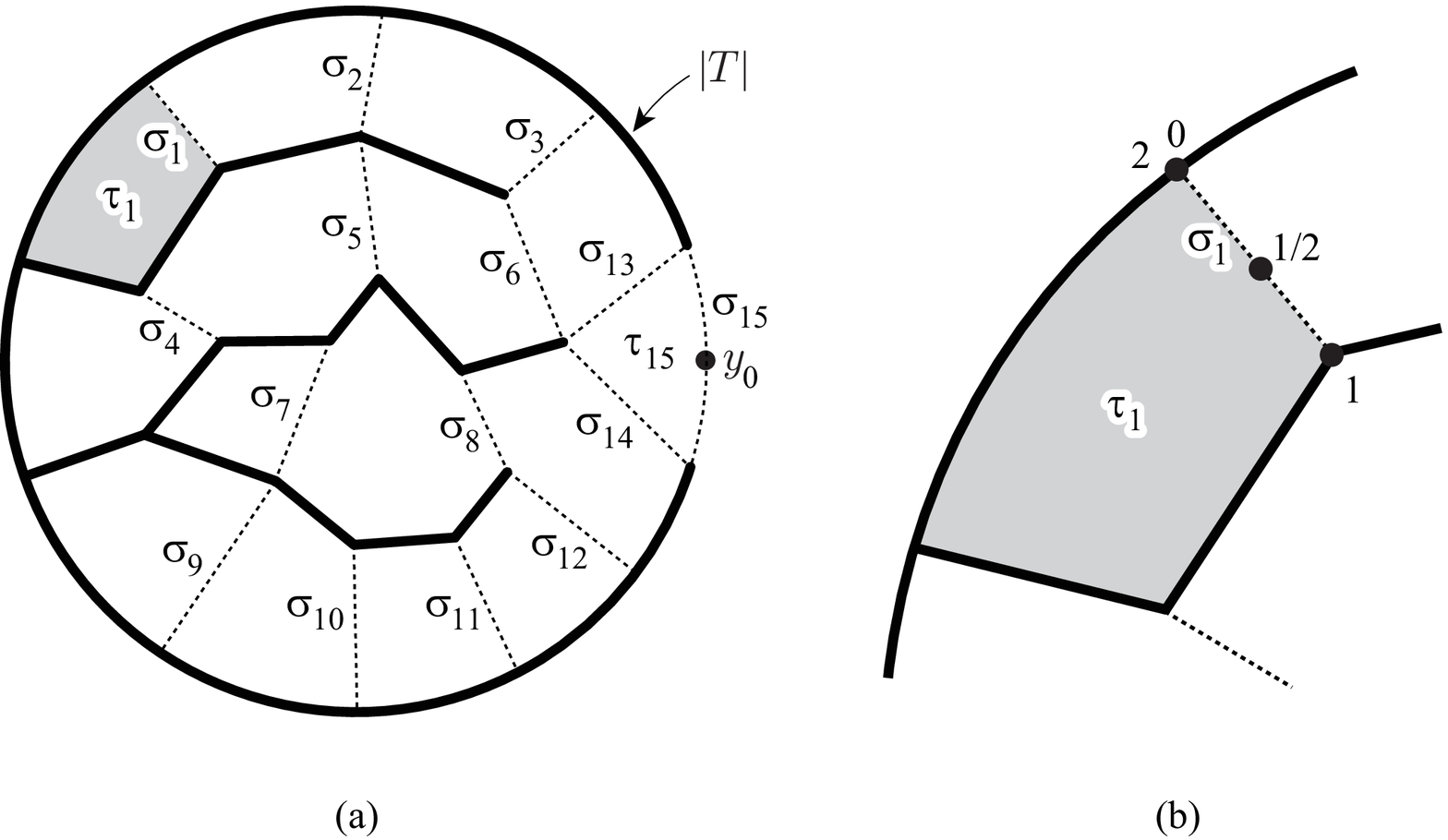}
\caption{}
\label{f_tree}
\end{figure}

For each vertex $v$ of $T|_{S^1}$, we have $f_v\in \diff(M)$ with
$f_v(c^\natural)\Subset V_v$, and for each edge $\sigma$ of $T|_{S^1}$, we
have $f_y(c^\natural)\Subset V_\sigma$ for all $y\in \sigma$. Consider an
edge $\sigma$ in $T$ having one endpoint $v$ in $S^1$ and the other
endpoint $w$ in the interior of $B^2$. Since $V_v\Subset V_\sigma$ and
$V_w\Subset V_\sigma$, we can obtain by isotopy extension a map
$F_\sigma\colon \sigma\to \diff(M)$ with $F_v=f_v$, $F_t(c^\natural)\Subset
V_\sigma$ for $t\in\sigma$, and $F_w(c^\natural)\Subset V_w$. Inducting on
the distance from $|T|\cap S^1$, we have $F_{|T|}\colon |T|\to \diff(M)$ such that
$F_v(c^\natural)\Subset V_v$ for each vertex of $T$ and
$F_t(c^\natural)\Subset V_\sigma$ for each $t$ in each edge $\sigma$ of~$T$.

Now parameterize $\sigma_1$ and $\partial \tau_1\setminus \mathrm{Int}
\sigma_1$ respectively by $[0,1]$ and $[1,2]$ so that `$0=2$' in $\partial
\tau_1$, as in Fig.\ \ref{f_tree}\,(b). We have $F_0(c^\natural)\Subset
V_{\sigma_1}$ and $F_1(c^\natural)\Subset V_{\sigma_1}$, and it follows
that there is an extension of $F_1$ to $F_{[1/2,1]}$, such that
$F_{1/2}=F_0$ and $F_t(c^\natural)\Subset V_{\sigma_1} \Subset V_{\tau_1}$
for any $t\in [1/2,1]$.

Applying Lemma \ref{Embed}\,(i) to $F_0^{-1}\circ F_t$ $(1/2\leq t\leq 2)$
and $V:= F_0^{-1}(V_{\tau_1})$, we have $j\in \mathbf{Z}$ such that the
loop product of $(\varphi^j)_{2t}$ $(t\in [0,1/2])$ and $F_0^{-1}\circ F_t$
$(t\in [1/2,2])$ is contractible in $\diff(M)$, where the domain $S^1$ of
$\varphi^j$ is supposed to be the quotient space obtained from $[0,1]$ by
identifying $0$ with $1$ and regarding the point $0\,(=1)$ as the basepoint
$y_0$ of $S^1$.  Thus the extension $F_{[0,2]}$ of $F_{[1/2,2]}$ defined by
$F_t=F_0\circ (\varphi^j)_{2t}$ $(0\leq t\leq 1/2)$ is contractible in
$\diff(M)$ and satisfies $F_t(c^\natural)\Subset V_{\sigma_1}$ for any $t\in
[0,1]$.

So far, $f_{|T|\cap S^1}$ has been extended to $F_{|T_j|}$ satisfying the following 
conditions.
\begin{enumerate}[(a)]
\item
$F_t(c^\natural)\Subset V_{\sigma_i}$ whenever $t\in \sigma_i$ 
for $i=1,\dots, j$.
\item
For any simple loop $\lambda$ in $|T_j|$, the restriction $F_\lambda$ is contractible in $\mathrm{diff}(M)$. 
\end{enumerate}
Repeating the argument, we obtain an extension
$F_{|T_{k-1}|}$ satisfying (a) and (b). Using $f$ on $\sigma_k$, we extend
$F_{|T_{k-1}|}$ to $F_{|T_{k}|}$ satisfying (a).

By the condition (b) for $j=k-1$, for any simple loop $\lambda$ in $|T_{k-1}|$, $F_\lambda$ is 
contractible.
Therefore the original $f$ is homotopic
rel.\ $y_0$ to the loop $F_{\partial \tau_k}$. Since
$F_t(c^\natural)\Subset V_{\sigma_i}\Subset V_{\tau_k}$ for each $t\in \sigma_i\subset \partial \tau_k$,
Lemma~\ref{Embed}(i) shows that 
$F_{\partial \tau_k}$ is homotopic rel.\ $y_0$ to
$\varphi^m$ for some $m\in\textbf{Z}$.
\end{proof}

\begin{proof}[Proof of the Main Theorem]
In Lemma~\ref{Embed} we defined the trace homomorphism $\alpha\colon
\pi_1(\diff(M))\longrightarrow Z(\pi_1(M))$. Lemma \ref{n=1} shows that
$\alpha$ is an isomorphism, that is, $\pi_1(\diff(M))\cong
\mathbf{Z}$. Moreover, the $S^1$-action which moves each point vertically
in its fiber defines a map $S^1\to \diff(M)$ which induces an isomorphism
on fundamental groups, so it remains to show that $\pi_n(\diff(M))=0$ for
$n> 1$.

Suppose that $n>1$ and let $f\colon S^n\longrightarrow \diff(M)$
be any continuous map with $f_{y_0}=\mathrm{Id}_M$. Let $\Delta$ be a cell
decomposition on $B^{n+1}$ and $V$ a map of $\Delta$ satisfying the
conditions of Lemma \ref{Cell}.
Let $T_0$ be a maximal subcomplex of $\Delta$ such that 
$|T_0|$ is simply connected and $S^n\subset |T_0|\subset S^n\cup |\Delta^{(1)}|$.
We set $\Delta^{(1)}\setminus T_0=\{\sigma_1,\dots,\sigma_k\}$ and 
$|T_i|=|T_0|\cup \sigma_1\cup\cdots\cup \sigma_i$ for $i=1,\dots,k$.
As in the proof of Lemma~\ref{n=1}, we can extend $f$ to $F_{|T_0|}$
satisfying the conditions (a) and (b) in the proof of Lemma \ref{n=1}.

Next we will extend $F_{|T_0|}$ to $\sigma_1$ so that
$F_{|T_1|}$ satisfies (a) and (b).
Let $v,w$ be the endpoints of $\sigma_1$.
Fix an arc $\alpha$ in $|T_0|$
from $w$ to $v$. As in the proof of Lemma~\ref{n=1}, parameterize $\sigma$
and $\alpha$ as $[0,1]$ and $[1,2]$ so that $v=0=2$, and extend
$F_{|T_0|}$ to $[1/2,1]$ so that $F_0=F_{1/2}$. Since $F_0(c^\natural)\Subset
V_v\Subset V_{\sigma_1}$, Lemma~\ref{n=1} implies that $F_{[1/2,2]}$ is
homotopic relative to $\{1/2,2\}$ to a path in $\diff(M)$ with 
$F_t(c^\natural)\Subset V_{\sigma_1}$ at each time. Using the reverse of this
path on $[0,1/2]$ gives an extension of $F_{|T_0|}$ to $F_{|T_1|}$ such that 
$F_{\sigma_1\cup \alpha}$ is a null-homotopic loop.
Since the restriction of $F_{|T_0|}$ to any loop in $|T_0|$ is contractible, 
this implies that 
$F_{\lambda_1}$ is also contractible for any loop $\lambda_1$ in $|T_1|$.

Repeating this process on $\sigma_i$ $(i=2,\dots,k)$, we obtain an
extension $F_{|T_k|}=F_{|\Delta^{(1)}|\cup S^n}$ satisfying (a) and (b).
In particular, its restriction to the boundary of any $2$-cell in $\Delta$
is null-homotopic. So Lemma~\ref{Embed}(i) implies that
$F_{|\Delta^{(1)}|\cup S^n}$ extends to $F_{|\Delta^{(2)}|\cup S^n}$,
satisfying $F_z(c^\natural)\Subset V_\tau$ for any $z$ in each $\tau\in
\Delta^{(2)}$. Then, by applying Lemma~\ref{Embed}\,(ii) repeatedly on the
higher skeleta of $\Delta$, one can extend $F_{|\Delta^{(2)}|\cup S^n}$ to
all of $|\Delta^{(n+1)}|=B^{n+1}$. It follows that $f\colon
S^n\longrightarrow \diff(M)$ is contractible and hence $\pi_n(\diff(M))=0$.
\end{proof}

\section{Deforming homotopy equivalences to diffeomorphisms}\label{sec:threecone}

The fiber-preserving diffeomorphisms of Seifert-fibered $3$-manifolds are
well-understood, see for example Section~1 of Neumann and Raymond
\cite{NR}. Apart from a few simple exceptions, Seifert fiberings of
Seifert-fibered $3$-manifolds with infinite fundamental group are unique up
to isotopy (see Lemma~2.1 and Corollary~2.3 of~\cite{oh}), and consequently
any diffeomorphism is isotopic to a fiber-preserving one.

It is also true that when $M$ is a closed Seifert-fibered $3$-manifold and
$\pi_1(M)$ is infinite, any homotopy equivalence from $M$ to $M$ is
homotopic to a diffeomorphism. This is certainly well-known in the Haken
case, by Waldhausen's celebrated results~\cite{wa2}. For the non-Haken
cases, it is folk knowledge, but we are not aware of a published proof. For
the work of this paper, we actually need only the case of hyperbolic base
orbifold, but it is appropriate to include a proof of the Euclidean base
orbifold case in order that all of our applications will also extend if our
Main Theorem can be established in the infranilmanifold case (the only
explicit invocation of the Main Theorem is in the proof of
Theorem~\ref{thm:fiber-preserving}). K.-B.~Lee has shown us a more general
proof using the theory of Seifert fiberings, but we include here an
elementary and nearly self-contained argument for the cases we need. It
requires some notational preliminaries, but they are needed for our later
work anyway.

For the remainder of this section, we assume that $M$ is Seifert-fibered
over an orbifold $O$ which is the $2$-sphere with exactly three cone
points, and that $\pi_1(M)$ is infinite. To set notation, we recall a
standard description of a Seifert-fibered structure on $M$. Remove from $O
$ the interiors of three disjoint disks, each containing one of the cone
points, to obtain a disk-with-two-holes $F$. Then $\pi_1(F)=\langle
Q_1,Q_2,Q_3\;|\; Q_1Q_2Q_3=1\rangle$, with the three boundary circles
representing the $Q_i$. Form $F\times S^1$, writing $\pi_1(S^1)=\langle
T\rangle$ and $\pi_1(F\times S^1)=\pi_1(F)\times \langle T\rangle$. To the
boundary tori, use fiber-preserving diffeomorphisms to attach suitably
Seifert-fibered solid tori, each containing an exceptional fiber, so that
the meridian curves represent $Q_i^{\alpha_i}T^{\beta_i}$, $1\leq i\leq
3$. The pairs of relatively prime integers $(\alpha_i,\beta_i)$ with
$\alpha_i\geq 2$ are called the (unnormalized) Seifert
invariants. Different choices of $\beta_i$ can yield the same (up to
orientation-preserving diffeomorphism) topological fibering, but all
choices are congruent modulo $\alpha_i$.

From the construction, we obtain the presentation
\[\pi_1(M) = \langle q_1,q_2,q_3,t\;|\;tq_it^{-1}=q_i, \;
q_i^{\alpha_i}t^{\beta_i}=1,\; 1\leq i\leq 3,\; q_1q_2q_3=1\rangle\ ,\]
where the principal fiber represents the element $t$ which generates the
center $C$ of $\pi_1(M)$. Putting $t=1$ gives the quotient
\[\pi^{\,\mathrm{orb}}_1(O) = \langle q_1,q_2,q_3\;|\;q_i^{\alpha_i}=1,\; 1\leq i\leq 3,\; q_1q_2q_3=1\rangle\ .\]

Since $M$ is aspherical, our next result implies that any homotopy
equivalence from $M$ to $M$ is homotopic to a diffeomorphism.
\begin{prop}\label{prop:strong_rigidity}
Suppose that $M$ is Seifert-fibered over an orbifold $O$ which
has three cone points and underlying manifold the $2$-sphere, and that
$\pi_1(M)$ is infinite. Let $\theta$ be an automorphism of $\pi_1(M)$. Then
there exists an orientation-preserving fiber-preserving diffeomorphism of
$M$ whose induced automorphism on $\pi_1(M)$ equals $\theta$
in~$\Out(\pi_1(M))$.
\end{prop}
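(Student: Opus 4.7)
The plan is to reduce $\theta$ modulo the center, put the induced automorphism of the triangle group $\pi_1^{\,\mathrm{orb}}(O)$ into a short normal form, realize that normal form by an orbifold diffeomorphism of $O$, lift to a fiber-preserving diffeomorphism of $M$, and absorb any residual discrepancy with a vertical fiber-direction adjustment.

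First, since $C = \langle t \rangle$ is the center of $\pi_1(M)$ it is characteristic, so $\theta(C) = C$ and $\theta(t) = t^\epsilon$ with $\epsilon \in \{\pm 1\}$. Consequently $\theta$ descends to an automorphism $\bar\theta$ of the triangle group $\pi_1^{\,\mathrm{orb}}(O)$. The three conjugacy classes of maximal finite cyclic subgroups in this group are precisely $\langle q_1\rangle$, $\langle q_2\rangle$, $\langle q_3\rangle$, so $\bar\theta$ permutes them by some $\sigma \in S_3$ with $\alpha_{\sigma(i)} = \alpha_i$, and after composing with an inner automorphism I may assume $\bar\theta(q_i) = q_{\sigma(i)}^{e_i}$ for units $e_i \bmod \alpha_i$. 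The relation $q_1 q_2 q_3 = 1$ then cuts the admissible $(\sigma, (e_i))$ down to an explicit finite list.

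Next I would realize each such normal form by an orbifold self-diffeomorphism $\phi \colon O \to O$. Present $O = S^2(\alpha_1, \alpha_2, \alpha_3)$ as the double of a triangle with angles $\pi/\alpha_i$ (hyperbolic or Euclidean, according as $\sum 1/\alpha_i < 1$ or $= 1$); reflections across the sides and, when some of the $\alpha_i$ coincide, rotations permuting equal-angle vertices supply orbifold isometries inducing every admissible $(\sigma, (e_i))$, with orientation behavior matching the required $\epsilon$. Then I would lift $\phi$ to a fiber-preserving diffeomorphism $\tilde\phi$ of $M$ acting on regular fibers by $t \mapsto t^\epsilon$. Over the complement $F$ of small orbifold disks at the cone points the bundle is trivial and the lift is immediate; extension over the Seifert solid tori succeeds because $\alpha_{\sigma(i)} = \alpha_i$, and the $\beta_i$ can first be matched modulo $\alpha_i$ by precomposing $\phi$ with fiber-preserving Dehn twists along curves in $F$ parallel to $\partial F$, which shift the $\beta_i$ freely.

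By construction $\tilde\phi_*$ and $\theta$ agree both in the quotient $\pi_1^{\,\mathrm{orb}}(O)$ and on $\langle t \rangle$, so $\tilde\phi_* \circ \theta^{-1}$ sends each $q_i$ to $t^{n_i} q_i$ for integers $n_i$ constrained by the relations $q_i^{\alpha_i} t^{\beta_i} = 1$ and $q_1 q_2 q_3 = 1$. Any such automorphism is realized by a fiber-preserving diffeomorphism built from a local vertical $S^1$-rotation near each exceptional fiber together with a compensating rotation on $F \times S^1$, and postcomposing with it makes the two automorphisms equal in $\Out(\pi_1(M))$. I expect the main obstacle to be the combinatorial bookkeeping in the second paragraph — matching each algebraic normal form $(\sigma, (e_i))$ to an actual geometric symmetry of $O$ — together with the Seifert-invariant congruence needed for the lifting; once these are in hand, the remaining corrections are routine manipulations on Seifert fibered $3$-manifolds.
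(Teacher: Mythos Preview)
Your plan follows the paper's strategy---descend to $\pi_1^{\,\mathrm{orb}}(O)$, realize $\bar\theta$ by an orbifold diffeomorphism $\phi$, lift, and adjust with vertical Dehn twists---but it skips the one step that carries the real content. You assert that the isometry of $O$ realizing $\bar\theta$ has ``orientation behavior matching the required $\epsilon$,'' where $\theta(t)=t^\epsilon$, and that the lift then extends over the solid tori once the $\beta_i$ are ``matched modulo $\alpha_i$'' by Dehn twists. But the sign $\epsilon$ is determined by $\theta|_C$, while the orientation of $\phi$ is determined by $\bar\theta$ alone, and nothing you have written forces them to agree; moreover vertical Dehn twists over curves in $F$ cannot shift the $\beta_i$ freely---they preserve the Euler number $-\sum\beta_i/\alpha_i$. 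If $\theta(t)=t$ but $\bar\theta$ were realized only by an orientation-reversing $\phi$, the meridian-matching condition over the $i$-th solid torus would read $\beta_{\sigma(i)}+\beta_i\equiv 0\pmod{\alpha_i}$, and no Dehn twist repairs this. The paper confronts exactly this point: assuming $\theta(t)=t$, it writes $\theta(q_i)=\gamma_i q_{\sigma(i)}^\epsilon\gamma_i^{-1}t^{n_i}$, extracts $n_1+n_2+n_3=0$ from $q_1q_2q_3=1$ and $\epsilon\beta_{\sigma(i)}-\beta_i=n_i\alpha_i$ from $q_i^{\alpha_i}t^{\beta_i}=1$, and then shows that $\epsilon=-1$ forces $\sum\beta_i/\alpha_i=0$, which a short case analysis rules out. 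That Euler-number computation is the proposition's core, and your outline does not supply it.

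Two smaller issues. Your normal form $\bar\theta(q_i)=q_{\sigma(i)}^{e_i}$, with all three images simultaneously conjugator-free, cannot in general be arranged by a single inner automorphism; the paper keeps the conjugators $\gamma_i$ explicit throughout and cites Zieschang--Vogt--Coldewey for the realization of $\bar\theta$ rather than classifying normal forms by hand. And your final ``residual discrepancy'' step is essentially vacuous: once a fiber-preserving diffeomorphism $\tilde\phi$ of $M$ already satisfies $\tilde\phi_*(t)=\theta(t)$ and induces $\bar\theta$ on the quotient, applying both automorphisms to $q_i^{\alpha_i}t^{\beta_i}=1$ gives $n_i\alpha_i=0$, so $\theta=\tilde\phi_*$ on the nose. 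The genuine adjustment (and the use of $n_1+n_2+n_3=0$) must happen while building the extension over the solid tori, not afterward.
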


\begin{proof}
Since $C$ is the center of $\pi_1(M)$, there is a commutative diagram
\[
\begin{CD}
1 @>>> C @>>> \pi @>>> \pi_1^{\,\mathrm{orb}}(O) @>>> 1\\
@VVV @V{\theta|_C}VV  @VV{\theta}V @V{\overline{\theta}}VV @VVV\\
1 @>>> C @>>> \pi @>>> \pi_1^{\,\mathrm{orb}}(O) @>>> 1\\
\end{CD}
\] 
where the vertical maps are automorphisms. Theorem~5.8.3 of \cite{ZVC},
stated in our language, says that there is an orbifold diffeomorphism
$g^{\,\mathrm{orb}}\colon O\to O$ that induces $\overline{\theta}$
on $\pi_1^{\,\mathrm{orb}}(O)$. We may assume that $g^{\,\mathrm{orb}}(F)=F$, and we write
$g\colon F\to F$ for the restriction of $g^{\,\mathrm{orb}}$.

Since $g$ is a diffeomorphism, we have
$g_\#(Q_i)=\Gamma_iQ_{\sigma(i)}^\epsilon\Gamma_i^{-1}$ for some elements
$\Gamma_i\in \pi_1(F)$, some permutation $\sigma$ of $\{1,2,3\}$, and
$\epsilon=1$ or $\epsilon=-1$ according as $g$ preserves or reverses
orientation. Since $\overline{\theta}=g^{\,\mathrm{orb}}_\#$, we can write
$\theta(q_i)=\gamma_iq_{\sigma(i)}^\epsilon\gamma_i^{-1}t^{n_i}$ for some
integers $n_i$, where $\gamma_i$ is obtained from $\Gamma_i$ by replacing
each $Q_i$ by~$q_i$.

We claim that $n_1+n_2+n_3=0$. We have in $\pi_1(F)$ that
\[1 =g_\#(Q_1Q_2Q_3)
= \Gamma_1Q_{\sigma(1)}^{\epsilon}\Gamma_1^{-1}\;
\Gamma_2Q_{\sigma(2)}^{\epsilon}\Gamma_2^{-1}\;
\Gamma_3Q_{\sigma(3)}^{\epsilon}\Gamma_3^{-1}\ .\] Since the latter word is
trivial in $\pi_1(F)$, it is freely equivalent to a product of conjugates of
$Q_1Q_2Q_3$ and $(Q_1Q_2Q_3)^{-1}$. Therefore the corresponding element
$\gamma_1q_1^{\epsilon}\gamma_1^{-1}
\gamma_2q_{\sigma(2)}^{\epsilon}\gamma_2^{-1}
\gamma_3q_{\sigma(3)}^{\epsilon}\gamma_3^{-1}$ in $\pi_1(M)$
is freely equivalent to a
product of conjugates of $q_1q_2q_3$ and $(q_1q_2q_3)^{-1}$.
Since the relation $q_1q_2q_3=1$ holds in $\pi_1(M)$, this word is trivial in
$\pi_1(M)$ and we have
\[1 =\theta(q_1q_2q_3)
= \gamma_1q_1^{\epsilon}\gamma_1^{-1}
\gamma_2q_{\sigma(2)}^{\epsilon}\gamma_2^{-1}
\gamma_3q_{\sigma(3)}^{\epsilon}\gamma_3^{-1}t^{n_1+n_2+n_3}
=t^{n_1+n_2+n_3}\ .\]
Since $C$ is infinite, this shows that $n_1+n_2+n_3=0$.

Assume for now that $\theta(t)=t$. We have
\[
t^{-\beta_i}=\theta(t^{-\beta_i})=\theta(q_i^{\alpha_i})=\gamma_i
q_{\sigma(i)}^{\epsilon\alpha_i}\gamma_i^{-1}t^{n_i\alpha_i}\ .\]
This implies that $Q_{\sigma(i)}^{\alpha_i}=1$ in $\pi_1^{\,\mathrm{orb}}(O)$, so
$\alpha_{\sigma(i)}$ divides $\alpha_i$. Since this is true for all $i$, we
have $\alpha_{\sigma(i)}=\alpha_i$. Therefore 
\[t^{-\beta_i}=
\gamma_it^{-\epsilon\beta_{\sigma(i)}} \gamma_i^{-1}t^{n_i\alpha_i}
=t^{-\epsilon\beta_{\sigma(i)}+n_i\alpha_i}\ ,\] so
$\epsilon\beta_{\sigma(i)}-\beta_i=n_i\alpha_i$. 

Suppose for contradiction that $\epsilon=-1$. Then
$\beta_{\sigma(i)}+\beta_i=-n_i\alpha_i$, and since
$\alpha_{\sigma(i)}=\alpha_i$ we have
$\beta_{\sigma(i)}/\alpha_{\sigma(i)}+\beta_i/\alpha_i=-n_i$.  Summing this
for $1\leq i\leq 3$ and using $n_1+n_2+n_3=0$ gives $\sum
\frac{\beta_i}{\alpha_i}=0$ (if we already knew that $\theta$ arose from a
fiber-preserving diffeomorphism, then this would amount to the fact that
when a Seifert-fibered $3$-manifold has an orientation-reversing
fiber-preserving diffeomorphism, the Euler number of its Seifert fibration
is $0$). If all $\alpha_i=2$, this is impossible, so we assume that
$\alpha_1\leq \alpha_2\leq \alpha_3$ with $\alpha_3\geq 3$. Since
$\beta_{\sigma(3)}/\alpha_{\sigma(3)}+\beta_3/\alpha_3$ is an integer,
$\sigma(3)\neq 3$ and we may assume that $\sigma(3)=2$ and
$\alpha_2=\alpha_3$. But then,
\[-\frac{\beta_1}{\alpha_1}=
\frac{\beta_2}{\alpha_2}+\frac{\beta_3}{\alpha_3}\]
would be an integer, a contradiction.

Let $T_1$, $T_2$, and $T_3$ be the boundary tori of $F\times S^1$, and fix
disjoint vertical annuli $A_1$ and $A_2$ connecting $T_3$ to $T_1$ and
$T_2$ respectively. Since $n_1+n_2+n_3=0$, there is a product $j$ of
fiber-preserving Dehn twists in a neighborhood of $A_1\cup A_2$ such that
$j_\#(Q_{\sigma(i)})=Q_{\sigma(i)}T^{n_i}$ for each $i$. Let $h=j\circ
(g\times 1_{S^1})$, a fiber-preserving diffeomorphism of $F\times S^1$. In
$\pi_1(F\times S^1)$ we have $h_\#(T)=T$ and
$h_\#(Q_i)=\Gamma_iQ_{\sigma(i)}\Gamma_i^{-1}T^{n_i}$. Using
$\beta_{\sigma(i)}-\beta_i=n_i\alpha_i$, we have
$h(Q_i^{\alpha_i}T^{\beta_i})
=\Gamma_iQ_{\sigma(i)}^{\alpha_{\sigma(i)}}\Gamma_i^{-1}T^{n_i\alpha_i}T^{\beta_i}
=\Gamma_iQ_{\sigma(i)}^{\alpha_{\sigma(i)}}T^{\beta_{\sigma(i)}}\Gamma_i^{-1}$. That
is, $h$ takes meridian curves in the boundaries of the fibered solid tori
of $\overline{M-F\times S^1}$ to meridian curves. Therefore $h$ extends to
a fiber-preserving diffeomorphism of $M$ inducing~$\theta$. Since
$\epsilon$ is $1$, $g$ and therefore $h$ are orientation-preserving.

Suppose now that $\theta(t)=t^{-1}$. There is an orientation-preserving
fiber-preserving diffeomorphism $\tau$ of $M$ that reverses the direction
of the fiber; on $O$ it induces a reflection through a circle
containing the three cone points, and on each of the three fibered solid
tori it is a hyperelliptic involution. Since $\tau_\#\theta(t)=t$, the
previous case gives an orientation-preserving fiber-preserving
diffeomorphism $h$ such that $\tau_\#\theta=h_\#$ and hence
$\theta=(\tau^{-1}\circ h)_\#$ in $\Out(\pi_1(M))$.
\end{proof}

The following immediate corollary
can also be proven by consideration of Euler numbers.
\begin{cor}\label{cor:threecone_op}
Suppose that $M$ is Seifert-fibered over an orbifold $O$ which
has three cone points and underlying manifold the $2$-sphere, and that
$\pi_1(M)$ is infinite. Then every diffeomorphism of $M$ is
orientation-preserving.
\end{cor}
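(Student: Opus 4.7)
The plan is to combine Proposition~\ref{prop:strong_rigidity} with the fact that $M$ is aspherical. Given an arbitrary diffeomorphism $f\colon M\to M$, set $\theta=f_\#$, which is an automorphism of $\pi_1(M)$. Proposition~\ref{prop:strong_rigidity} applied to $\theta$ produces an orientation-preserving fiber-preserving diffeomorphism $h\colon M\to M$ such that $h_\#=\theta$ in $\Out(\pi_1(M))$; that is, $h_\#$ and $f_\#$ differ by an inner automorphism of $\pi_1(M)$.

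The next step is to promote this equality-in-$\Out(\pi_1(M))$ to a free homotopy $f\simeq h$, for which I need $M$ to be aspherical. Since $M$ is Seifert fibered over a $2$-sphere with three cone points and $\pi_1(M)$ is infinite, the base orbifold cannot be spherical (that would force $\pi_1(M)$ finite), so it is either hyperbolic or Euclidean. In every such case $M$ is modeled on one of the four geometries $\H^2\times\R$, $\widetilde{\mathrm{SL}_2}(\R)$, $\R^3$, or $\mathrm{Nil}$, each of whose model has universal cover diffeomorphic to $\R^3$; hence $M$ is a closed aspherical $3$-manifold. For such a space the set of free homotopy classes $[M,M]$ is in bijection with $\mathrm{Hom}(\pi_1(M),\pi_1(M))/\text{inner}$, and in particular $f$ and $h$ are freely homotopic. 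Since the degree is a homotopy invariant, $\deg(f)=\deg(h)=+1$, so $f$ is orientation-preserving.

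I do not expect a serious obstacle, since Proposition~\ref{prop:strong_rigidity} does essentially all of the work. The one point to be mindful of is that the proposition only yields $h_\#=f_\#$ in $\Out(\pi_1(M))$, not as automorphisms on the nose, so we must invoke asphericity to convert this into a homotopy. The alternative Euler-number argument alluded to in the statement presumably proceeds by noting that reversing the orientation of $M$ negates the Euler number $e(M)$ of the fibration, so an orientation-reversing diffeomorphism forces $e(M)=0$; this rules out orientation reversal whenever $e(M)\neq 0$ (i.e.\ the $\widetilde{\mathrm{SL}_2}(\R)$ and $\mathrm{Nil}$ cases) but must be supplemented by a case-by-case analysis in the $\H^2\times\R$ and flat cases, so the homotopy argument above is the uniformly cleaner one.
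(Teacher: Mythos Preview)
Your proof is correct and follows essentially the same route as the paper: invoke Proposition~\ref{prop:strong_rigidity} to realize the outer automorphism $f_\#$ by an orientation-preserving diffeomorphism $h$, use asphericity of $M$ to conclude $f\simeq h$, and then observe that homotopic maps of a closed manifold have the same degree. The paper's version is terser (it simply asserts asphericity and that homotopy classes are detected by $\Out(\pi_1(M))$), but the content is identical.
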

\begin{proof}
Since $M$ is aspherical, two diffeomorphisms are homotopic if and only if
they induce the same outer automorphism of $\pi_1(M)$. By
Proposition~\ref{prop:strong_rigidity}, every homotopy class contains an
orientation-preserving diffeomorphism, and the corollary follows since $M$
is closed.
\end{proof}

\section{Isometries}\label{sec:realization}

Throughout this section we continue to assume that $M$ is Seifert-fibered
over an orbifold $O$ which is the $2$-sphere with exactly three cone
points, and that $\pi_1(M)$ is infinite. We also continue to use the
notation set up in the previous section. In this section we will analyze
the isometry groups of these~$M$.

It is known that $M$ admits an $\mathbb{H}^2\times \R$,
$\widetilde{\mathrm{SL}_2}(\R)$, $\mathrm{Nil}$, or Euclidean geometry such
that the fibers of $M$ are geodesics. Our reference for Seifert-fibered
$3$-manifolds and their geometries is~\cite{sc2}. Every isometry of $M$ is
fiber-preserving: In all cases except the Euclidean geometry, every
isometry of the universal cover $\widetilde{M}$ preserves the
$\mathbb{R}$-fibers, so this is immediate. For the Euclidean geometry, the
induced automorphism of any isometry of $M$ must preserve the center of
$\pi_1(M)$, so takes the central element $t$ represented by the principal
fiber to either $t$ or $t^{-1}$ in~$\pi_1(M)$. This implies that the lifted
isometry preserves the $\mathbb{R}$-fibers of~$\widetilde{M}$.

\begin{prop}\label{prop:realize}
Give $M$ its standard $\mathbb{H}^2\times \R$,
$\widetilde{\mathrm{SL}_2}(\R)$, $\mathrm{Nil}$, or Euclidean geometry. If
$\theta$ is any automorphism of $\pi_1(M)$, then there exists an isometry
of $M$ whose induced automorphism on $\pi_1(M)$ equals $\theta$
in~$\Out(\pi_1(M))$.
\end{prop}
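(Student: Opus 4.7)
The strategy is to apply Proposition~\ref{prop:strong_rigidity} to realize $\theta$ by a fiber-preserving diffeomorphism and then to upgrade this to an isometry, in two stages: first at the level of the base orbifold, then by lifting through the geometric Seifert structure. By Proposition~\ref{prop:strong_rigidity}, there is an orientation-preserving fiber-preserving diffeomorphism $h$ of $M$ inducing $\theta$ in $\Out(\pi_1(M))$. Since the center $C=\langle t\rangle$ is preserved, $h$ descends to an orbifold diffeomorphism $g^{\,\mathrm{orb}}\colon O\to O$ realizing the quotient automorphism $\overline{\theta}\in \Out(\pi^{\,\mathrm{orb}}_1(O))$.

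Next I would replace $g^{\,\mathrm{orb}}$ by an orbifold isometry $\phi\in\Isom(O)$ in its isotopy class. The mapping class group of the orbifold $O=S^2(\alpha_1,\alpha_2,\alpha_3)$ is finite, generated by permutations of cone points of equal order together with the orientation-reversing involution. In the standard geometry of $O$, each such class is realized by an orbifold isometry (a rotation for cone-permuting classes, a reflection across a geodesic containing all three cone points for orientation reversal). Thus such a $\phi$ exists with $\phi_\#=\overline\theta$.

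Third, I would lift $\phi$ to a fiber-preserving isometry $\widetilde\phi\in\Isom(M)$. In each of the four geometries, the image of $\Isom(M)\to \Isom(O)$ is the subgroup of orbifold isometries preserving the Euler class of the Seifert fibering, and the existence of the diffeomorphism $h$ covering $g^{\,\mathrm{orb}}$ ensures that $\phi$ preserves this data. The resulting $\widetilde\phi$ is unique up to a fiber rotation, which is isotopic to $\mathrm{Id}_M$ and so acts trivially in $\Out(\pi_1(M))$. Since every diffeomorphism of $M$ is orientation-preserving (Corollary~\ref{cor:threecone_op}), $\widetilde\phi$ reverses the fiber direction iff $\phi$ reverses orientation of $O$, iff $h$ reverses fiber direction, so that $\widetilde\phi_\#(t)=\theta(t)$ in $C$.

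To conclude $\widetilde\phi_\#=\theta$ in $\Out(\pi_1(M))$, it suffices that the natural map
\[
\Out(\pi_1(M))\longrightarrow \Out(\pi^{\,\mathrm{orb}}_1(O))\times \mathrm{Aut}(C)
\]
be injective, since then applying this to $\widetilde\phi_\#^{-1}\circ\theta$, which lies in the kernel, gives the desired equality. If $\theta_0$ is in the kernel, adjust by an inner automorphism so that $\theta_0(q_i)=q_it^{n_i}$ and $\theta_0(t)=t$; applying $\theta_0$ to the relation $q_i^{\alpha_i}t^{\beta_i}=1$ yields $t^{n_i\alpha_i}=1$, forcing $n_i=0$ because $t$ has infinite order, so $\theta_0=\mathrm{Id}$. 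The main obstacle will be the lifting step: checking in the twisted geometries $\widetilde{\mathrm{SL}_2}(\R)$ and $\mathrm{Nil}$ that the chosen orbifold isometry $\phi$ lifts to an honest isometry of $M$, that is, that its geometric lift to the universal cover normalizes the lattice~$\pi_1(M)$, rather than merely to a fiber-preserving diffeomorphism.
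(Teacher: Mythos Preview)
Your approach is different from the paper's and leaves a genuine gap at exactly the point you flag. The paper's proof does not attempt to lift an orbifold isometry at all. Instead, after obtaining the fiber-preserving diffeomorphism $f$ from Proposition~\ref{prop:strong_rigidity}, it invokes the computation of the Teichm\"uller space $\mathcal{T}(M)$ of geometric structures on $M$ (citing \cite{oh} and \cite{klr}): $\mathcal{T}(M)$ is a single point for $\widetilde{\mathrm{SL}_2}(\R)$, and for the other three geometries it is homeomorphic to $\R$, parametrized by the logarithm of the length $l_\sigma$ of a regular fiber. Since $f\colon M_\sigma\to M_{f_*\sigma}$ is an isometry, $l_\sigma=l_{f_*\sigma}$, so $f_*\sigma=\sigma$ and $f$ is isotopic to an isometry of $M_\sigma$. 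This bypasses any case-by-case analysis of $\Isom(\widetilde M)$ and the normalizer question.

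In your outline, the third step is the problem. The assertion that ``the image of $\Isom(M)\to\Isom(O)$ is the subgroup preserving the Euler class'' is not a precise criterion: the Euler number is an invariant of the oriented total space, not something on which $\Isom(O)$ acts, so it does not tell you which $\phi$ lift. What you actually need is that for each $\phi\in\Isom(O)$ covered by some fiber-preserving \emph{diffeomorphism} of $M$, there is a lift of $\phi$ to $\Isom(\widetilde M)$ normalizing the lattice $\pi_1(M)$; for $\widetilde{\mathrm{SL}_2}(\R)$ and $\mathrm{Nil}$ this is a real verification, essentially equivalent to the proposition itself. Your injectivity argument for $\Out(\pi_1(M))\to\Out(\pi_1^{\,\mathrm{orb}}(O))\times\mathrm{Aut}(C)$ is correct and is a clean way to finish \emph{once} a lift exists, but it does not help produce the lift. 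The Teichm\"uller-space argument is both shorter and avoids this difficulty.
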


\begin{proof}
From Proposition~\ref{prop:strong_rigidity}, there exists a
fiber-preserving diffeomorphism $f\colon M\to M$ with $f_\#=\theta$.

In the $\mathbb{E}^3$-case, let $\mathcal{T}(M)$ be the Teichm\"uller space
of Euclidean structures on $M$ with unit volume. For the other cases,
$\mathcal{T}(M)$ will denote the Teichm\"uller space of all geometric
structures on $M$. For $\sigma\in \mathcal{T}(M)$, let $l_\sigma$ denote
the length of a regular fiber of~$M_\sigma$.

If $M$ has an $\H\times \R$, $\mathbb{E}^3$, or $\mathrm{Nil}$ geometry,
then by \cite[Theorems 2.4, 2.6, 2.7]{oh} $\mathcal{T}(M)$ is homeomorphic
to $\R$, which corresponds to the parameter $\log(l_\sigma)$ for $\sigma\in
\mathcal{T}(M)$. (The statement of Theorem 2.4 in \cite{oh} contains a
misprint: the exponent for the closed orientable case we use
here should be $3-4\chi(X)+2k$. We remark that $\mathcal{T}(M)$ was
also found for all of these cases by R.~Kulkarni, K.-B.~Lee, and
F.~Raymond~\cite{klr} by a different method, although in the
$\mathbb{E}^3$-case $\mathcal{T}(M)$ is given there as $\R^2$ since the
volume is not normalized to be $1$.) Since $f\colon M_\sigma\to
M_{f_*(\sigma)}$ is isometric, $l_{\sigma}=l_{f_*(\sigma)}$ and hence
$\sigma=f_*(\sigma)$ in $\mathcal{T}(M)$. It follows that $f$ is isotopic
to an isometry.

If $M$ has an $\widetilde{\mathrm{SL}_2}(\R)$ geometry, then by
\cite[Theorem 2.5]{oh} (or~\cite{klr}), $\mathcal{T}(M)$ is a single
point. Again, $f$ is isotopic to an isometry.
\end{proof}

The quotient orbifold $O$ has a unique hyperbolic structure when
$\sum 1/\alpha_i<1$, and a unique Euclidean structure up to scaling when
$\sum 1/\alpha_i=1$. An isometry of $M$ induces an isometry of $O$, so the map
$\Isom(M)\to \Diff^{\,\mathrm{orb}}(O)$ taking each isometry $f$ to its induced
diffeomorphism $\overline{f}$ has image in $\Isom(O)$.

We will need some specific isometries.
\begin{lemma}\label{lem:isometries}
Give $M$ its standard $\mathbb{H}^2\times \R$,
$\widetilde{\mathrm{SL}_2}(\R)$, $\mathrm{Nil}$, or Euclidean geometry.
\begin{enumerate}
\item[(i)] There is an isometric involution
of $M$ that preserves each exceptional fiber, reverses
the direction of the fibers, and induces an orientation-reversing
reflection on $O$.
\item[(ii)] Suppose that the Seifert invariants
  $(\alpha_j,\beta_j)$ and $(\alpha_k,\allowbreak\beta_k)$ of two
  exceptional fibers of $M$ satisfy $\alpha_j=\alpha_k$ and $\beta_j\equiv
  \beta_k\bmod \alpha_j$. Then
  there is an isometric involution of $M$
  that interchanges these exceptional fibers, preserves the fiber
  direction, and on $O$ induces an orientation-preserving isometry that
  interchanges the cone points corresponding to these two exceptional
  fibers.
\end{enumerate}
\end{lemma}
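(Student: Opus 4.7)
The plan is to construct each required involution by lifting an isometric involution of the base orbifold $O$ through the Seifert $S^1$-fiber structure on $M$, working piecewise on the decomposition $M = (F \times S^1) \cup V_1 \cup V_2 \cup V_3$ and then checking isometricity via the universal cover.

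For part (i), I would use that $O$ is the metric double of the triangle with vertices at the three cone points, so it admits a canonical orientation-reversing isometric involution $\rho$ (the reflection swapping the two halves) that fixes every cone point. I would lift $\rho$ to $M$ piece-by-piece: on $F \times S^1$ as $r \times \iota$, where $r$ is the reflection of $F$ induced by $\rho$ and $\iota$ is fiber inversion; and on each fibered solid torus $V_i$ as the fiber-reversing involution that preserves the exceptional fiber setwise. The two pieces should match along each boundary torus $T_i$ because the defining Seifert relation $Q_i^{\alpha_i} T^{\beta_i} = 1$ is preserved by $(Q_i, T) \mapsto (Q_i^{-1}, T^{-1})$, yielding a well-defined orientation-preserving involution of $M$. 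To verify that this involution is isometric in the model geometry, I would realize it globally on the universal cover: in the $\H\times\R$ and $\mathbb{E}^3$ cases as $\widetilde{\rho} \times s$, where $\widetilde{\rho}$ is an involutive lift of $\rho$ and $s$ is an involutive isometry of $\R$ (possibly a translated reflection), and analogously in the twisted $\widetilde{\mathrm{SL}_2}(\R)$ and $\mathrm{Nil}$ cases.

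For part (ii), the hypothesis $\alpha_j = \alpha_k$ gives an orientation-preserving isometric involution $\pi$ of $O$ — a $\pi$-rotation about the geodesic axis through the third cone point perpendicular to the one through the $j$-th and $k$-th — which interchanges these two cone points and fixes the third. I would lift $\pi$ by $\pi \times \mathrm{id}_{S^1}$ on $F \times S^1$, by a fiber-preserving isometry from $V_j$ to $V_k$, and by a fiber-preserving involution of the third solid torus fixing its core. The matching condition at $T_j \cup T_k$ is that the meridian curve $Q_j^{\alpha_j} T^{\beta_j}$ of $V_j$ be sent to the meridian $Q_k^{\alpha_k} T^{\beta_k}$ of $V_k$; since the lift takes $Q_j$ to (a conjugate of) $Q_k$ and fixes $T$, this requires exactly $\alpha_j = \alpha_k$ and $\beta_j \equiv \beta_k \pmod{\alpha_j}$, which is the hypothesis. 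Isometricity is again checked via the universal cover.

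The main obstacle in each part is verifying the matching of the construction at the boundary tori of the exceptional solid tori. For part (i) this matching is essentially automatic, being a manifestation of the symmetry $(Q_i, T) \mapsto (Q_i^{-1}, T^{-1})$ of the Seifert relations; for part (ii) it is precisely the arithmetic condition $\beta_j \equiv \beta_k \pmod{\alpha_j}$ assumed in the hypothesis, which explains why this hypothesis appears in the statement of the lemma.
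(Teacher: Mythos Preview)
Your approach is viable but takes a genuinely different route from the paper. The paper does not build the map geometrically at all: it simply writes down the desired automorphism $\theta$ of $\pi_1(M)$ (for (i): $\theta(q_i)=q_i^{-1}$ up to conjugacy and $\theta(t)=t^{-1}$; for (ii): $\theta(q_j)=q_kt^{n}$, $\theta(q_k)=q_jt^{-n}$, $\theta(t)=t$, where $\beta_k-\beta_j=n\alpha_j$) and then invokes Proposition~\ref{prop:realize}, which has already shown that every outer automorphism of $\pi_1(M)$ is realized by an isometry. That reduces the lemma to a couple of lines, whereas your piecewise construction re-derives a special case of that proposition by hand.

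Your construction is essentially right in outline, but two points need tightening. First, in part~(ii) the lift $\pi\times\mathrm{id}_{S^1}$ on $F\times S^1$ does \emph{not} carry the meridian $Q_j^{\alpha_j}T^{\beta_j}$ to the meridian $Q_k^{\alpha_k}T^{\beta_k}$ when $\beta_j\neq\beta_k$: it sends it to (a conjugate of) $Q_k^{\alpha_j}T^{\beta_j}$, which is the wrong primitive class in $\pi_1(T_k)\cong\mathbb{Z}^2$. You must first compose with vertical Dehn twists along annuli in $F\times S^1$, exactly as in the proof of Proposition~\ref{prop:strong_rigidity}; the hypothesis $\beta_j\equiv\beta_k\pmod{\alpha_j}$ is precisely what allows those twists to correct the discrepancy. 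Second, ``checking isometricity via the universal cover'' is where the actual content lies: you must show that your diffeomorphism lifts to an element of $\Isom(\widetilde M)$ normalizing the deck group, and carrying this out uniformly across all four geometries---especially $\widetilde{\mathrm{SL}_2}(\R)$ and $\mathrm{Nil}$, where the metric is not a product and the phrase ``analogously in the twisted cases'' hides real work---amounts to redoing the Teichm\"uller-space argument already packaged in Proposition~\ref{prop:realize}. Since that proposition is in hand, citing it is both shorter and safer.
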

\begin{proof}
For (i), consider an orientation-reversing reflection on
$O$ whose induced automorphism $\theta$ on $\pi_1^{\,\mathrm{orb}}(O)$ is
$\theta(q_1)=q_1^{-1}$, $\theta(q_2)=q_2^{-1}$, and
$\theta(q_3)=q_2q_1q_3^{-1}q_2^{-1}q_1^{-1}$. This extends to an
automorphism of $\pi_1(M)$ by putting $\theta(t)=t^{-1}$. Applying
Proposition~\ref{prop:realize} gives an isometry as in (i) inducing
$\theta$.

For part (ii), we have by assumption that $\beta_k-\beta_j=n\alpha_j$ for
some integer $n$. We proceed as in part~(i),
using an automorphism $\theta$ such that
$\theta(t)=t$, $\theta(q_j)=q_kt^n$, $\theta(q_k)=q_jt^{-n}$, and for the
remaining $q_i$, $\theta(q_i)$ is determined by the relation
$\theta(q_1q_2q_3)=1$.
\end{proof}

For $s\in \R$, let $\varphi(s)\colon M\to M$ be induced by translation by
$sL$ in the $\R$-fibers of $\widetilde{M}$, where $L$ is the length of the
principal fiber of $M$. Each $\varphi(s)=\varphi(s+1)$, so we regard
$\varphi\colon S^1\to \Isom(M)$ as a circular isotopy of $M$. These are
\textit{vertical,} that is, they take each fiber of $M$ to itself. We
denote vertical maps of $M$ by a subscript $v$, so the vertical isometries
form a subgroup $\Isom_v(M)$. Corollary~\ref{cor:threecone_op} yields
immediately
\begin{lemma}\label{lem:vertical_isometries}
No vertical diffeomorphism of $M$ can reverse the fiber direction.
Consequently, $\Isom_v(M)=S^1$.
\end{lemma}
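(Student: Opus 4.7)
The plan is to establish the first assertion by an orientation count using Corollary~\ref{cor:threecone_op}, and then deduce the equality $\Isom_v(M)=S^1$ by lifting to the universal cover and invoking a local form of the metric.

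First I would suppose, for contradiction, that $f$ is a vertical diffeomorphism of $M$ reversing the direction of the fibers. Because $f$ is vertical, the induced diffeomorphism $\overline{f}$ of $O$ is the identity, and in particular preserves the orientation of $O$. On each regular fiber, $f$ restricts to an orientation-reversing diffeomorphism of $S^1$. Since the orientation on $M$ agrees locally with the product of an orientation on the base and an orientation on the fiber (on the complement of the codimension-$2$ exceptional set), $f$ is orientation-reversing on $M$. This contradicts Corollary~\ref{cor:threecone_op}, proving the first assertion.

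Next I would show $\Isom_v(M)\subseteq \varphi(S^1)$; the reverse inclusion holds by definition of $\varphi$. Given $f\in \Isom_v(M)$, the first assertion shows that $f$ preserves the fiber direction. Pick a lift $\tilde f$ of $f$ to the universal cover $\widetilde M$. Since $f$ is vertical, $\tilde f$ covers some deck transformation $\gamma$ of the cover $\widetilde O \to O$; composing $\tilde f$ with a suitable deck transformation of $\widetilde M\to M$ that projects to $\gamma^{-1}$, I may assume that $\tilde f$ induces the identity on $\widetilde O$. Then $\tilde f$ carries each $\R$-fiber to itself and preserves its direction. In local coordinates $(x,\theta)$ in which the metric has the standard form $ds^2=g_{\mathrm{base}}+(d\theta+\omega)^2$ (where $\omega$ is a $1$-form on the base, and $\omega\equiv 0$ for the product geometries $\H^2\times\R$ and $\mathbb{E}^3$), the map $\tilde f$ takes the form $(x,\theta)\mapsto(x,\theta+h(x))$. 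A direct differential calculation shows that the isometry condition forces $dh=0$, so $h$ is constant on the connected space $\widetilde O$. Thus $\tilde f$ is a uniform translation along fibers, and so descends to some $\varphi(s)\in\varphi(S^1)$.

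The main technical point is the computation showing $dh=0$: it is immediate in the product geometries and requires only a direct expansion of $(d\theta+dh+\omega)^2=(d\theta+\omega)^2$ in the twisted cases $\widetilde{\mathrm{SL}_2}(\R)$ and $\mathrm{Nil}$. Together with the first step this gives $\Isom_v(M)=\varphi(S^1)\cong S^1$, as claimed.
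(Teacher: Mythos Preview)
Your argument is correct. For the first assertion you use exactly the paper's approach: the paper states that the lemma follows ``immediately'' from Corollary~\ref{cor:threecone_op}, and your orientation count is precisely the one-line justification behind that claim.

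For the second assertion the paper gives no explicit argument at all---it simply writes ``Consequently, $\Isom_v(M)=S^1$'' and relies on the reader's familiarity with the isometry groups of the four model geometries (as described in~\cite{sc2}) to see that a vertical isometry covering the identity on $O$ and preserving fiber direction must be a pure fiber translation. Your route is genuinely different: you lift to $\widetilde M$, normalize so that the lift covers the identity on $\widetilde O$, write the lift as $(x,\theta)\mapsto(x,\theta+h(x))$ (legitimate because each fiber is a geodesic line and an orientation-preserving self-isometry of $\R$ is a translation), and then force $dh=0$ by comparing $(d\theta+\omega)^2$ with $(d\theta+dh+\omega)^2$. This is a clean, self-contained differential-geometric computation that avoids appealing to the classification of isometry groups; the paper's implicit approach is shorter but leans on background the reader must supply.
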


The isometry group $\Isom(O)$ is finite of the form $C_2\times G$, where
the $C_2$-factor is generated by an orientation-reversing reflection that
fixes the cone points, and $G$ is orientation-preserving and is either
trivial, $C_2$, or $D_3$ according as the orders $\alpha_1$, $\alpha_2$,
and $\alpha_3$ of its cone points are distinct, exactly two are equal, or
all three are equal. Note that $\Isom(O)\to \Out(\pi_1^{\,\mathrm{orb}}(O))$ is
injective.

\begin{prop}\label{prop:IsomOut}
The natural map $\Isom(M)\to \Out(\pi_1(M))$ is a surjective 
homomorphism with kernel $\Isom_v(M)$. Consequently,
$\Isom(M)$ is homeomorphic to $\Out(\pi_1(M))\times S^1$.
\end{prop}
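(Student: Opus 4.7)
The plan is to realize $\Isom(M)\to\Out(\pi_1(M))$ as the quotient in a short exact sequence of topological groups with kernel $\Isom_v(M)\cong S^1$, and then read off the product decomposition from compact Lie group generalities. Surjectivity is immediate from Proposition~\ref{prop:realize}. For one inclusion of the kernel, the circular isotopy $\varphi$ shows that each $\varphi(s)$ is isotopic through isometries to $\mathrm{Id}_M$, hence $\varphi(s)_\#$ is trivial on $\pi_1(M)$, and Lemma~\ref{lem:vertical_isometries} identifies all vertical isometries with this $S^1$, so $\Isom_v(M)$ lies in the kernel.

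The main conceptual step is the reverse inclusion. Suppose $f\in\Isom(M)$ with $f_\#$ conjugation by some $g\in\pi_1(M)$. Since every isometry is fiber-preserving (as observed at the start of Section~\ref{sec:realization}), $f$ descends to an isometry $\overline{f}\in\Isom(O)$. A short diagram chase then shows that $\overline{f}_\#$ on $\pi_1^{\,\mathrm{orb}}(O)=\pi_1(M)/C$ is conjugation by the image $\overline{g}$, hence inner, so it is trivial in $\Out(\pi_1^{\,\mathrm{orb}}(O))$. The already-noted injectivity of $\Isom(O)\to\Out(\pi_1^{\,\mathrm{orb}}(O))$ therefore forces $\overline{f}=\mathrm{Id}_O$, which means $f$ preserves each fiber of $M$ setwise, i.e.\ $f\in\Isom_v(M)$.

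To upgrade this algebraic exact sequence to the asserted homeomorphism, I would invoke Myers--Steenrod to see that $\Isom(M)$ is a compact Lie group, observe that $\Isom(M)\to\Out(\pi_1(M))$ is locally constant (its target being discrete), and conclude that the identity component of $\Isom(M)$ lies in $\Isom_v(M)$; since $S^1$ is already connected it must equal that identity component. Consequently $\pi_0(\Isom(M))\cong\Out(\pi_1(M))$ is finite, and since every component of a Lie group is homeomorphic to the identity component by left-translation, $\Isom(M)$ is homeomorphic to $\Out(\pi_1(M))\times S^1$. I expect the fiber-descent step to be the main substantive move, as the rest is either a direct quotation or compact-Lie-group bookkeeping.
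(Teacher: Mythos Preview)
Your argument is correct and follows essentially the same route as the paper: surjectivity from Proposition~\ref{prop:realize}, and the kernel identification by descending to $O$ and invoking the injectivity of $\Isom(O)\to\Out(\pi_1^{\,\mathrm{orb}}(O))$. The paper's proof is terser and leaves the ``Consequently'' clause implicit, whereas you spell out the Myers--Steenrod/compact-Lie-group bookkeeping; this is a welcome elaboration but not a different approach.
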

\begin{proof}
The surjectivity is from Proposition~\ref{prop:realize}. An element $f$ of
the kernel must induce the identity outer automorphism on $\pi_1^{\,\mathrm{orb}}(O)$,
so $\overline{f}$ is the identity on $O$ and therefore $f$ is vertical.
\end{proof}

\section{Fiber-preserving diffeomorphisms}

For a Seifert-fibered $3$-manifold $M$, the fiber-preserving
diffeomorphisms form a subgroup $\Diff_f(M)$ of $\Diff(M)$. From
Theorem~2.6.4 of~\cite{HKMR}, $\Diff_f(M)$ is a separable Fr\'echet
manifold, so is homotopy equivalent to a CW-complex.

Each element of $\Diff_f(M)$ induces an orbifold diffeomorphism of the base
orbifold $O$, and by Theorem~3.6.3 of~\cite{HKMR}, the map $\Diff_f(M)\to
\Diff^{\,\mathrm{orb}}(O)$ is a fibration over its image, with fiber the vertical
diffeomorphisms~$\Diff_v(M)$. 

We will need a description of the connected component of the identity,
$\diff_v(M)$. Provided that $M$ has an
orientable base orbifold, it has a circular vertical isotopy that rotates
each nonsingular fiber one full turn, such as the
$\varphi$ in the special case of Section~\ref{sec:realization}.
\begin{lemma}\label{lem:diffv}
Let $M$ be an orientable Seifert-fibered $3$-manifold with orientable base
orbifold. Any circular vertical isotopy $\varphi\colon S^1\to \diff_v(M)$
that rotates each nonsingular fiber one full turn defines a homotopy
equivalence $S^1\simeq \diff_v(M)$.
\end{lemma}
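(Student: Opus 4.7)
The plan is to identify the homotopy type of $\diff_v(M)$ via an evaluation fibration and then verify that $\varphi$ realizes the equivalence. Pick a regular fiber $F_0\cong S^1$ and a basepoint $x_0\in F_0$, and consider the evaluation map $\operatorname{ev}\colon \diff_v(M)\to F_0$, $f\mapsto f(x_0)$, which lands in $F_0$ by verticality. By hypothesis the composite $\operatorname{ev}\circ\varphi\colon S^1\to F_0$ traces $F_0$ out exactly once, hence has degree $\pm 1$, so in particular $\operatorname{ev}$ is surjective. A Palais-style slice argument (using the transitive $S^1$-action on $F_0$ provided by $\varphi$) shows that $\operatorname{ev}$ is a locally trivial fiber bundle with fiber $H=\{f\in\diff_v(M):f(x_0)=x_0\}$.

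Assuming $H$ is contractible, the long exact sequence of this fibration gives $\diff_v(M)\simeq S^1$ with $\operatorname{ev}$ itself a homotopy equivalence, and then the degree $\pm 1$ composite $\operatorname{ev}\circ\varphi$ forces $\varphi$ to be a homotopy equivalence as well (using Palais's result that $\diff_v(M)$ has the homotopy type of a CW-complex). So the lemma reduces to showing that $H$ is contractible.

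For $H$, restriction to $F_0$ gives a map $H\to\Diff^+(F_0,x_0)$ onto a contractible base (orientation-preserving diffeomorphisms of $S^1$ fixing a point deformation retract to the identity via the standard convex straightening of $\Diff^+(\R)$), whose kernel $H'$ consists of vertical diffeomorphisms fixing $F_0$ pointwise. In a fibered tubular neighborhood $F_0\times D^2$ of $F_0$, any element of $H'$ close to the identity is described by a smooth function $D^2\to\R$ vanishing at the origin that gives the angular rotation of each nearby regular fiber; scaling this function down to zero yields a deformation retraction of $H'$ onto the identity, and the construction extends across the rest of $M$, including the exceptional fibered solid tori (via their standard cyclic covers), since a vertical diffeomorphism is fiberwise a rotation.

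The main obstacle is carrying out this last step rigorously in the $C^\infty$-topology: ensuring that $H'\to H\to\Diff^+(F_0,x_0)$ really is a locally trivial bundle and that the local rotation-scaling retraction of $H'$ patches into a global deformation, uniformly across exceptional fibers. Both points are routine given the local structure theory of orientable Seifert fiberings with orientable base (where the principal $S^1$-action is globally defined), and once they are verified the lemma follows from the chain of contractibility and fibration arguments above.
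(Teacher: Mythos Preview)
Your overall strategy---use the evaluation map to a regular fiber and reduce to contractibility of the pointwise stabilizer---is close in spirit to the paper's proof, which also starts with the evaluation $e\colon\diff_v(M)\to S^1$ at a basepoint and uses that $e\circ\varphi$ is a homeomorphism to get injectivity of $\varphi_\#$ on $\pi_1$. Rather than your tower $H'\subset H\subset\diff_v(M)$, however, the paper works directly with a based family $f\colon S^q\to\diff_v(M)$: after multiplying by a power of $\varphi$ so that $e\circ f$ is nullhomotopic, it restricts $f$ to $F\times S^1$ (where $F$ is the base orbifold minus disk neighborhoods of the cone points), lifts the family to $F\times\mathbb{R}$ as $(x,t)\mapsto(x,\omega_s(x,t))$, and applies the straight-line homotopy $(x,t)\mapsto(x,(1-u)\omega_s(x,t)+ut)$, which is equivariant, descends to $F\times S^1$, and then extends across the singular solid tori.

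There is a genuine gap in your handling of $H'$. You assert that ``a vertical diffeomorphism is fiberwise a rotation'' and that an element of $H'$ near the identity is recorded by a smooth function $D^2\to\mathbb{R}$ giving a single rotation angle per nearby fiber. This is false: a vertical diffeomorphism restricts to an \emph{arbitrary} orientation-preserving diffeomorphism of each circle fiber, not a rotation, so in a trivialized neighborhood $S^1\times D^2$ it is encoded by a function of both the fiber coordinate and the base coordinate, not by an angle alone. Consequently your ``scale the angular rotation to zero'' retraction does not apply as written, and the local argument does not patch to a global contraction of $H'$. The fix is precisely the paper's device: lift the fiber coordinate to $\mathbb{R}$ and convex-interpolate there. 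Once you make that substitution, your fibration decomposition becomes viable, but at that point it is essentially a reorganization of the paper's argument rather than an alternative to it.
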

\begin{proof}
Fix a basepoint $m_0$ in a nonsingular fiber. Restriction to $m_0$ defines
a map (actually a fibration) $e\colon \diff_v(M)\to S^1$. The composition
$S^1\stackrel{\varphi}{\longrightarrow}
\diff_v(M)\stackrel{e}{\longrightarrow} S^1$ is a homeomorphism, so
$\varphi_\#\colon \pi_1(S^1)\to \pi_1(\diff_v(M))$ is injective.

Now, consider a parameterized family $f\colon (S^q,s_0)\to
(\diff_v(M),\mathrm{Id}_M)$, for $q\geq 1$. To complete the proof that
$\varphi$ is a homotopy equivalence, we show that $f$ is nullhomotopic,
when $q>1$, or homotopic to a power of $\varphi$, when $q=1$. Multiplying
$f$ by a power of $\varphi$, when $q=1$, we may assume that
$S^q\stackrel{f}{\longrightarrow}
\diff_v(M)\stackrel{e}{\longrightarrow}S^1$ is nullhomotopic.

Let $F$ be the surface obtained from the base orbifold by removing the
interiors of disjoint disk neighborhoods of the cone points, or if there
are no cone points, by removing the interior of some disk. Consider the
restriction of $f$ to a parameterized family $g\colon S^q\longrightarrow
\diff_v(F\times S^1)$ of vertical diffeomorphisms of $F\times S^1$. Since
$S^q\stackrel{f}{\longrightarrow}
\diff_v(M)\stackrel{e}{\longrightarrow}S^1$ is nullhomotopic, we can lift
$g$ to $\tilde{g}\colon S^q\longrightarrow \diff_v(F\times \mathbb{R})$
such that $\tilde{g}(s_0)=\mathrm{Id}_{F\times \mathbb{R}}$.  Note that
for any $s\in S^q$, $\tilde{g}(s)$ is equivariant.  This means that if we
write $\tilde{g}(s)(x,t)=(x,\omega_s(x,t))$ for $(x,t)\in F\times
\mathbb{R}$ and regard $S^1$ as $\mathbb{R}/\mathbb{Z}$, then
$\omega_s(x,t+1)=\omega_s(x,t)+1$. The homotopy $\tilde{g}_u\colon
S^q\longrightarrow \diff_v(F\times \mathbb{R})$ $(u\in [0,1])$ defined by
\[\tilde{g}_u(s)(x,t)=(x,(1-u)\omega_s(x,t)+ut)\] satisfies $\tilde{g}_0(s)
=\tilde{g}(s)$, $\tilde{g}_1(s)=\mathrm{Id}_{F\times
\mathbb{R}}$ for any $s\in S^q$ and $\tilde{g}_u(s_0)
=\mathrm{Id}_{F\times \mathbb{R}}$ for any $u\in [0,1]$.
Moreover, from the construction of $\tilde{g}_u$, each
$\tilde{g}_u(s)$ is equivariant.  Thus $\tilde{g}_u$ covers a
homotopy $g_u\colon S^q\longrightarrow \diff_v(F\times S^1)$ between
$g$ and $\mathrm{Id}_{F\times S^1}$, which is naturally extended to a
homotopy $f_u\colon S^q\longrightarrow \diff_v(M)$ between $f$ and
$\mathrm{Id}_M$. This shows that $f$ is contractible in $\diff_v(M)$.
\end{proof}
\noindent We remark that when $M$ has nonorientable base orbifold, there is
no circular isotopy such as $\varphi$, and $\diff_v(M)$ is contractible, but
we will not need this information.

\begin{lemma}\label{lem:diffv3cone}
Suppose that $M$ is a Seifert-fibered $3$-manifold with base
orbifold a $2$-sphere with three cone points, and that $\pi_1(M)$ is
infinite. Then $\diff_v(M)=\Diff_v(M)$.\par
\end{lemma}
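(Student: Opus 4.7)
The plan is to show that every $f\in\Diff_v(M)$ is vertically isotopic to $\mathrm{Id}_M$, by lifting to the universal cover, performing a straight-line homotopy along the $\R$-fibers, and descending.

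By Lemma~\ref{lem:vertical_isometries}, $f$ preserves the direction of each fiber. Let $\widetilde M\to M$ be the universal cover, equipped with its natural fibration over the universal orbifold cover $\widetilde O$ of $O$ (either $\H^2$ or~$\R^2$) with $\R$-fibers. The central generator $t\in\pi_1(M)$ acts on $\widetilde M$ as unit translation in the $\R$-direction. Since every $\gamma\in\pi_1(M)$ commutes with $t$, its fiber component commutes with $s\mapsto s+1$, and the only isometries of $\R$ with this property are translations; hence $\widetilde\gamma(x,s)=(\gamma_O x,\,s+c_\gamma(x))$ for some function $c_\gamma$.

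Choose a lift $\widetilde f$ of $f$ whose induced map on $\widetilde O$ is $\mathrm{id}_{\widetilde O}$: this is possible because $\overline f=\mathrm{id}_O$, so any initial lift has base map a deck transformation $\gamma_0\in G=\pi_1^{\,\mathrm{orb}}(O)$, which can be cancelled by composing with $\widetilde\gamma_0^{-1}$. Write $\widetilde f(x,s)=(x,\widetilde h_x(s))$ with $\widetilde h_x\in\mathrm{Diff}^+(\R)$. This lift is $\pi_1(M)$-equivariant up to an automorphism $\theta$. Since $f$ preserves fiber orientation and has degree one on each fiber, $\widetilde h_x(s+1)=\widetilde h_x(s)+1$, so $\theta(t)=t$; and because $\widetilde f_O=\mathrm{id}$, $\theta$ is trivial on the quotient $G$. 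Hence $\theta(\widetilde q_i)=\widetilde q_i\,t^{n_i}$ for some integers~$n_i$, and applying $\theta$ to the presentation relation $\widetilde q_i^{\alpha_i}t^{\beta_i}=1$ yields $t^{n_i\alpha_i}=1$. Since $t$ has infinite order in $\pi_1(M)$, this forces $n_i=0$, so $\theta=\mathrm{Id}$ and $\widetilde f$ is literally $\pi_1(M)$-equivariant.

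Finally, define the straight-line homotopy
\[\widetilde f_u(x,s)=(x,(1-u)\widetilde h_x(s)+us),\qquad u\in[0,1].\]
Each $\widetilde f_u$ is a vertical diffeomorphism because $(1-u)\widetilde h_x'(s)+u>0$, with $\widetilde f_0=\widetilde f$ and $\widetilde f_1=\mathrm{Id}_{\widetilde M}$. Using the equivariance identity $\widetilde h_{\gamma_O x}(s+c_\gamma(x))=\widetilde h_x(s)+c_\gamma(x)$, a direct calculation verifies $\widetilde f_u\circ\widetilde\gamma=\widetilde\gamma\circ\widetilde f_u$ for every $\gamma$ and every $u$, so the family descends to a vertical isotopy on $M$ from $f$ to $\mathrm{Id}_M$. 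The main obstacle is the algebraic rigidity step $\theta=\mathrm{Id}$: it uses the three-cone-point hypothesis in an essential way, because all generators of $G$ have finite order, and this is precisely what rules out nontrivial homomorphisms from $G$ to $\langle t\rangle$ compatible with the defining relations of $\pi_1(M)$. Once equivariance is secured, the descent is a routine calculation.
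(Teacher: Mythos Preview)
Your argument is correct in outline and takes a genuinely different route from the paper's. The paper gives a short hands-on proof: it decomposes $M$ into three fibered solid tori (a neighborhood $V$ of one exceptional fiber, and two solid tori $T_1,T_2$ obtained by cutting $M\setminus\mathrm{Int}\,V$ along a vertical annulus) and vertically isotopes $j$ to the identity one piece at a time. Your approach instead lifts to the universal cover and runs a straight-line homotopy along the $\R$-fibers, in the spirit of the paper's proof of Lemma~\ref{lem:diffv} but carried out on all of $\widetilde M$ rather than only on $F\times S^1$. The algebraic rigidity step---forcing $\theta=\mathrm{Id}$ from the torsion relations $q_i^{\alpha_i}t^{\beta_i}=1$---is a pleasant use of the three-cone-point hypothesis that the paper's decomposition argument never needs. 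The paper's proof is shorter and entirely elementary; yours is more structural and explains \emph{why} no lift can be twisted.

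One step needs tightening. You assert that every deck transformation has the form $\widetilde\gamma(x,s)=(\gamma_O x,\,s+c_\gamma(x))$, reasoning that the fiber component is an ``isometry of $\R$'' commuting with $s\mapsto s+1$. But commuting with integer translation does not force a diffeomorphism of $\R$ to be a translation, and you have not fixed a metric making the fiber component an isometry. Your final equivariance check for $\widetilde f_u$ genuinely requires $\psi_\gamma(x,\cdot)$ to be affine, so this is not cosmetic. The clean fix: the Seifert fibering of $M$ comes from an $S^1$-action, which lifts to an $\R$-action on $\widetilde M$ commuting with the deck group; trivialize $\widetilde M\cong\widetilde O\times\R$ so that this $\R$-action is by translation, and then every deck transformation commutes with \emph{all} fiber translations, forcing the form you want. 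Alternatively, put the geometric metric on $M$ and note that deck transformations are then isometries of $\widetilde M$ acting isometrically on the (geodesic) fibers---but say so explicitly.
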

\begin{proof}
We must show that any vertical diffeomorphism $j$ of $M$ is vertically
isotopic to the identity. By Lemma~\ref{lem:vertical_isometries}, $j$
cannot reverse the fiber direction. 
By vertical isotopy, we can make $j$
the identity on an exceptional fiber $F_0$, and then the identity on a
fibered solid torus neighborhood $V$ of $F_0$. In $N=M\setminus
\text{int}(V)$, there is a vertical annulus that separates $N$ into two
solid tori $T_1$ and $T_2$, each intersecting $V$ in a vertical annulus.
By a vertical isotopy fixing $V$, we can make $j$ the identity on $T_1$. It
is now the identity on $\partial T_2$, so by vertical isotopy we can make
it the identity on $T_2$ as well.
\end{proof}

\begin{prop}\label{prop:threearrows}
Suppose that $M$ is a Seifert-fibered $3$-manifold with base orbifold
a $2$-sphere with three cone points, and that $\pi_1(M)$ is
infinite. Give $M$ its standard $\mathbb{H}^2\times \R$,
$\widetilde{\mathrm{SL}_2}(\R)$, $\mathrm{Nil}$, or Euclidean geometry.
In the sequence
\[\Isom(M)\to \Diff_f(M)\to \Diff(M)\to \Out(\pi_1(M))\ ,\]
each of the three maps is bijective on path components.
\end{prop}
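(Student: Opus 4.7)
The overall composition $\Isom(M)\to\Out(\pi_1(M))$ is surjective with connected kernel $\Isom_v(M)=S^1$ by Proposition~\ref{prop:IsomOut}, so it is already bijective on $\pi_0$. The plan is to handle the three maps in turn, using this global bijection to bootstrap the leftmost map once the others are in hand; the single hard step will be injectivity of the middle map, which I reduce to the HKMR fibration and the orbifold Dehn--Nielsen--Baer theorem.

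For the rightmost map $\pi_0(\Diff(M))\to\Out(\pi_1(M))$, surjectivity is Proposition~\ref{prop:strong_rigidity}. Injectivity follows because $M$ is aspherical, so two diffeomorphisms inducing the same outer automorphism are homotopic, combined with the fact that for these $M$ homotopic diffeomorphisms are isotopic (Waldhausen in the Haken case, and Scott, Boileau--Otal, and the second author in the non-Haken hyperbolic base orbifold case; the analogous results cover the infranilmanifold case). For the middle map $\pi_0(\Diff_f(M))\to\pi_0(\Diff(M))$, surjectivity is Oh's uniqueness-up-to-isotopy theorem for Seifert fiberings \cite{oh}: every diffeomorphism is isotopic to a fiber-preserving one.

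The main technical step is injectivity of the middle map. Suppose $f_1,f_2\in\Diff_f(M)$ are isotopic in $\Diff(M)$. Then they induce equal outer automorphisms of $\pi_1(M)$, and hence of $\pi_1^{\,\mathrm{orb}}(O)$, so the induced orbifold diffeomorphisms $\overline{f}_1,\overline{f}_2\in\Diff^{\,\mathrm{orb}}(O)$ lie in the same path component by the Dehn--Nielsen--Baer theorem for $2$-orbifolds of hyperbolic or Euclidean type. Fix an isotopy from $\overline{f}_1$ to $\overline{f}_2$; using the fibration $\Diff_f(M)\to\Diff^{\,\mathrm{orb}}(O)$ onto its image with fiber $\Diff_v(M)$ (Theorem~3.6.3 of \cite{HKMR}), lift it to a path $h_t$ in $\Diff_f(M)$ with $h_0=f_1$. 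The endpoint $h_1$ covers $\overline{f}_2$, so $h_1\circ f_2^{-1}\in\Diff_v(M)$. By Lemma~\ref{lem:diffv3cone}, $\Diff_v(M)=\diff_v(M)$ is path-connected, so $h_1$ is fiber-preserving isotopic to $f_2$, and concatenation produces a fiber-preserving isotopy from $f_1$ to $f_2$.

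Bijectivity of the leftmost map $\pi_0(\Isom(M))\to\pi_0(\Diff_f(M))$ is then automatic, since the composition $\pi_0(\Isom(M))\to\Out(\pi_1(M))$ is a bijection and the composed last two maps are bijective. Equivalently, injectivity is forced by the factorization through the bijection $\pi_0(\Isom(M))\to\Out(\pi_1(M))$, while surjectivity is direct: given $f\in\Diff_f(M)$, Proposition~\ref{prop:realize} supplies an isometry $g$ with the same induced outer automorphism, so $f$ and $g$ lie in the same component of $\Diff(M)$ (rightmost injectivity) and hence of $\Diff_f(M)$ (middle injectivity). The principal obstacle is the orbifold Dehn--Nielsen--Baer input in step three; once that is invoked, the rest is a diagram chase through the HKMR fibration, Lemma~\ref{lem:diffv3cone}, and Proposition~\ref{prop:IsomOut}.
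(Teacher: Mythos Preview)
Your argument is correct, and the overall architecture---handling the rightmost map via asphericity and Scott/Boileau--Otal, then the middle, then deducing the leftmost from the global bijection of Proposition~\ref{prop:IsomOut}---matches the paper's. The genuine difference is in how you establish injectivity of $\pi_0(\Diff_f(M))\to\pi_0(\Diff(M))$. The paper argues by hand: given a fiber-preserving $f$ isotopic to the identity, it uses fundamental-group constraints to see that $f$ fixes each exceptional fiber and preserves fiber direction, then successively isotopes $f$ (through fiber-preserving maps) to be the identity on the solid tori around the exceptional fibers, then straightens the induced map on the pair-of-pants factor $F$, leaving a vertical diffeomorphism to which Lemma~\ref{lem:diffv3cone} applies. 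You instead invoke the orbifold Dehn--Nielsen--Baer theorem to conclude that $\overline{f}_1$ and $\overline{f}_2$ are orbifold-isotopic, lift that isotopy through the HKMR fibration $\Diff_f(M)\to\Diff^{\,\mathrm{orb}}(O)$, and finish with Lemma~\ref{lem:diffv3cone}. Your route is cleaner and more structural, at the cost of citing an extra (though easy, for $S^2(p,q,r)$) external fact; the paper's route is entirely self-contained. A minor point you glide over: lifting the orbifold isotopy requires that it stay in the image of $\Diff_f(M)\to\Diff^{\,\mathrm{orb}}(O)$, i.e., that the image contains $\diff^{\,\mathrm{orb}}(O)$; this holds because the image is a subgroup and the fibration property makes it open, but it deserves a sentence. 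For middle surjectivity you cite Oh's uniqueness theorem, whereas the paper gets it for free from the surjectivity of the full composition together with bijectivity of the rightmost map; both are fine.
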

\begin{proof}
By Proposition~\ref{prop:realize}, the composition of all four maps is
surjective, hence so is $\Diff(M)\to \Out(\pi_1(M))$. By results of
Scott~\cite{sc3} and Boileau-Otal~\cite{bo},
any diffeomorphism of $M$ that is homotopic to the identity is isotopic to
the identity, so $\Diff(M)\to \Out(\pi_1(M))$ is injective on path
components. This proves the lemma for the third map, and that the
second map is surjective on path components.

As usual, let $F$ be the surface obtained from the base orbifold by
removing the interiors of disjoint disk neighborhoods of the cone
points. Consider a fiber-preserving diffeomorphism $f$ of $M$ that is
isotopic to the identity. By fundamental group considerations, $f$ cannot
reverse the direction of the fiber, and must preserve each exceptional
fiber. So by fiber-preserving isotopy we may assume that $f$ is the
identity on $\overline{M-F\times S^1}$. Every orientation-preserving
diffeomorphism of $F$ that preserves each boundary component is isotopic to
the identity, allowing us to change $f$ to be the identity in the
$F$-coordinate of $F\times S^1$. Since $f$ is now vertical,
Lemma~\ref{lem:diffv3cone} shows that $f$ is vertically isotopic to
the identity. We conclude that the second map is bijective and the first
map is surjective on path components.

By Proposition~\ref{prop:IsomOut}, $\Isom(M)\to \Out(\pi_1(M))$ is
injective on path components, hence so is the first map. This completes
the proof.
\end{proof}

\section{The space of Seifert fiberings and the Smale Conjecture}
\label{sec:Seifert}

Let $M$ be a Seifert-fibered $3$-manifold. Two (smooth) Seifert fiberings
of $M$ are considered equivalent if there is a diffeomorphism of $M$ that
takes fibers of one to fibers of the other. The coset space
$\Diff(M)/\Diff_f(M)$ is the space of Seifert fiberings equivalent to the
given one. Since fiberings equivalent under $\Diff(M)$ produce conjugate
subgroups for $\Diff_f(M)$, the homeomorphism type of $\Diff(M)/\Diff_f(M)$
is independent of the particular fibering within its equivalence class.
Taking the disjoint union of copies of $\Diff(M)/\Diff_f(M)$, one for each
equivalence class of Seifert fibering, we obtain the space $\SF(M)$ of
Seifert fiberings of $M$. By Proposition~3.6.11 of~\cite{HKMR}, $\SF(M)$ is
a separable Fr\'echet manifold locally modeled on an infinite-dimensional
separable Fr\'echet space, and consequently it has the homotopy type of a
CW-complex.
\longpage

In this section, we will prove that when $M$ is a closed orientable Seifert
fibered $3$-manifold with a hyperbolic base $2$-orbifold, $\SF(M)$ is
contractible. If in addition the base orbifold is a $2$-sphere with three
cone points, and $M$ has its standard $\H^2\times\R$ or
$\widetilde{\mathrm{SL}_2}(\R)$ geometry, then the inclusion $\Isom(M)\to
\Diff(M)$ is a homotopy equivalence. Both of these facts rely upon the
following result:
\begin{theorem}\label{thm:fiber-preserving}
Let $M$ be a closed orientable Seifert-fibered $3$-manifold with a
hyperbolic base orbifold. Then the inclusion $\Diff_f(M)\to \Diff(M)$
is a homotopy equivalence.
\end{theorem}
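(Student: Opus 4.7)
The plan is to compute $\diff_f(M)$ via the natural fibration over the orbifold diffeomorphism group of the base, show it matches $\diff(M)$ as given by the Main Theorem, and then extend from identity components to the full diffeomorphism groups using the topological-group structure. For the identity component, I would use the fibration from Theorem~3.6.3 of \cite{HKMR},
\[\diff_v(M)\longrightarrow \diff_f(M)\longrightarrow \diff^{\,\mathrm{orb}}(O),\]
whose base is contractible because $O$ is a closed hyperbolic $2$-orbifold (the orbifold analogue of the Earle--Eells contractibility result for $\diff$ of a closed hyperbolic surface). By Lemma~\ref{lem:diffv} together with the remark following it, $\diff_v(M)\simeq (S^1)^{k(M)}$, the $S^1$-factor (when $k(M)=1$) being represented by the vertical circular isotopy $\varphi$ that rotates each nonsingular fiber once. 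Hence the long exact sequence of the fibration yields $\diff_f(M)\simeq (S^1)^{k(M)}$.

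On the other hand, the Main Theorem asserts $\diff(M)\simeq (S^1)^{k(M)}$. When $k(M)=1$, Lemma~\ref{n=1} in the proof of the Main Theorem identifies $\varphi$ as a generator of $\pi_1(\diff(M))\cong \mathbf{Z}$: every loop based at $\mathrm{Id}_M$ is homotopic rel.\ basepoint to $\varphi^m$ for some $m\in \mathbf{Z}$. Since $\varphi$ lies in $\isom(M)\subset \diff_f(M)$ and also generates $\pi_1(\diff_f(M))$, the inclusion $\diff_f(M)\hookrightarrow \diff(M)$ induces an isomorphism on $\pi_1$. Both identity components are aspherical (tori of dimension $k(M)$, or points), so the inclusion is a homotopy equivalence on identity components.

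To pass from identity components to the full groups, I would use that both $\Diff_f(M)$ and $\Diff(M)$ are topological groups in which every component is homeomorphic to the identity component via left translation. Thus once the inclusion is bijective on $\pi_0$, the homotopy equivalence on identity components lifts to a homotopy equivalence on the whole inclusion. In the non-Haken case (base orbifold a $2$-sphere with three cone points) Proposition~\ref{prop:threearrows} directly provides the bijection on $\pi_0$. In the Haken case, Theorem~\ref{thm:HKMR} states that each component of $\SF(M)=\Diff(M)/\Diff_f(M)$ is contractible, and the long exact sequence of the principal $\Diff_f(M)$-bundle $\Diff(M)\to \Diff(M)/\Diff_f(M)$ then forces $\pi_0(\Diff_f(M))\to \pi_0(\Diff(M))$ to be bijective.

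The main obstacle is the precise identification of $\varphi$ as a common generator of $\pi_1(\diff_f(M))$ and $\pi_1(\diff(M))$. The first statement is immediate from Lemma~\ref{lem:diffv} since $\varphi$ is vertical; the second is the entire content of Lemma~\ref{n=1} from the proof of the Main Theorem, which is why this theorem must be proved after the Main Theorem but before Corollary~\ref{coro:Seifert}.
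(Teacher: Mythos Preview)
Your treatment of the non-Haken case (base orbifold $S^2$ with three cone points) is essentially the paper's own argument: the same fibration over $\diff^{\,\mathrm{orb}}(O)$, the same identification $\diff_v(M)\simeq S^1$ via Lemma~\ref{lem:diffv}, and the same appeal to the Main Theorem to conclude that $\varphi$ generates $\pi_1(\diff(M))$. One point you glide over: the fiber of $\diff_f(M)\to\diff^{\,\mathrm{orb}}(O)$ over the identity is $\Diff_v(M)\cap\diff_f(M)$, not a priori $\diff_v(M)$. The paper closes this with Lemma~\ref{lem:diffv3cone}, which gives $\Diff_v(M)=\diff_v(M)$ in the three-cone-point case; you should cite it.

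For the Haken case, however, your route diverges from the paper and has a genuine gap. The paper simply invokes Theorem~3.8.1 of~\cite{HKMR}, which is the statement that $\Diff_f(M)\to\Diff(M)$ is a homotopy equivalence for Haken Seifert fibered spaces. You instead try to extract this from Theorem~\ref{thm:HKMR} (which is Theorem~3.8.2 of~\cite{HKMR}) via the long exact sequence of $\Diff_f(M)\to\Diff(M)\to\Diff(M)/\Diff_f(M)$. But contractibility of each \emph{component} of $\Diff(M)/\Diff_f(M)$ only yields $\pi_1(\Diff(M)/\Diff_f(M))=0$, hence injectivity of $\pi_0(\Diff_f(M))\to\pi_0(\Diff(M))$; it says nothing about $\pi_0(\Diff(M)/\Diff_f(M))$ and so does not give surjectivity. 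Surjectivity on $\pi_0$ is the statement that every diffeomorphism of $M$ is isotopic to a fiber-preserving one, which is a separate input (uniqueness of the Seifert fibering up to isotopy, as in~\cite{oh}) that you would need to invoke explicitly. Your identity-component argument in the Haken case also leans on two facts not established in the paper: the orbifold Earle--Eells theorem for a general hyperbolic $O$, and the unproved remark that $\diff_v(M)$ is contractible when the base is nonorientable. These are true, but if you go this route you must supply or cite them; the paper avoids all of this by quoting~3.8.1 of~\cite{HKMR} directly.
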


\begin{proof}
When $M$ is Haken, this is Theorem~3.8.1 of~\cite{HKMR}, so we need only
consider the case when the base orbifold is a $2$-sphere with three cone
points. By Proposition~\ref{prop:threearrows}, the inclusion is a bijection
on path components, so it remains to prove that $\diff_f(M)\to \diff(M)$ is
a homotopy equivalence.

By Theorem~3.6.3 of~\cite{HKMR}, the induced map $\Diff_f(M)\to
\Diff^{\,\mathrm{orb}}(O)$ is a fibration over its image, and consequently the
restriction $\diff_f(M)\to \diff^{\,\mathrm{orb}}(O)$ is a fibration.  The fiber is
$\Diff_v(M)\cap \diff_f(M)$, which must be $\diff_v(M)$ by
Lemma~\ref{lem:diffv3cone}. Moreover, $\diff^{\,\mathrm{orb}}(O)$ is
contractible, since it is essentially $\diff(S^2\setminus\{\text{three
points}\})$, and it follows that the inclusion $\diff_v(M)\to \diff_f(M)$
is a homotopy equivalence.

Consider the composition $S^1\stackrel{\varphi}{\longrightarrow}
\diff_v(M)\to \diff_f(M)\to \diff(M)$. The first map is the homotopy
equivalence of Lemma~\ref{lem:diffv}, and we have just seen that the
second map is a homotopy equivalence. By the Main Theorem, the entire
composition is a homotopy equivalence, hence so the third map.
\end{proof}

The quotient map $\Diff(M)\to \Diff(M)/\Diff_f(M)$
is a fibration, by Proposition~3.6.11 of~\cite{HKMR}.
Therefore Theorem~\ref{thm:fiber-preserving} yields
\begin{cor}\label{coro:Seifert}
Let $M$ be a closed orientable Seifert-fibered $3$-manifold 
with a hyperbolic base orbifold. Then $\SF(M)$ is contractible.
\end{cor}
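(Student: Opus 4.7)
The plan is to deduce the contractibility of $\SF(M)$ directly from Theorem~\ref{thm:fiber-preserving} via the fibration structure on the orbit space $\Diff(M)/\Diff_f(M)$. Since this corollary has essentially no independent content beyond the preceding theorem, the argument will be short.

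First I would invoke Proposition~3.6.11 of~\cite{HKMR} to obtain the principal fibration $\Diff_f(M) \to \Diff(M) \to \Diff(M)/\Diff_f(M)$. From the associated long exact sequence of homotopy groups, combined with the fact (Theorem~\ref{thm:fiber-preserving}) that the fiber inclusion $\Diff_f(M) \to \Diff(M)$ is a homotopy equivalence, one reads off that $\pi_n(\Diff(M)/\Diff_f(M)) = 0$ for every $n \geq 0$, so this coset space is weakly contractible.

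Next I would identify $\SF(M)$ with this single coset space. By construction, $\SF(M)$ is a disjoint union of copies of $\Diff(M)/\Diff_f(M)$, one for each equivalence class of Seifert fibering on $M$. Because $M$ has hyperbolic base orbifold, $\pi_1(M)$ is infinite, and the uniqueness result of Oh (Lemma~2.1 and Corollary~2.3 of~\cite{oh}, already quoted in Section~\ref{sec:threecone}) guarantees that $M$ admits a unique Seifert fibering up to isotopy. Consequently only one equivalence class appears, and $\SF(M)$ coincides with the weakly contractible space $\Diff(M)/\Diff_f(M)$.

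Finally, since $\SF(M)$ is a separable Fr\'echet manifold, it has the homotopy type of a CW-complex (as recalled at the start of Section~\ref{sec:Seifert}), and Whitehead's theorem promotes weak contractibility to genuine contractibility. There is no real obstacle in this argument; all of the difficulty has been packaged into Theorem~\ref{thm:fiber-preserving}, which is precisely where the Main Theorem is used, together with the homotopy equivalence $S^1 \simeq \diff_v(M) \to \diff_f(M)$ furnished by Lemmas~\ref{lem:diffv} and~\ref{lem:diffv3cone}.
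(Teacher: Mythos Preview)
Your proof is correct and follows essentially the same approach as the paper's: invoke the fibration $\Diff_f(M)\to\Diff(M)\to\Diff(M)/\Diff_f(M)$ from Proposition~3.6.11 of~\cite{HKMR}, apply Theorem~\ref{thm:fiber-preserving}, and conclude. You supply more detail than the paper does---explicitly addressing the uniqueness of the Seifert fibering (so that $\SF(M)$ has only one component) and the passage from weak contractibility to contractibility via the CW structure---but these are precisely the points the paper leaves implicit.
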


As another consequence of Theorem~\ref{thm:fiber-preserving}, we have
the Smale Conjecture for our class of non-Haken manifolds:
\begin{theorem}\label{thm:Smale}
Let $M$ be a closed orientable Seifert-fibered $3$-manifold having an
$\H^2\times\R$ or $\widetilde{\mathrm{SL}_2}(\R)$ geometry, and
base orbifold a $2$-sphere with three cone
points. Then the inclusion $\Isom(M)\to \Diff(M)$ is a homotopy
equivalence.
\end{theorem}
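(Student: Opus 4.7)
The plan is to assemble the theorem from the ingredients developed earlier in the paper, leveraging Theorem~\ref{thm:fiber-preserving} together with the Main Theorem. First, since both $\Isom(M)$ and $\Diff(M)$ are topological groups, every path component is homeomorphic to the identity component, so it suffices to show (a)~that the inclusion is a bijection on $\pi_0$, and (b)~that the induced map on identity components $\isom(M)\to\diff(M)$ is a homotopy equivalence. Part~(a) is immediate from Proposition~\ref{prop:threearrows}, which asserts that each arrow in $\Isom(M)\to\Diff_f(M)\to\Diff(M)\to\Out(\pi_1(M))$ is bijective on path components.

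For part~(b), I would identify the identity component of $\Isom(M)$ via Proposition~\ref{prop:IsomOut} together with Lemma~\ref{lem:vertical_isometries}: $\isom(M)=\Isom_v(M)\cong S^1$, generated by the isometric circular isotopy $\varphi\colon S^1\to\Isom(M)$ that translates one full turn along the principal fiber. I would then factor the inclusion as
\[
S^1=\isom(M)\xrightarrow{\varphi}\diff_v(M)\longrightarrow\diff_f(M)\longrightarrow\diff(M)
\]
and verify that each arrow is a homotopy equivalence. The first is a homotopy equivalence by Lemma~\ref{lem:diffv}, applied with this $\varphi$. The second is a homotopy equivalence by the fibration argument used in the proof of Theorem~\ref{thm:fiber-preserving}: the fibration $\diff_f(M)\to\diff^{\,\mathrm{orb}}(O)$ has contractible base (since $\diff^{\,\mathrm{orb}}(O)$ is essentially the diffeomorphism group of a thrice-punctured sphere) and fiber $\diff_v(M)$, using Lemma~\ref{lem:diffv3cone}. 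The third arrow is Theorem~\ref{thm:fiber-preserving} itself.

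The main obstacle, though modest, is bookkeeping: one must confirm that the circular isotopy used to identify $\isom(M)\cong S^1$ agrees, up to homotopy inside $\diff_v(M)$, with the one to which Lemma~\ref{lem:diffv} applies, so that the displayed factorization genuinely composes three homotopy equivalences. Both isotopies arise from translation by the length $L$ of the principal fiber, so no real difficulty appears; alternatively, any vertical circular isotopy rotating each nonsingular fiber once is a generator of $\pi_1(\diff_v(M))\cong\mathbf{Z}$ by Lemma~\ref{lem:diffv}, which suffices. Once this is established, the composition $S^1\to\diff(M)$ is a homotopy equivalence; combined with the $\pi_0$ bijection and the topological group structures on both sides, the inclusion $\Isom(M)\to\Diff(M)$ is a homotopy equivalence.
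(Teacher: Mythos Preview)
Your proposal is correct and uses the same ingredients as the paper's proof, but the organization differs slightly. The paper reduces via Theorem~\ref{thm:fiber-preserving} to showing that $\Isom(M)\to\Diff_f(M)$ is a homotopy equivalence, and then sets up a comparison of fibrations
\[
\begin{CD}
\Isom_v(M) @>>> \Isom(M) @>>> \Isom_0(O)\\
@V{\alpha}VV @VVV @V{\beta}VV\\
\Diff_v(M) @>>> \Diff_f(M) @>>> \Diff^{\,\mathrm{orb}}_0(O)
\end{CD}
\]
verifying that the outer vertical arrows $\alpha$ and $\beta$ are homotopy equivalences (the first via Lemmas~\ref{lem:vertical_isometries}, \ref{lem:diffv}, \ref{lem:diffv3cone}, the second by a short $\pi_0$ argument plus contractibility of components of $\Diff^{\,\mathrm{orb}}(O)$). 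You instead separate the $\pi_0$ statement from the identity-component statement and argue the latter by chaining homotopy equivalences $\isom(M)\to\diff_v(M)\to\diff_f(M)\to\diff(M)$. This is marginally more economical, since it avoids the explicit check that $\beta$ is bijective on $\pi_0$; on the other hand, the paper's fibration diagram handles all homotopy groups at once and makes the parallel structure between the isometry and diffeomorphism sides more transparent. Either route is fine.
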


\begin{proof}
By Theorem~\ref{thm:fiber-preserving}, it suffices to show that
the inclusion $\Isom(M)\to \Diff_f(M)$ is a homotopy equivalence.

As already noted, Theorem~3.6.3 of~\cite{HKMR} shows that the induced map
$\Diff_f(M)\to \Diff^{\,\mathrm{orb}}(O)$ is a fibration over its image, which we will
denote by $\Diff^{\,\mathrm{orb}}_0(O)$. This gives the second row of the diagram
\begin{equation*}
\begin{CD}
\Isom_v(M) @>>> \Isom(M) @>>> \Isom_0(O)\\
\alpha @VVV @VVV \beta @VVV\\
\Diff_v(M) @>>> \Diff_f(M) @>>> \Diff^{\,\mathrm{orb}}_0(O)
\end{CD}
\end{equation*}
\noindent 
In the first row, $\Isom_0(O)$ is the image of $\Isom(M)\to \Isom(O)$. The
second map is a homomorphism with kernel $\Isom_v(M)$, so the first row is
also a fibration. The inclusion $\alpha$ is a homotopy equivalence by
Lemmas~\ref{lem:vertical_isometries},
\ref{lem:diffv}, and~\ref{lem:diffv3cone}.

We claim that the inclusion $\beta$ is also a homotopy equivalence, from
which it follows that the middle vertical map is as well. Each non-identity
element of $\Isom_0(O)$ is nonisotopic to the identity on
$\diff(S^2\setminus\{\text{three points}\})$, so $\beta$ is injective on
path components. Let $f\in \Diff_f(M)$ induce $\overline{f}$ on $O$. By
Proposition~\ref{prop:threearrows}, $f$ is isotopic through
fiber-preserving diffeomorphisms to an isometry, so $\overline{f}$ is
orbifold-isotopic to an isometry of $O$. That is, $\beta$ is surjective on
path components. Finally, the components of $\Diff^{\,\mathrm{orb}}(O)$ are
contractible, and the components of $\Isom_0(O)$ are points, so $\beta$ is
a homotopy equivalence and the proof is complete.
\end{proof}


\begin{thebibliography}{99}

\bibitem[BO]{bo} M. Boileau and J.-P. Otal, Groupe des diff\'{e}otopies de
certaines vari\'{e}t\'{e}s de Seifert, \textit{C. R. Acad. Sci.\ Paris Ser.\ I
Math.} 303 (1986) 19-22.

\bibitem[Cerf]{cerf} J. Cerf, Sur les diff\'eomorphismes de la sph\`ere de
dimension trois $(\Gamma \sb{4}=0)$, Springer-Verlag Lecture Notes in
Mathematics No.\ 53, 1968.

\bibitem[FHS]{fhs} M. Freedman, J. Hass and P. Scott, Least area
incompressible surfaces in $3$-manifolds, \textit{Invent.\ Math.} 71
(1983) 609-642.

\bibitem[Ga1]{ga1} D. Gabai, On the geometric and topological rigidity of
hyperbolic 3-manifolds, \textit{J. Amer.\ Math.\ Soc.} 10 (1997) 37-74.

\bibitem[Ga2]{ga2} D. Gabai, The Smale conjecture for hyperbolic
3-manifolds: $\mathrm{Isom}(M^3)\simeq \mathrm{Diff}(M^3)$,
\textit{J. Diff.\ Geom.} 58 (2001) 113-149.

\bibitem[I1]{I1} N. V. Ivanov, Groups of diffeomorphisms of Waldhausen
manifolds (Russian), in \textit{Studies in topology, II.,} Zap. Nau\v
cn. Sem. Leningrad. Otdel. Mat. Inst. Steklov, (LOMI) 66 (1976), 172--176,
209.

\bibitem[I2]{I2} N. V. Ivanov, Diffeomorphism groups of Waldhausen
manifolds, \textit{J. Soviet Math.} 12 (1979), 115-118.

\bibitem[HS]{hs} J. Hass and P. Scott, The existence of least area surfaces
in $3$-manifolds, \textit{Trans.\ Amer.\ Math.\ Soc.} 310 (1988) 87-114.

\bibitem[Ha1]{ha1} A. Hatcher, Homeomorphisms of sufficiently large
$P^2$-irreducible $3$-manifolds, \textit{Topology} 15 (1976) 343-347.

\bibitem[Ha2]{ha2} A. Hatcher, A proof of the Smale conjecture,
$\mathrm{Diff}(S^3)\simeq \mathrm{O}(4)$, \textit{Ann.\ of Math.} 117
(1983) 553-607.

\bibitem[HKMR]{HKMR} S. Hong, J. Kalliongis, D. McCullough, and
J. H. Rubinstein, \textit{Diffeomorphisms of Elliptic $3$-Manifolds,}
preprint available at
\newline\texttt{http://www.math.ou.edu/$_{\widetilde{\phantom{\ }}}$dmccullough/research/preprints.html}

\bibitem[Ki]{kirby} R. Kirby, \textit{Problems in Low-dimensional
Topology,} available at
\newline\texttt{http://math.berkeley.edu/$_{\widetilde{\phantom{\ }}}$kirby/}

\bibitem[KLR]{klr} R. Kulkarni, K.-B.\ Lee, and F. Raymond, \textit{Deformation
spaces for Seifert manifolds,} in \textit{Geometry and topology (College
Park, Md., 1983/84),} 180--216, Lecture Notes in Math., 1167, Springer,
Berlin, 1985.

\bibitem[NR]{NR} W. Neumann and F. Raymond, Seifert manifolds, plumbing,
$\mu$-invariant and orientation reversing maps, \textit{Algebraic and
geometric topology (Proc. Sympos., Univ. California, Santa Barbara,
Calif., 1977),} pp. 163--196, Lecture Notes in Math., 664, Springer,
Berlin, 1978.

\bibitem[Oh]{oh} K. Ohshika, Teichm\"uller spaces of Seifert fibered manifolds 
with infinite $\pi_1$, \textit{Topology Appl.} 27 (1987) 75-93.

\bibitem[Pa]{pa} R. Palais, Homotopy theory of infinite dimensional
manifolds, \textit{Topology} 5 (1966) 1-16.

\bibitem[Sc1]{sc1} P. Scott, There are no fake Seifert fibre spaces with
infinite $\pi_1$, \textit{Ann.\ of Math.} 117 (1983) 35-70.

\bibitem[Sc2]{sc2} P. Scott, The geometries of $3$-manifolds,
\textit{Bull.\ London Math.\ Soc.} 15 (1983) 401-487.

\bibitem[Sc3]{sc3} P. Scott, Homotopy implies isotopy for some Seifert
fibre spaces, \textit{Topology} 24 (1985) 341-351.

\bibitem[So]{so} T. Soma, Scott's rigidity theorem for Seifert fibered
spaces; revisited, \textit{Trans.\ Amer.\ Math.\ Soc.} 358 (2006)
4057-4070.

\bibitem[Th]{th} W. Thurston, Three-dimensional manifolds, Kleinian groups
and hyperbolic geometry, \textit{Bull.\ Amer.\ Math.\ Soc.} 6 (1982)
357-381.

\bibitem[Wa1]{wa1} F. Waldhausen, Gruppen mit Zentrum und $3$-dimensionale
Mannigfaltigkeiten, \textit{Topology} 6 (1967) 505--517.

\bibitem[Wa]{wa2} F. Waldhausen, On irreducible $3$-manifolds which are
sufficiently large, \textit{Ann.\ of Math.} 87 (1968) 56-88.

\bibitem[ZVC]{ZVC} H. Zieschang, E. Vogt, and H.-D.~Coldewey,
\textit{Surfaces and planar discontinuous groups,}
Lecture Notes in Mathematics, 835. Springer, Berlin,
1980. x+334 pp.

\end{thebibliography}
\end{document}